\def\a{\alpha}
\def\de{\delta}
\def\De{\Delta}
\def\ep{\epsilon}
\def\la{\lambda}
\def\La{\Lambda}
\def\si{\sigma}
\def\om{\omega}
\def\th{\theta}
\newcommand{\Th}{\Theta}
\def\CC{{\cal C}}
\def\DD{{\cal D}}
\def\DD{{\cal D}}
\def\SS{{\cal S}}
\newcommand{\N}[0]{\mathbb{N}}
\newcommand{\R}[0]{\mathbb{R}}
\newcommand{\Z}[0]{\mathbb{Z}}
\newcommand{\T}[0]{\mathbb{T}}
\newcommand{\supp}{\mathrm{supp} \,}
\newcommand{\fr}[2]{\frac{#1}{#2}}
\newcommand{\ALI}[1]{\begin{align*} #1 \end{align*}}
\newcommand{\leqc}[0]{\lesssim}
\newcommand{\pr}[0]{\partial}
\newcommand{\nb}{\nabla}
\DeclareMathAlphabet{\mathpzc}{OT1}{pzc}{m}{it}
\newcommand{\hq}[0]{\hat{q}}
\newcommand{\Pmhq}[0]{P_{(-m,\hat{q}]}}
\newcommand{\ali}[1]{ \begin{align} #1 \end{align} }
\def\Xint#1{\mathchoice
   {\XXint\displaystyle\textstyle{#1}}%
   {\XXint\textstyle\scriptstyle{#1}}%
   {\XXint\scriptstyle\scriptscriptstyle{#1}}%
   {\XXint\scriptscriptstyle\scriptscriptstyle{#1}}%
   \!\int}
\def\XXint#1#2#3{{\setbox0=\hbox{$#1{#2#3}{\int}$}
     \vcenter{\hbox{$#2#3$}}\kern-.5\wd0}}
\def\dashint{\Xint-}
\newcommand{\Fsupp}[0]{{\mathcal F}\mbox{supp}\,}
\newcommand{\mrg}[1]{\mathring{#1}}
\newtheorem{thm}{Theorem}
\newtheorem{lem}{Lemma}[section]
\newtheorem{prop}{Proposition}[section]
\newtheorem{conj}{Conjecture}
\theoremstyle{definition}
\newtheorem{defn}{Definition}[section]
\theoremstyle{remark}
\title{ On the conservation laws and the structure of the nonlinearity for SQG and its generalizations }
\author{  Philip Isett\thanks{Department of Mathematics, Caltech. }, Andrew Ma\thanks{ASUS Robotics and AI Center. }
}
\date{ }
\begin{document}
\maketitle
\begin{abstract}
Using a new definition for the nonlinear term, we prove that all weak solutions to the SQG equation (and mSQG) conserve the angular momentum.  This result is new for the weak solutions of [Resnick, '95] and rules out the possibility of anomalous dissipation of angular momentum.  We also prove conservation of the Hamiltonian under conjecturally optimal assumptions, sharpening a well-known criterion of [Cheskidov-Constantin-Friedlander-Shvydkoy, '08].  Moreover, we show that our new estimate for the nonlinearity is optimal and that it characterizes the mSQG nonlinearity uniquely among active scalar nonlinearities with a scaling symmetry.

\end{abstract}

\tableofcontents

\section{Introduction}

Many important models in fluid dynamics are active scalar equations, which 
in general have the form
\ALI{
\pr_t \th + T^\ell \th \nb_\ell \th &= 0, \qquad \widehat{T^\ell \th}(\xi) = m^\ell(\xi) \hat{\th}(\xi),
}
where the velocity field advecting the scalar is given by a Fourier multiplier applied to the scalar, and the summation convention is used to indicate a sum over repeated indices.  Of primary interest to this paper are the SQG equation ($T^\ell = -\epsilon^{\ell a} \nb_a (-\De)^{-1/2}$), the 2D Euler equation in vorticity form ($T^\ell = \epsilon^{\ell a} \nb_a \De^{-1}$), and the mSQG family, which interpolates between the two $T^\ell = -\ep^{\ell a} \nb_a (-\De)^{-1+ \de}$, $\de \geq 0$.  Here $\ep^{\ell a}$ is the Levi-Civita symbol, which is characterized by $\ep^{j\ell} = -\ep^{\ell j}$, $\ep^{12} = 1$.  This family of equations has been studied from the point of view of existence of weak solutions \cite{resnick1995,marchand2008existence} well-posedness or ill-posedness of the equation and patch dynamics \cite{chae2012generalized,yu2019remarks,cordoba2018uniqueness,cordoba2022non,jeong2023well,zlatos2023local}, constructing invariant measures \cite{nahmod2018global}, constructing global solutions \cite{cordoba2019global,hmidi2023emergence}, constructing blow up of patch solutions \cite{kiselev2016finite}, and for applications in a geophysical context (see \cite{constantin2008global,pierrehumbert1994spectra,schorghofer2000energy,smith2002turbulent} and the references therein).  When we consider more general equations, we will assume that the multiplier $m$ is smooth away from the origin in $\widehat{\R}^n$ unless otherwise stated, and that it has real-symmetry $m(-\xi) = \overline{m(\xi)}$ so that $T^\ell \th$ is real-valued whenever $\th$ is.

Fundamental to studying these equations are the conservation laws that are satisfied by the solutions.  When the velocity field is incompressible, which is equivalent to the multiplier being angular ($m(\xi) \cdot \xi = 0$), sufficiently regular solutions conserve all $L^p$ norms as the measure of the set $\{ x : a \leq \th(x) \leq b \}$ remains invariant under the evolution of the equation for any $a, b \in \R$.  For sufficient conditions for $L^2$ norm conservation for mSQG we refer to \cite{wang2023energySQG}.

Of equal importance is the Hamiltonian of the flow, which exists in 2 dimensions when the multiplier is angular and odd. For the mSQG equations, the Hamiltonian is given up to a constant by
\ali{
H(t) &= \int |\La^{-1+\de} \th(t,x)|^2 dx,
}
where $\La$ is the Fourier multiplier $\La = \sqrt{-\De}$.  (For a more general angular and odd multiplier $m$, the Hamiltonian exists but may not be coercive; see \cite{isettVicol} for a definition.)  Note that the Hamiltonian is well-defined for solutions that are square integrable in time taking values in the Sobolev space $\dot{H}^{-1+\de}$ (i.e. $\th$ is of class $L_t^2 \dot{H}^{-1+\de}$), which, as we shall soon elaborate, is the lowest regularity in which the nonlinearity in the equation is well-defined.

Finally, solutions to the mSQG equations with sufficient regularity and decay conserve the mean, impulse and angular momentum defined by
\ali{
M(t) = \underbrace{\int \th(t,x) dx}_{\textrm{mean}}, \qquad \vec{I}(t) = \underbrace{\int_{\R^2} x \, \th(t,x) dx}_{\textrm{impulse}}, \qquad A(t) = \underbrace{\int_{\R^2} |x|^2 \th(t,x) dx}_{\textrm{angular momentum}} \label{eq:conservationLaws}
}
The first theorem we present concerns the conservation of these last three quantities.  We state a more precise version of this result in Theorem~\ref{thm:angMomentum} below.
\begin{thm} \label{thm:angMoment1} Every weak solution to the mSQG equation ($0 \leq \de \leq 1/2$) (of class $L_t^2 \dot{H}^{-1+\de}(I \times \R^2)$) 
conserves the mean, impulse and angular momentum (defined in a generalized sense).
\end{thm}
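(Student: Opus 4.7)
The plan is to test the weak form of the mSQG equation against time-space test functions of the form $\vp(t)\chi_R(x)Q(x)$, where $\vp\in C_c^\infty(I)$, $\chi_R$ is a smooth cutoff equal to $1$ on $\{|x|\le R\}$ and supported in $\{|x|\le 2R\}$, and $Q(x)\in\{1,\,x_i,\,|x|^2\}$ corresponds to mean, impulse, and angular momentum respectively. The conservation law ``in the generalized sense'' is defined as the limit, as $R\to\infty$, of the corresponding regularized functional $\int \th(t,x)\chi_R(x)Q(x)\,dx$, and part of the proof is to show that this limit exists and is independent of the choice of cutoff sequence. The linear (transport) part of the weak formulation produces $\int \vp'(t)\int\th\chi_R Q\,dx\,dt$, while the nonlinearity, written via the paper's new definition as a distributional divergence $\nab_\ell F^\ell(\th,\th)$, contributes $-\int \vp(t)\int \nab_\ell[\chi_R Q]\,F^\ell(\th,\th)\,dx\,dt$.

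For $Q=1$ only the annular piece $\nab_\ell\chi_R$ survives, and it vanishes as $R\to\infty$ by the $L_t^2\dot H^{-1+\de}$ estimate for $F^\ell$ guaranteed by the new definition. For $Q=x_i$ the principal contribution reduces, after subtracting the annular remainder, to $-\int \vp\int F^i(\th,\th)\,dx\,dt$, which is zero by Plancherel: the mSQG symbol $m^i(\xi)=-i\ep^{ia}\xi_a|\xi|^{-2+2\de}$ is odd and purely imaginary, so $\int m^i(\xi)|\hat\th(\xi)|^2\,d\xi=0$ since $|\hat\th|^2$ is even.

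The angular momentum case ($Q=|x|^2$) is the main difficulty. Here $\nab_\ell Q=2x_\ell$, and the principal term is $-2\int \vp\int\chi_R x_\ell F^\ell(\th,\th)\,dx\,dt$. The structural observation driving the proof is that for mSQG
\ali{
x_\ell T^\ell = -\Om\, (-\De)^{-1+\de},\qquad \Om := x_1\pr_2 - x_2\pr_1,
}
so $x_\ell T^\ell$ is (minus) the rotation generator composed with a pseudodifferential operator that commutes with $\Om$. Setting $u=\La^{-1+\de}\th\in L_{t,x}^2$ and using $[\Om,\La]=0$ together with self-adjointness of $\La^{-1+\de}$, the formal identity reads
\ali{
\int x_\ell T^\ell\th\cdot\th\,dx = -\int \Om(\La^{-1+\de}u)\cdot\La^{1-\de}u\,dx = -\int \Om u\cdot u\,dx = -\tfrac12\int \Om(u^2)\,dx = 0,
}
the final equality by divergence-freeness of $\Om$.

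Making this rigorous amounts to showing that the commutators $[\chi_R,\Om]$ and $[\chi_R,\La^{-1+\de}]$, which live on the annular scale $R$, produce only errors that vanish in $L_{t,x}^1$ as $R\to\infty$. This is the heart of the proof and is where I expect the main technical effort to lie: the argument must operate at the critical regularity $L_t^2\dot H^{-1+\de}$ where the nonlinearity is just barely defined, so naive product estimates are unavailable and one must instead use the specific structure of the new symmetric definition of $F^\ell(\th,\th)$ together with the scaling of $\La^{-1+\de}$ on $L^2$ to control these commutator remainders. Once this quantitative rotation-invariance is established, the three conservation laws follow by sending $R\to\infty$ in the regularized weak identity, and the independence-of-cutoff statement implicit in the ``generalized sense'' definition follows from the same commutator bounds applied to the difference of two cutoffs.
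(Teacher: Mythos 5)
There is a genuine gap, and it occurs at exactly the point you flag as ``the heart of the proof'' and then defer. Your argument treats $F^\ell(\th,\th)=\th\, T^\ell\th$ as a well-defined (indeed integrable) function that can be multiplied by $x_\ell$ or $\nb_\ell\chi_R$ and integrated. At the assumed regularity $\th\in L_t^2\dot{H}^{-1+\de}$ this product does not exist: the entire point of the paper's definition of the nonlinearity is that $\th\,T^\ell\th$ only makes sense after a \emph{second} antidifferentiation, as a double divergence paired against two derivatives of the test function. Concretely, your Plancherel step for the impulse, $\int m^i(\xi)|\hat\th(\xi)|^2\,d\xi=0$, requires $|\xi|^{-1+2\de}|\hat\th|^2\in L^1$, i.e.\ $\th\in\dot{H}^{-1/2+\de}$, which is strictly stronger than the hypothesis; and the formal identity $\int x_\ell T^\ell\th\cdot\th\,dx=-\tfrac12\int\Om(u^2)\,dx$ manipulates a pairing that is not absolutely convergent. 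Moreover, the commutator errors you would need to control cannot be handled by the operator bound of Theorem~\ref{thm:mSQGbound} alone: the paper explicitly notes that $\|\mrg{\nb}^2(\chi_R Q)\|_{L^\infty}=O(1)$ (the cross terms $\nb\chi_R\cdot\nb Q$ do not decay), so the strategy of applying the sharp estimate to a cut-off version of $|x|^2$ fails, and your annular remainders face the same obstruction.

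The paper's actual mechanism is different and is what closes the argument. Theorem~\ref{thm:divForm} upgrades the nonlinearity to a double divergence $\nb_\ell[\th T^\ell\th]=\nb_j\nb_\ell\widetilde{T}^{j\ell}[\th,\th]$ where $\widetilde{T}$ is symmetric, \emph{trace-free}, and bounded from $\dot{H}^{-1+\de}\times\dot{H}^{-1+\de}$ into $L^1$ (the trace-free property comes from the computation \eqref{eq:itsTraceFree} on the Jacobian of the mSQG multiplier --- this is the rigorous counterpart of your rotation-generator observation $x_\ell T^\ell=\Om(-\De)^{-1+\de}$). Then Lemma~\ref{lem:conserveLem} finishes: $\widetilde{T}^{j\ell}\nb_j\nb_\ell Q=0$ pointwise because $\nb_j\nb_\ell Q$ is pure trace for $Q\in\{1,x_i,|x|^2\}$, while the terms where derivatives fall on $\chi_R$ are dominated pointwise by the fixed $L^1$ function $|\widetilde{T}|$ supported on $|x|\sim R$, so they vanish by dominated convergence. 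It is the existence of this $L^1$ majorant --- not an operator norm bound and not a commutator estimate --- that makes the limit $R\to\infty$ computable, and this representation is also what defines the time slices $\th(t_0)$ and hence the generalized conserved quantities in the first place. Without it, your outline cannot be completed as written.
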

 There appears to be no precedent for this type of criterion for angular momentum conservation in the literature, and the proof is quite different from the better-studied conservation of energy.  The result is new for the SQG equation, and in particular applies to the weak solutions constructed by Resnick \cite{resnick1995} and Marchand \cite{marchand2008existence}.  

Since $L_t^2 \dot{H}^{-1+\de}$ is the lowest regularity in which the nonlinearity is defined (as we will elaborate later), this result implies that there are no weak solutions that violate conservation of the mean, impulse or angular momentum among any class that can presently be defined.  Thus, in contrast to the situation for the Hamiltonian, anomalous dissipation of angular momentum is not possible. 

Our next main theorem concerns the optimal regularity under which one can show the Hamiltonian is conserved.  We refer to Section~\ref{sec:basicfunctionSpaces} below for the definition of the Besov space.
\begin{thm}[Onsager singularity theorem] \label{thm:OnsagSingularity} Let $0 \leq \de < 1/2$ and let $\th$ be a weak solution to mSQG of class $L_t^3 \dot{H}^{-1+\de}$ with $\La^{-1+\de} \th$ of class $L_t^3 \dot{B}_{3,c(\N)}^{(1+\de)/3}$ on either $\T^2$ or on $\R^2$.  Then the Hamiltonian $H(t)$ is almost everywhere constant in time.  In particular, $\th \in L_t^\infty \dot{H}^{-1+\de}$.  Furthermore, the same conservation law holds true in the case of SQG ($\de = 1/2$) if we assume $\th \in L_{t,x}^3 \cap L_t^3 \dot{H}^{-1/2}$.
\end{thm}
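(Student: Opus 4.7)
The strategy is to follow the Littlewood--Paley flux argument of Cheskidov--Constantin--Friedlander--Shvydkoy, adapted to the mSQG Hamiltonian by working with $\psi := \La^{-1+\de}\th$ (so that $H(t) = \|\psi(t)\|_{L^2}^2$) and exploiting the pointwise orthogonality $\nb\bar\psi \cdot u \equiv 0$ arising from the stream-function identity $u = -\nb^\perp\bar\psi$, where $\bar\psi := \La^{-2+2\de}\th$. Applying $\La^{-1+\de}P_{\leq q}$ to the equation $\pr_t \th + u\cdot\nb\th = 0$ and pairing $\pr_t\psi_{\leq q}$ with $\psi_{\leq q}$ yields
\[
\fr{d}{dt}\fr{1}{2}\|\psi_{\leq q}(t)\|_{L^2}^2 \;=\; -\int \bar\psi_{\leq q}(u\cdot\nb\th)\,dx \;=\; \int \nb\bar\psi_{\leq q}\cdot r_q(u,\th)\,dx \;=:\; \Pi_q(t),
\]
where $r_q(u,\th) := (u\th)_{\leq q} - u_{\leq q}\th_{\leq q}$, using integration by parts and the exact pointwise identity $\nb\bar\psi_{\leq q}\cdot u_{\leq q}\equiv 0$ (which holds because $u_{\leq q}=-\nb^\perp\bar\psi_{\leq q}$ and $\nb\varphi\cdot\nb^\perp\varphi\equiv 0$).

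The heart of the proof is bounding $\Pi_q$ via Bony's paraproduct decomposition of $u\th$ together with a Constantin--E--Titi-type identity for $r_q$. The three factors in the trilinear flux are zeroth-order Fourier multipliers of $\psi$ of $L^3$-Besov regularities $(1-2\de)/3$, $(1-2\de)/3$, $(4\de-2)/3$ for $\nb\bar\psi_{\leq q}$, $u$, $\th$ respectively, whose exponents sum to zero; this is the critical scaling underlying the Besov exponent $(1+\de)/3$. On the dyadic diagonal $p\sim q$ the trilinear scales as $d_q(t)^3$ with
\[
d_q(t) \;:=\; \sup_{p\geq q - O(1)} 2^{p(1+\de)/3}\|\Delta_p\psi(t)\|_{L^3},
\]
and the paraproduct-boundary contributions and off-diagonal sums admit analogous estimates. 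Collecting everything yields a schematic bound $|\Pi_q(t)| \lesssim d_q(t)\,\|\psi(t)\|_{\dot B_{3,\infty}^{(1+\de)/3}}^2$, which is pointwise dominated by $\|\psi(t)\|_{\dot B_{3,\infty}^{(1+\de)/3}}^3\in L^1_t$ (by the $L^3_t$ hypothesis), while $d_q(t)\to 0$ for a.e.\ $t$ by the $c(\N)$ hypothesis. Pairing the flux identity with a test function $\phi\in C_c^\infty(I)$ and sending $q\to\infty$---Plancherel on the left, dominated convergence on the right---gives $\fr{d}{dt}H = 0$ distributionally, hence $H$ is a.e.\ constant and in particular $\th\in L^\infty_t\dot H^{-1+\de}$. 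For the SQG endpoint $\de=1/2$, the exponent $(1+\de)/3 = 1/2$ is critical for $\psi$, and the additional hypothesis $\th\in L^3_{t,x}$ supplies the $L^3$ control on $\th$ that replaces the (now vanishing-exponent) Besov bound; the rest of the argument is unchanged.

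The main obstacle I expect is the paraproduct bookkeeping. For $\de<1/2$ the scalar $\th$ has \emph{negative} Besov regularity $(4\de-2)/3$, so the individual dyadic blocks $\|\Delta_p\th\|_{L^3}$ \emph{grow} with $p$ and the high-high remainder of $r_q$ cannot be bounded term by term. One must exploit the stream-function orthogonality $\nb\bar\psi\cdot u\equiv 0$ beyond its pointwise use---either by an integration by parts that redistributes the derivative from $\th$ onto $\bar\psi_{\leq q}$, or by a Fourier-support cancellation in the low-frequency projection of the high-high Bony remainder $P_{\leq q}R(u,\th)$. Verifying that this cancellation is strong enough to close the estimate uniformly across $0\leq\de\leq 1/2$ at the endpoint Besov exponent $(1+\de)/3$ is the technical core of the argument.
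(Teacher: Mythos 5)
You have the right framework and you have correctly located the difficulty, but the proposal stops exactly where the proof has to begin. The asserted flux bound $|\Pi_q(t)|\lesssim d_q(t)\,\|\psi(t)\|_{\dot B^{(1+\de)/3}_{3,\infty}}^2$ \emph{is} the theorem for $0<\de<1/2$, and, as you yourself observe, it cannot be obtained by the CCFS/CET term-by-term paraproduct estimate: since $\th=\La^{1-\de}\psi$ has negative $L^3$-Besov regularity $(4\de-2)/3$, the high-high contributions to $r_q(u,\th)$ grow block by block, and the pointwise orthogonality $u\cdot\nb\bar\psi\equiv 0$, used only to subtract the low-frequency product, does not touch them. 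What closes this in the paper is a quantitative cancellation inside the high-high interaction itself: for the symmetric dyadic pairing $Q_q^\ell=P_{q+1}\th\,T^\ell_{>\hq}P_q\th+P_q\th\,T^\ell_{>\hq}P_{q+1}\th$ one constructs an anti-divergence $Q_q^\ell=\nb_j B_q^{j\ell}[P_{\approx q}\th,P_{\approx q}\th]$, where $B_q^{j\ell}$ is a bi-convolution against a kernel $\widetilde K_q$ with $\|\widetilde K_q\|_{L^1(\R^2\times\R^2)}\lesssim 2^{2(-1+\de)q}$. This comes from Taylor-expanding the odd, homogeneous multiplier in frequency space, $m^\ell(\xi-\eta)+m^\ell(\eta)=m^\ell(\xi-\eta)-m^\ell(-\eta)=i\xi_j\int_0^1\bigl(-i\nb^j m^\ell\bigr)(\si(\xi-\eta)-(1-\si)\eta)\,d\si$, and the extra divergence is then integrated by parts onto the test function, so the flux is controlled by $\|\nb^2\psi_{\hq}\|_{L^3_x}\cdot 2^{2(-1+\de)q}\|P_{\approx q}\th\|_{L^3_x}^2\lesssim o\bigl(2^{(1+\de-\a)\hq}\bigr)\cdot 2^{-2\a q}\|\La^{-1+\de}\th\|_{\dot B^\a_{3,\infty}}^2$, which sums over $q\geq\hq-2$ and vanishes as $\hq\to\infty$ precisely at $\a=(1+\de)/3$. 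Without this double-divergence structure (or an equivalent Calder\'on-type commutator estimate), the schematic bound on $\Pi_q$ is unsubstantiated exactly in the range $0<\de<1/2$ that the theorem addresses; your ``main obstacle'' paragraph is an accurate description of the missing core of the proof, not a remark about bookkeeping.

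A second, smaller gap is the derivation of the flux identity itself. The solution is only a weak solution in $L^3_t\dot H^{-1+\de}$, so the nonlinearity exists only as the extended form $B(\cdot,\th,\th)$ on $\dot W^{2,\infty}\times\dot H^{-1+\de}\times\dot H^{-1+\de}$, and the intended multiplier $\eta(t)P_{\leq\hq}^2\La^{2(-1+\de)}\th$ is neither smooth in time nor, on $\R^2$, compactly supported in space. Justifying its use requires mollifying in time, cutting off in space, and passing to the limit in $L^3_t\dot W^{2,\infty}$ using the boundedness and weak-$*$ continuity of the extended form; this ``test function step'' is what allows the hypothesis to be $L^3_t\dot H^{-1+\de}$ rather than $L^\infty_t\dot H^{-1+\de}$, and it should not be treated as an immediate consequence of the weak formulation.
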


The problem of establishing criteria for energy conservation and other conservation laws under optimal hypotheses has attracted a large amount of attention since the basic results of \cite{eyink, CET}.  The literature is now too vast to survey here, but we note the results \cite{duchonRobert,energyCons, IOheat, bardos2018onsager,feireisl2017regularity,drivas2018onsager,drivas2019onsager,wang2023energy,deR2023dissipation}, which pertain specifically to the Euler equations.  The paper \cite{energyCons} by Cheskidov, Constantin, Friedlander and Shvydkoy establishes energy conservation in the incompressible Euler equations for velocity fields of class $L_t^\infty L_x^2 \cap L_t^3 B_{3,c(\N)}^{1/3}$.  Our theorem expresses a slightly sharper result, as we assume only $L_t^3 L_x^2 \cap L_t^3 \dot{B}_{3,c(\N)}^{1/3}$ control of the velocity field (which satisfies $\| v\|_{L^2} \approx \| \La^{-1} \th \|_{L^2}$), while the $L_t^\infty L_x^2$ regularity of the velocity field is a conclusion that follows from the conservation of energy.  This improvement is obtained by a more careful study of what we call the ``test function step'' in the proof.

Our results are conjecturally sharp and our approach is novel in that Theorem~\ref{thm:OnsagSingularity} is the first proof of energy conservation at the critical regularity that proceeds at the level of the vorticity equation.  In this way, we obtain a proof of energy conservation that unifies both the 2D Euler and SQG equations.  Previously, Hamiltonian conservation for SQG solutions in $L_{t,x}^3$ in a periodic domain was proven in \cite{isettVicol}.  By the recent solution of the 2D Onsager conjecture in \cite{giri20232d}, the regularity exponent $1/3$ in the 2D Euler result is sharp, and by the generalized Onsager conjecture (see e.g. \cite{klainerman2017nash,buckmaster2020convex}), the assumption on the regularity is expected by analogy to be sharp for the full range $0 \leq \de \leq 1/2$.  The best known results on constructing solutions to SQG that violate Hamiltonian conservation are obtained in \cite{buckShkVicSQG,isett2021direct}; see also \cite{ma2022some} for the mSQG generalization.  

In forthcoming work by the first author and S. Looi, which builds on the methods of the present paper, the full Onsager conjecture for Hamiltonian conservation in the SQG equation will be solved \cite{isettLooi}.  This proof constructs solutions to SQG that fail to conserve the Hamiltonian with any regularity $(-\Delta)^{-1/4} \th \in C_t C_x^{1/2 - \ep}$, and in particular uses the double divergence form of the nonlinearity first obtained in the present work.  We also remark that Theorem~\ref{thm:OnsagSingularity} and its proof extend in a straightforward way to prove Hamiltonian conservation under the same assumptions for any active scalar equation whose multiplier $m$ is odd, angular and degree $-1 + 2 \de$ homogeneous, $0 < \de \leq 1/2$. 


	
	Theorem~\ref{thm:OnsagSingularity} is the first to prove Hamiltonian conservation for mSQG in the range $0 < \de < 1/2$, which encounters new analytical difficulties.  Namely, although there is a simple proof of energy conservation available for Euler ($\de = 0$) and also a short, rather different proof of Hamiltonian conservation for SQG ($\de = 1/2$), such a direct approach for the mSQG family ($0 < \de < 1/2$) does not seem to be available.  Part of the difficulty is to take advantage of the special cancellations in the nonlinearity that allow the nonlinear term to be defined when the solution has negative regularity.  
	
	In fact, when the multiplier is odd and homogeneous, the nonlinearity satisfies the following bound.
\begin{thm} \label{thm:oddBound}  Let $m$ be odd and degree $-1 + 2\de$ homogeneous, $\de > 0$.  Then there is a constant depending on $m$ such that 
\ali{
\left| \int \phi(x) \nb_\ell [T^\ell[\th] \th] dx \right| &\lesssim_m \| \nb^2 \phi \|_{L^\infty} \| \th \|_{\dot{H}^{-1+\de}}^2 \label{eq:oddMultBound}
}
for all $\phi \in \SS(\R^d)$ in the Schwartz class and for all
\ALI{
\th \in \SS_0(\R^d) \coloneqq \left\{ f \in \SS(\R^d) ~:~ \supp \hat{f} \mbox{ is a compact subset of } \widehat{\R^d} \setminus \{0 \} \right \}
}
The same result holds also on $\T^d$, where we consider $\phi$ and $\th \in C^\infty(\T^d)$.
\end{thm}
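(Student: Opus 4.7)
The strategy is to use the oddness of $m^\ell$ to recast the nonlinearity in \emph{double divergence} form, $\nabla_\ell[T^\ell\theta\cdot\theta] = \tfrac{1}{2}\,\partial_k\partial_\ell U^{k\ell}(\theta,\theta)$, after which \eqref{eq:oddMultBound} reduces, via integration by parts, to the bilinear $L^1$ estimate $\|U^{k\ell}\|_{L^1}\lesssim_m\|\theta\|_{\dot H^{-1+\delta}}^2$.

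To extract the double divergence, pass to Fourier. Parseval together with symmetrization in the two input frequencies of $\theta$ gives
\[
\int\phi\,\nabla_\ell[T^\ell\theta\cdot\theta]\,dx \;=\; \frac{i}{2(2\pi)^{2n}}\iint (\xi+\zeta)_\ell\bigl[m^\ell(\xi)+m^\ell(\zeta)\bigr]\hat\theta(\xi)\hat\theta(\zeta)\overline{\hat\phi(\xi+\zeta)}\,d\xi\,d\zeta.
\]
The oddness of $m^\ell$ and the fundamental theorem of calculus along the segment from $-\zeta$ to $\xi$ supply the crucial second factor of $\xi+\zeta$:
\[
m^\ell(\xi)+m^\ell(\zeta) \;=\; m^\ell(\xi) - m^\ell(-\zeta) \;=\; (\xi+\zeta)_k\int_0^1\partial_k m^\ell\bigl(t\xi-(1-t)\zeta\bigr)\,dt.
\]
Together with $(\xi+\zeta)_\ell$ from $\nabla_\ell$, the integrand now features $(\xi+\zeta)_k(\xi+\zeta)_\ell\overline{\hat\phi(\xi+\zeta)} = -\overline{\widehat{\partial_k\partial_\ell\phi}(\xi+\zeta)}$. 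Inverting the Fourier transform (using $\widehat{z_kK^\ell}(\xi) = i\partial_k m^\ell(\xi)$, where $K^\ell = \mathcal F^{-1}(m^\ell)$ is the odd convolution kernel of $T^\ell$) identifies
\[
U^{k\ell}(\theta,\theta)(x) \;=\; \int_0^1\!\!\int_{\R^n} z_k\,K^\ell(z)\,\theta(x+tz)\,\theta\bigl(x-(1-t)z\bigr)\,dz\,dt,
\]
with $z_kK^\ell(z)$ an even kernel of Riesz type satisfying $|z_kK^\ell(z)|\lesssim|z|^{-n+(2-2\delta)}$ (since $K^\ell$ has degree $-(n-1)-2\delta$).

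The main obstacle is the bilinear $L^1$ estimate on $U^{k\ell}$. The naive route of bringing absolute values inside and applying Plancherel to the autocorrelation $(|\theta|\ast|\check\theta|)(z)$ only yields $\||\theta|\|_{\dot H^{-1+\delta}}^2$, which is strictly larger than $\|\theta\|_{\dot H^{-1+\delta}}^2$ whenever $\theta$ has oscillations cancelling in the weak Sobolev norm (e.g.\ $\theta = \cos(Nx)\chi(x)$ as $N\to\infty$). The sharp bound therefore requires retaining the signs of $\theta$ and exploiting the full bilinear structure. A direct route substitutes $\theta = \La^{1-\delta}\tilde\theta$ so that $\|\tilde\theta\|_{L^2} = \|\theta\|_{\dot H^{-1+\delta}}$, turning the problem into an $L^2\times L^2\to L^1$ bound for the bilinear Fourier multiplier with symbol $|\alpha|^{1-\delta}|\beta|^{1-\delta}\int_0^1\partial_k m^\ell(t\alpha-(1-t)\beta)\,dt$, which is scaling-invariant (of homogeneity zero) and smooth away from the aligned set $\{\alpha = c\beta,\,c\geq 0\}$. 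A Littlewood--Paley decomposition separating high-high from low-high frequency pairs, combined with Hardy--Littlewood--Sobolev to absorb the Riesz-kernel contribution near the aligned set, closes the estimate. The torus case is identical with Fourier integrals replaced by sums over the dual lattice.
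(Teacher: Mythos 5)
Your first step --- symmetrizing the two copies of $\hat\theta$, using oddness to write $m^\ell(\xi)+m^\ell(\zeta)=m^\ell(\xi)-m^\ell(-\zeta)$, and Taylor-expanding along the segment from $-\zeta$ to $\xi$ to extract a second factor of $(\xi+\zeta)$ --- is exactly the paper's key tool (its Section~\ref{sec:deriveBilinear}). The gap is in how you propose to close the bilinear $L^1$ estimate. The paper does \emph{not} apply the divergence form globally: it first performs a paraproduct decomposition $B=LL+HH+HL+LH$ and uses the Taylor trick \emph{only} on the high-high block, where both input frequencies lie in a common annulus $|\xi|\sim 2^q$ and, crucially, the annular cutoff is inserted into the multiplier \emph{before} differentiating ($m_q=m\,\hat\chi_q$, so $\nabla m_q$ is supported in $|\xi|\sim 2^q$ and bounded by $2^{(-2+2\delta)q}$ there). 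This makes the block kernel an honest rescaled Schwartz function with $\|K_q\|_{L^1}\lesssim 2^{2(-1+\delta)q}$, and both of the difficulties you would otherwise face disappear: the segment $u_\sigma=\sigma\xi-(1-\sigma)\zeta$ never probes the singularity of $\nabla m$ at the frequency origin, and the kernel $z_kK^\ell(z)\sim|z|^{-n+2-2\delta}$, which in your global formula is not even integrable at infinity, is replaced by an $L^1$ kernel.

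Your proposed substitute --- a global bilinear multiplier bound with symbol $|\alpha|^{1-\delta}|\beta|^{1-\delta}\int_0^1\partial_km^\ell(t\alpha-(1-t)\beta)\,dt$, closed by ``Littlewood--Paley plus HLS near the aligned set'' --- misidentifies where the danger lies. The aligned set $\{\alpha=c\beta,\ c\ge 0\}$ is where the $\sigma$-integral diverges for $\delta\le 1/2$, but that is a measure-zero resonance that the annular cutoffs neutralize. The regime your sketch does not address is the low-high interaction $|\alpha|\ll|\beta|$, where the segment passes through frequencies of size $|\alpha|$ and the symbol degrades to $\sim 2^{(-1+2\delta)j}2^{-k}$ (with $2^j=|\alpha|$, $2^k=|\beta|$) rather than the ``expected'' $2^{(-2+2\delta)k}$, precisely because $\nabla m$ is singular at the origin. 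The paper handles $HL$ and $LH$ by a completely different, more elementary argument: no divergence form at all, just Bernstein bounds $\|P_k\nabla\psi\|_{L^\infty}\lesssim 2^{-k}\|\nabla^2\psi\|_{L^\infty}$ and a Schur test over the pair of frequencies $(j,k)$, which produces the off-diagonal decay $2^{-\delta|j-k|}$. To make your route rigorous you would in effect have to reproduce this block-by-block analysis; as written, the reduction to a single symbol estimate and the appeal to HLS do not constitute a proof of the $L^1$ bound on $U^{k\ell}$, and this bound is the entire content of the theorem.
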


Theorem~\ref{thm:oddBound} gives a new way of defining the nonlinearity in the active scalar equation for solutions of class $\th \in L_t^2 \dot{H}^{-1+\de}$ (see equation~\eqref{eq:defnEqn} below).  
This ability to formulate the nonlinearity in a weak function space is crucial to the proof of existence of weak solutions \cite{resnick1995,marchand2008existence} to the SQG equation by compactness methods.  
Note that by scaling considerations (counting derivatives), if we restrict to $\th \in \dot{H}^{-1+ \de}$, we cannot expect to use fewer than $2$ derivatives in $L^\infty$ for the norm on the test function $\phi$.  The method of proof also applies to a large class of more general nonlinearities with similar structure (see e.g. Section~\ref{sec:remkGen}).

The standard approach to approaching an estimate such as in Theorem~\ref{thm:oddBound} would be based on the theory of Calder\'{o}n commutators.  That is, because the operator $T$ is anti-symmetric, we can rewrite the nonlinearity in a commutator form
\ALI{
\int \nb_\ell \phi T^\ell \th ~\th dx &= -\int [T^\ell, \nb_\ell \phi] \th dx - \int \nb_\ell \phi \th T^\ell \th dx \\
\int \nb_\ell \phi T^\ell \th ~\th dx &= -\fr{1}{2} \int [T^\ell, \nb_\ell \phi] \th dx
}
To prove Theorem~\ref{thm:oddBound} one then must show that the commutator $[T^\ell, \nb_\ell \phi]$ has one degree of $L^2$ smoothing compared to $T^\ell$, with a constant depending on the Lipschitz norm of $\nb \phi$.  Although the previous definitions of weak solution to SQG require control of at least three derivatives of $\phi$ in their definition (see \cite{marchand2008existence, cheng2021non}), we do believe that the machinery of Calder\'{o}n commutators (e.g. the $1D$ analysis in \cite[Chapter 4]{muscalu138classical}) can be generalized to prove the bound in \eqref{eq:oddMultBound}, at least in the case of $\R^d$, but we could not find a reference for this bound.  

Our definition of the nonlinearity, however, is independent of the commutator structure, and our method of estimating it extends easily to the periodic setting and to any degree of homogeneity for the multiplier.  
Moreover, we obtain the following improved estimate in the special case of mSQG.
\begin{thm}[mSQG bound] \label{thm:mSQGbound} For any $0 \leq \de < \infty$ and $T^\ell = \ep^{\ell a} \nb_a (-\De)^{-1+\de}$, one has the bound
\ali{
\left| \int \phi(x) T^\ell \th ~ \nb_\ell \th dx \right| &\lesssim_\de \| \mrg{\nb}^2 \phi \|_{L^\infty} \| \th \|_{\dot{H}^{-1+\de}}^2 \label{eq:specialmSQGBound}
}
for all $\phi \in \SS(\R^2)$, $\th \in \SS_0(\R^2)$, where $\mrg{\nb}^2_{j\ell} \phi {=} \nb_j \nb_\ell \phi {-} \fr{1}{2} \De \phi \de_{j\ell}$ is the trace-free part of the Hessian of $\phi$.
\end{thm}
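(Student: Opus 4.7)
The plan is to establish a \emph{double divergence} identity
\begin{equation*}
T^\ell \theta \, \nabla_\ell \theta \;=\; \partial_i \partial_j S^{ij}[\theta, \theta],
\end{equation*}
in which the bilinear tensor $S^{ij}$ is symmetric and \emph{trace-free} ($\delta_{ij} S^{ij} = 0$). With this in hand, two integrations by parts combined with trace-freeness give
\begin{equation*}
\int \phi \, T^\ell\theta \, \nabla_\ell\theta \, dx \;=\; \int \partial_i\partial_j \phi \cdot S^{ij}\, dx \;=\; \int \mathring{\nabla}^2_{ij}\phi \cdot S^{ij}\, dx,
\end{equation*}
so by H\"older's inequality the target bound \eqref{eq:specialmSQGBound} reduces to the $L^1$ estimate $\|S\|_{L^1} \lesssim_\delta \|\theta\|^2_{\dot{H}^{-1+\delta}}$.

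To derive the structure I work on the Fourier side. Symmetrizing the bilinear form in $\theta$ as in the proof of Theorem~\ref{thm:oddBound}, the symbol of $\widehat{T^\ell\theta\nabla_\ell\theta}(\zeta)$ (with $\eta := \zeta-\xi$) is, up to a constant, $K(\xi,\eta) = (\xi\wedge\eta)\bigl[|\xi|^{-2+2\delta} - |\eta|^{-2+2\delta}\bigr]$. Using the 2D identities $\xi\wedge\eta = \tfrac{1}{2}(\xi-\eta)\wedge(\xi+\eta)$ and $|\xi|^2-|\eta|^2 = (\xi-\eta)\cdot(\xi+\eta)$, I factor
\begin{equation*}
K(\xi,\eta) \;=\; \tfrac{1}{2}\, G(\xi,\eta)\,(w\wedge\zeta)(w\cdot\zeta), \qquad w := \xi-\eta,
\end{equation*}
where $G(\xi,\eta) := (|\xi|^{-2+2\delta} - |\eta|^{-2+2\delta})/(|\xi|^2 - |\eta|^2)$ is smooth away from $\xi=\eta=0$. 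A direct 2D computation gives $(w\wedge\zeta)(w\cdot\zeta) = \zeta_i \zeta_j \tilde{A}^{ij}(w)$ with $\tilde{A}^{ij}(w) = -\tfrac{1}{2}\bigl(w^i (w^\perp)^j + w^j(w^\perp)^i\bigr)$; this $\tilde{A}^{ij}$ is symmetric and trace-free because $w\cdot w^\perp = 0$. Transferring the factor $\zeta_i \zeta_j$ back to real space as $-\partial_i\partial_j$ furnishes the claimed double divergence identity, with $\hat{S}^{ij}(\zeta)$ defined (up to a constant) by the bilinear symbol $G(\xi,\eta)\,\tilde{A}^{ij}(\xi-\eta)$, trace-free in $(i,j)$.

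For the bilinear estimate I would use the representation $G(\xi,\eta) = -(1-\delta)\int_0^1 \bigl((1-t)|\eta|^2 + t|\xi|^2\bigr)^{-2+\delta} dt$ together with the Gamma identity $s^{-2+\delta} = \Gamma(2-\delta)^{-1}\int_0^\infty r^{1-\delta} e^{-rs}\,dr$ to decompose $G$ as a superposition of products $e^{-rt|\xi|^2}\cdot e^{-r(1-t)|\eta|^2}$. Since $\tilde{A}^{ij}(\xi-\eta)$ is quadratic in its argument, it expands as $\tilde{A}^{ij}(\xi) + \tilde{A}^{ij}(\eta) - 2c^{ijk\ell}\xi_k\eta_\ell$, each term factoring separately in $\xi$ and $\eta$. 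Combined, these decompositions express $S^{ij}[\theta,\theta]$ as a weighted integral over $(r,t)$ of finitely many bilinear products $(M\theta)(N\theta)$ for linear Fourier multipliers $M, N$ built from heat semigroups and polynomial prefactors. Cauchy-Schwarz in $L^1 = L^2 \cdot L^2$ and Plancherel then bound $\|S\|_{L^1}$ by an integral of $\|M\theta\|_{L^2}\|N\theta\|_{L^2}$; the overall symbol homogeneity $-2+2\delta$ is exactly such that, after absorbing Sobolev weights, the remaining $(r,t)$-integration converges and is controlled by $\|\theta\|^2_{\dot{H}^{-1+\delta}}$.

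The main obstacle is this last bilinear estimate: because $G \tilde{A}$ does not itself factor as a product of one-variable symbols, a decomposition (either the heat-kernel approach above or a Littlewood-Paley paraproduct split into high-high and high-low interactions) is required. By contrast, the algebraic derivation of the trace-free double divergence structure -- the new content of the theorem -- is short; the scaling of $G\tilde{A}$ exactly matches $\dot{H}^{-1+\delta}\times \dot{H}^{-1+\delta}\to L^1$, so either analytic route is feasible.
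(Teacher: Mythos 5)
Your algebraic derivation of the trace-free double-divergence structure is correct and is genuinely different in mechanism from the paper's: Section~\ref{sec:deriveBilinear} obtains trace-freeness from a first-order Taylor expansion of the multiplier in frequency together with the observation \eqref{eq:itsTraceFree} that the Jacobian $\nb^j m^\ell$ of the mSQG multiplier is pointwise trace-free, whereas you get an exact factorization from the 2D identities $\xi\wedge\eta=\tfrac12(\xi-\eta)\wedge(\xi+\eta)$ and $|\xi|^2-|\eta|^2=(\xi-\eta)\cdot(\xi+\eta)$. That part of the argument is sound.

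The gap is in the reduction to ``$\|S\|_{L^1}\lesssim_\de\|\th\|_{\dot{H}^{-1+\de}}^2$'': for the specific $S^{ij}$ your factorization produces, this $L^1$ bound is \emph{false}, because of the high--low frequency interactions. Take $|\xi|\sim 2^k\gg|\eta|\sim 2^j$ and $\de<1$. Then $G(\xi,\eta)\approx -|\eta|^{-2+2\de}/|\xi|^2$ while $|\tilde{A}^{ij}(\xi-\eta)|\approx|\xi|^2$, so the tensor symbol of $S^{ij}$ has size $\approx|\eta|^{-2+2\de}$, whereas the bound $\|S[P_k\th,P_j\th]\|_{L^1}\lesssim\|\La^{-1+\de}P_k\th\|_{L^2}\|\La^{-1+\de}P_j\th\|_{L^2}$ would require size $\lesssim|\xi|^{-1+\de}|\eta|^{-1+\de}$; the ratio is $(|\xi|/|\eta|)^{1-\de}\to\infty$ (a similar failure occurs for $\de>1$; $\de=1$ is the trivial case). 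Concretely, if $\hat{\th}$ has mass near $|\xi|\sim 1$ and near $|\xi|\sim 2^{-M}$, then $\|S\|_{L^1}\gtrsim 2^{(1-\de)M}\|\th\|_{\dot{H}^{-1+\de}}^2$. Joint homogeneity $-2+2\de$ of $G\tilde{A}$ is necessary but not sufficient: it says nothing about how the singularity is distributed between $\xi$ and $\eta$. The cancellation that saves the theorem lives in the contraction $\zeta_i\zeta_j\tilde{A}^{ij}(w)=(w\wedge\zeta)(w\cdot\zeta)$, where $w\wedge\zeta=2\,\xi\wedge\eta=O(|\xi||\eta|)\ll|w||\zeta|$ in the high--low regime; this gain is present against the particular tensor $\zeta_i\zeta_j$ but is lost once you estimate $S^{ij}$ componentwise in $L^1$ and pair with a generic $\mrg{\nb}^2\phi$ by H\"older. (Relatedly, even on the comparable-frequency part, your heat-kernel route gives a $\int_0^\infty dr/r$-type integral at the exactly critical homogeneity if each factor is estimated by the sup of its symbol, so Littlewood--Paley orthogonality is unavoidable there too.) The repair is what Section~\ref{sec:mSQGSpecial} does: keep the trace-free kernel only for the high-high interactions, where the bound \eqref{eq:KqBq:bd} does hold, and for the high--low part use a \emph{different} anti-double-divergence --- the first anti-divergence $\De^{-1}\nb^j P_k$ is taken at the high output frequency and then the trace-free right inverse ${\cal R}^{j\ell}$ of the double divergence is applied --- which differs from your $S^{ij}$ by a double-divergence-free tensor and restores the off-diagonal gain $2^{-\de|j-k|}$ needed for Schur's test.
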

This theorem is easy to prove in the case of $2D$ Euler (write the nonlinearity as $\mbox{div } \mbox{div } v \otimes v^\perp$, $v^\ell = T^\ell \th$), but for $\de > 0$ (in particular for SQG) it apparently requires a more involved analysis.

We show in the following theorem that the norm $\| \mrg{\nb}^2 \phi \|_{L^\infty}$ cannot be replaced by any weaker norm.
\begin{thm} \label{thm:sharpXBd}  Let $0 \leq \de < \infty$.  
If there exists a bound for the mSQG nonlinearity of the form
\ALI{
\left| \int \phi(x) \ep^{\ell a} \nb_a (-\De)^{-1+\de} \th \nb_\ell \th dx \right| &\lesssim_\de \| \phi \|_{X} \| \th \|_{\dot{H}^{-1+\de}}^2 
}
for all $\phi \in C_c^\infty(\R^2)$ and $\th \in \SS_0(\R^2)$ for some semi-norm $\| \cdot\|_X$, then we must have $\| \mrg{\nb}^2 \phi \|_{L^\infty} \lesssim_\de \| \phi \|_X$.
\end{thm}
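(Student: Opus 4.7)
The plan is to derive the pointwise bound on $\mrg{\nb}^2 \phi$ by testing the hypothesized inequality against a concentrating family $\th_\epsilon \in \SS_0(\R^2)$ built from a ``seed'' profile $\psi$. For fixed $x_0 \in \R^2$ and $\psi \in \SS_0(\R^2)$, set
\[
\th_\epsilon(x) = \epsilon^{-(2-\de)}\,\psi\bigl((x-x_0)/\epsilon\bigr).
\]
Scaling preserves the critical norm $\|\th_\epsilon\|_{\dot{H}^{-1+\de}} = \|\psi\|_{\dot{H}^{-1+\de}}$, and the $(-1+2\de)$-homogeneity of $T^\ell$ combined with the change of variable $y=(x-x_0)/\epsilon$ rewrites the nonlinearity as
\[
\int \phi\,T^\ell \th_\epsilon\,\nb_\ell \th_\epsilon\, dx = \epsilon^{-2}\int \phi(x_0 + \epsilon y)\,\bigl(T^\ell\psi\,\nb_\ell\psi\bigr)(y)\, dy,
\]
inviting a second-order Taylor expansion of $\phi$ around $x_0$. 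Since $\hat\psi$ is compactly supported away from $0$, the integrand $T^\ell\psi\,\nb_\ell\psi$ lies in $\SS(\R^2)$ and integrates against polynomial weights.

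The next step is to verify that the zeroth, first, and trace-second moments of this integrand all vanish, so that only the trace-free Hessian of $\phi$ survives in the $\epsilon\to 0$ limit. These three cancellations are the smooth-level analogues of the mean, impulse, and angular momentum conservation laws of Theorem~\ref{thm:angMoment1}: the zeroth moment vanishes because $T^\ell\psi\,\nb_\ell\psi = \nb_\ell(T^\ell\psi\cdot\psi)$ is a pure divergence; the first moments satisfy $\int y^j T^\ell\psi\,\nb_\ell\psi\,dy = -\int T^j\psi\cdot\psi\,dy = 0$ by the antisymmetry of $T^j$ on real-valued functions; and the trace $\int |y|^2 T^\ell\psi\,\nb_\ell\psi\,dy = 0$ follows after rewriting the integrand through the rotational generator $L_z = y_1\partial_2 - y_2\partial_1$, which is antisymmetric and commutes with $(-\De)^{-1+\de}$. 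Therefore
\[
\lim_{\epsilon\to 0} \int \phi\,T^\ell \th_\epsilon\,\nb_\ell\th_\epsilon\, dx = \tfrac{1}{2}\,M_{jk}(\psi)\,\mrg{\nb}^2_{jk}\phi(x_0),\qquad M_{jk}(\psi):=\int y^j y^k T^\ell\psi\,\nb_\ell\psi\,dy,
\]
with $M(\psi)$ a trace-free symmetric $2\times 2$ tensor depending only on $\psi$.

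The heart of the argument is to produce a seed $\psi_0 \in \SS_0$ with $M(\psi_0) \ne 0$ and to arrange $M(\psi)$ to point in any prescribed direction. Taking $\psi_0(r,\theta) = g(r)\cos\theta$ in polar coordinates, with $\hat g$ supported in an annulus away from the origin, a direct computation yields
\[
T^\ell\psi_0\,\nb_\ell\psi_0 = \frac{\sin(2\theta)}{2r}\bigl(\tilde g\, g' - \tilde g'\,g\bigr),
\]
where $\tilde g$ is the radial profile of $(-\De)^{-1+\de}\psi_0$. Integrating against $y^j y^k$ then gives $M_{12}(\psi_0) = \tfrac{\pi}{4}\int r^2(\tilde g g' - \tilde g' g)\,dr$ and $M_{11}(\psi_0) = M_{22}(\psi_0) = 0$, which is nonzero for a suitable choice of $g$. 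Since the multiplier $m^\ell$ transforms as a vector under $SO(2)$, the rotated seed $\psi_R(y) = \psi_0(R^{-1}y)$ satisfies $M(\psi_R) = R\,M(\psi_0)\,R^T$ and $\|\psi_R\|_{\dot{H}^{-1+\de}} = \|\psi_0\|_{\dot{H}^{-1+\de}}$, and as $R$ ranges over $SO(2)$ the matrix $R\,M(\psi_0)\,R^T$ sweeps the full unit circle in the $2$-dimensional space of trace-free symmetric matrices.

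Given any $\phi \in C_c^\infty$ and $x_0 \in \R^2$, I would choose $R$ so that $M(\psi_R)$ is parallel to $\mrg{\nb}^2\phi(x_0)$; applying the hypothesized inequality to the corresponding $\th_\epsilon$ and passing to the limit $\epsilon\to 0$ yields $|\mrg{\nb}^2\phi(x_0)| \lesssim_\de \|\phi\|_X$, uniformly in $x_0$, and taking the supremum gives the claimed $L^\infty$ bound. The main analytical hurdle is the nondegeneracy $M(\psi_0) \ne 0$: this is where the specific structure of the mSQG multiplier enters (the argument fails precisely in the degenerate situation where the nonlinearity is identically zero), and without it one would only recover the three lower-order conservation laws. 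Everything else is a scaling-and-symmetry computation.
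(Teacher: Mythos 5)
Your argument is correct and reaches the conclusion of Theorem~\ref{thm:sharpXBd}, but by a genuinely different route from the paper's. The paper tests the bound against modulated wave packets $\Th_\la + \bar\Th_\la$ with carrier frequency $\la e_1$ and envelope at the intermediate scale $\la^{-1/2}$, and invokes the bilinear divergence form of Section~\ref{sec:deriveBilinear}: the high-high interaction collapses onto $\widehat{\tilde{K}}^{j\ell}_\la(\la e_1,-\la e_1)\sim \nb^j m^\ell(e_1)$ times an approximate identity, so the limit is the explicit constant $2(\de-1)$ times $\pr_1\pr_2\phi(0)$ and nonvanishing is automatic (for $\de\neq 1$) because the symbol derivative is evaluated at a single frequency direction. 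You instead use an unmodulated, exactly self-similar concentration and kill the low-order Taylor coefficients of $\phi$ by the moment identities that are the pointwise avatars of mean, impulse and angular-momentum conservation; what survives is the trace-free second moment $M(\psi)$. This is more elementary (no bilinear kernel is needed) and it explains structurally why precisely the $\mrg{\nb}^2$ seminorm appears: its kernel is spanned by $1, x_1, x_2, |x|^2$, whose moments against the nonlinearity vanish. The price is that the proof hinges on the nondegeneracy $M(\psi_0)\neq 0$, which you assert via the polar formula but do not verify; it does hold, and the cleanest check is on the Fourier side: integrating by parts gives $M_{12}(\psi)=\pm\langle D\psi,(-\De)^{-1+\de}\psi\rangle=\mp\tfrac12\langle\psi,[D,(-\De)^{-1+\de}]\psi\rangle$ with $D=y_1\pr_1-y_2\pr_2$ antisymmetric, the commutator is the multiplier $2(1-\de)|\xi|^{2(\de-2)}(\xi_1^2-\xi_2^2)$, and for $\hat\psi_0(\xi)=i\,h(|\xi|)\cos\om$ with real $h\not\equiv 0$ supported in an annulus one finds $M_{12}(\psi_0)=c\,(1-\de)\int_0^\infty \rho^{2\de-1}h^2(\rho)\,d\rho\neq 0$ whenever $\de\neq 1$ --- the same factor $(1-\de)$ that appears in the paper's constant. (Both proofs, and indeed the theorem as literally stated, degenerate at $\de=1$, where the mSQG nonlinearity vanishes identically; that value must be excluded.) The remaining ingredients --- the exact invariance of $\|\cdot\|_{\dot{H}^{-1+\de}}$ under your rescaling, the membership $\th_\epsilon\in\SS_0$, the $O(\epsilon)$ Taylor remainder since $T^\ell\psi\,\nb_\ell\psi$ is Schwartz for $\psi\in\SS_0$, and the $SO(2)$-equivariance $M(\psi\circ R^{-1})=R\,M(\psi)\,R^T$ used to sweep out both trace-free directions --- are all as you describe.
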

Our next theorem shows that the estimate of Theorem~\ref{thm:mSQGbound} completely characterizes the mSQG nonlinearity among active scalar nonlinearities with similar translation and scaling symmetries.  This theorem shows that the special estimate of Theorem~\ref{thm:mSQGbound} 
in terms of the trace-free part of the Hessian is quite delicate and cannot be expected to hold for more general nonlinearities.
\begin{thm}[Characterization of the nonlinearity] \label{thm:nonlinearCharacterization}  Let $0 \leq \de \leq 1/2$, suppose that $m$ is degree $-1 + 2\de$ homogeneous, not necessarily smooth away from $0$, and assume that the bound
\ALI{
\left| \int \phi T^\ell[\th] \nb_\ell \th dx \right| &\lesssim \| \mrg{\nb}^2 \phi \|_{L^\infty} \| \th \|_{\dot{H}^{-1+\de}}^2 
}
holds for $\phi \in C_c^\infty$ and $\th \in \SS_0(\R^2)$.  Then $m^\ell$ is a constant multiple of the mSQG multiplier.
\end{thm}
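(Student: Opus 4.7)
The plan is to work in Fourier variables. Writing the trilinear form as
\[
\int \phi\, T^\ell[\theta]\nabla_\ell \theta\, dx \;=\; \int \hat\phi(-\xi_1-\xi_2)\,\sigma(\xi_1,\xi_2)\,\hat\theta(\xi_1)\hat\theta(\xi_2)\, d\xi_1\, d\xi_2,
\]
the symbol \(\sigma(\xi_1,\xi_2) := \tfrac{i}{2}[m^\ell(\xi_1)\xi_{2,\ell} + m^\ell(\xi_2)\xi_{1,\ell}]\) is symmetric in \((\xi_1,\xi_2)\) and homogeneous of degree \(2\delta\). The assumed bound controls \(\phi\mapsto N(\phi,\theta,\theta)\) by \(\|\mathring{\nabla}^2\phi\|_{L^\infty}\) alone, and an \(L^\infty/L^1\) duality argument (Hahn--Banach on \(\mathring{\nabla}^2(C_c^\infty)\) viewed inside the symmetric trace-free \(L^\infty\) tensor fields) produces a symmetric trace-free \(S^{j\ell}(\theta,\theta)\) with \(\|S\|_{L^1}\lesssim \|\theta\|_{\dot H^{-1+\delta}}^2\) satisfying \(T^\ell[\theta]\nabla_\ell\theta = \nabla_j\nabla_\ell S^{j\ell}\). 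On the symbol side this is the factorization
\[
\sigma(\xi_1,\xi_2) \;=\; -(\xi_1+\xi_2)_j(\xi_1+\xi_2)_\ell\, s^{j\ell}(\xi_1,\xi_2)
\]
with \(s^{j\ell}\) symmetric trace-free in \((j,\ell)\).

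Next I Taylor-expand \(\sigma\) on the slice \(\xi_2 = -\xi_1+\eta\). The factorization forces \(\sigma\) to be \(O(|\eta|^2)\) with trace-free leading quadratic coefficient. The \(|\eta|^0\) condition \(\sigma(\xi_1,-\xi_1) = 0\) reads \([m(-\xi_1) - m(\xi_1)]\cdot\xi_1 = 0\); splitting \(m = m_{\mathrm e} + m_{\mathrm o}\) into even and odd parts, this gives \(m_{\mathrm o}\cdot\xi = 0\), i.e., the odd part of \(m\) is angular. The \(|\eta|^1\) condition \(m^j(\xi) + \partial_j m^\ell(-\xi)\,\xi_\ell = 0\) separates by parity: its odd-in-\(\xi\) piece is automatic given \(m_{\mathrm o}\cdot\xi = 0\), while its even piece reduces to \(m_{\mathrm e}^j = \partial_j m_{\mathrm e}^\ell\,\xi_\ell\). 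Contracting with \(\xi_j\) and using Euler's identity \(\xi^k\partial_k m_{\mathrm e}^\ell = (-1+2\delta)\,m_{\mathrm e}^\ell\) yields \((2-2\delta)\,m_{\mathrm e}\cdot\xi = 0\), so \(m_{\mathrm e}\cdot\xi = 0\) for \(\delta<1\); feeding this back forces \(m_{\mathrm e} = 0\). Hence \(m^\ell(\xi) = F(\hat\xi)\,\ep^{\ell a}\xi_a\,|\xi|^{-2+2\delta}\) for some even function \(F\) on \(S^1\).

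The final step exploits the trace-free constraint on the quadratic Taylor coefficient. Writing \(\sigma(\xi_1,\xi_2) = \tfrac{i}{2}K(\xi_1,\xi_2)[G(\xi_1) - G(\xi_2)]\) with antisymmetric \(K(\xi_1,\xi_2) := \ep^{\ell a}\xi_{1,a}\xi_{2,\ell}\) and \(G(\xi) := F(\hat\xi)|\xi|^{-2+2\delta}\) (even in \(\xi\)), Taylor-expanding at \(\xi_2 = -\xi_1+\eta\) gives the symmetric quadratic coefficient
\[
A_{j\ell}(\xi_1) \;=\; \tfrac{i}{2}\bigl[\ep^{\ell a}\xi_{1,a}\,\partial_j G(\xi_1) + \ep^{j a}\xi_{1,a}\,\partial_\ell G(\xi_1)\bigr], \qquad
\tr A(\xi_1) \;=\; i\,\xi_1^\perp\cdot \nabla G(\xi_1).
\]
The factorization forces \(A_{j\ell}=-2\,s^{j\ell}(\xi_1,-\xi_1)\), which must be trace-free, so \(\xi_1^\perp\cdot\nabla G(\xi_1) = 0\) for all \(\xi_1\). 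Thus \(G\) is radial, which combined with the form \(G(\xi) = F(\hat\xi)|\xi|^{-2+2\delta}\) forces \(F\) to be constant. We conclude \(m^\ell(\xi) = c\,\ep^{\ell a}\xi_a\,|\xi|^{-2+2\delta}\), the mSQG multiplier up to a scalar.

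The hard part is the rigorous derivation of the double-divergence representation in the first step. One must either carefully navigate \((L^\infty)^*\) (which includes finitely additive measures, not just \(L^1\) functions) or, more efficiently, extract the order-2 trace-free vanishing of \(\sigma\) directly from the trilinear bound via wave-packet test functions, using that \(\theta\in\SS_0\) has compact Fourier support away from zero. Once this representation (or its Taylor-level consequences on \(\sigma\)) is in hand, the algebraic steps in Paragraphs 2 and 3 are direct computations.
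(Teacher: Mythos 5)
Your algebraic endgame is essentially the paper's: the order-zero condition $\sigma(\xi,-\xi)=0$ gives that the odd part is angular, the order-one condition kills the even part via Euler's identity for $\de\neq 1$, and the trace-free constraint on the quadratic coefficient forces $\xi^\perp\cdot\nb G=0$, hence $G$ radial and $m$ a multiple of the mSQG multiplier. Those computations check out and coincide with the paper's conclusions at the symbol level (the paper phrases the last step as $\nb_i H\, M^i(\bar\xi)=0$ for $m^\ell = H M^\ell$, with $\nb^j M^\ell$ trace-free). However, there are two genuine gaps. First, the theorem allows $m$ to be a homogeneous multiplier that is \emph{not} smooth away from the origin, so the quantities $\pr_j m^\ell(\xi)$, the Taylor expansion in $\eta$, and Euler's identity are not available as stated. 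The paper spends a whole subsection reducing to the smooth case by a ``twisted mollification'' — averaging the pushforward of $m$ over small rotations, which preserves the hypothesis bound (since $\|\mrg{\nb}^2\psi\|_{L^\infty}$ and $\|\th\|_{\dot{H}^{-1+\de}}$ are rotation invariant), produces a smooth multiplier, and converges back to $m$ in $\DD'$. Your proposal never addresses this and cannot proceed without it.

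Second, your first step — the trace-free double-divergence factorization $\sigma=-(\xi_1+\xi_2)_j(\xi_1+\xi_2)_\ell\, s^{j\ell}$ — is not established, and you correctly sense this is where the difficulty lives. Hahn--Banach on $\mrg{\nb}^2(C_c^\infty)\subseteq L^\infty$ yields an element of $(L^\infty)^*$, i.e.\ a finitely additive measure, not an $L^1$ tensor field $S^{j\ell}$; and even granting an $L^1$-valued $S[\th,\th]$ for each fixed $\th$, the passage from the distributional identity $T^\ell\th\,\nb_\ell\th=\nb_j\nb_\ell S^{j\ell}$ to a \emph{pointwise} factorization of $\sigma(\xi_1,\xi_2)$ with $s^{j\ell}$ continuous and trace-free near the antidiagonal requires concentrating $\hat\th$ at $\pm\la\bar\xi$ and passing to limits — i.e.\ exactly the wave-packet computations you defer to. Moreover, the three conditions you need sit at three different scaling thresholds (order-zero vanishing corresponds to failure of boundedness below $\dot H^{\de}$, order-one below $\dot H^{-1/2+\de}$, order-two at $\dot H^{-1+\de}$), so they require three separately normalized families $\th_\la$ with leading-order expansions and error estimates; this is the bulk of the paper's proof (its Propositions on the non-incompressible and even cases, plus the final trace computation leading to the false estimate $\|\De\psi\|_{L^\infty}\lesssim\|\mrg{\nb}^2\psi\|_{L^\infty}$). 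As written, your proposal is a correct outline of the algebra that remains once those analytic inputs are secured, but it does not yet constitute a proof.
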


Along the way to proving Theorem~\ref{thm:nonlinearCharacterization} we elaborate how the nonlinearity is not bounded on any Sobolev space rougher than $\dot{H}^{-1+\de}$.  The theorem actually applies to the range $0 \leq \de < \infty$ with $\de \neq 1$, but our method breaks down in the special case $\de = 1$ (which is also the case where the mSQG nonlinearity vanishes) and we do not know if the theorem holds in this case, although we are able to make some conclusions about the structure of $m$ for a potential counterexample (in particular the velocity field cannot be incompressible).  

The special estimate \eqref{eq:specialmSQGBound} is intimately tied to the conservation of impulse and angular momentum for the mSQG equation, but is not quite strong enough to prove these conservation laws.  
Namely, the functions $\{ 1, x_1, x_2, |x|^2 \}$ that appear in the definitions of mean, impulse and angular momentum (equation~\eqref{eq:conservationLaws}) span precisely those functions for which the trace-free part of the Hessian vanishes.  It is tempting to try to prove angular momentum conservation by applying the estimate in Theorem~\ref{thm:mSQGbound} to a version of $|x|^2$ that has been cut off in space.  However, the bound in Theorem~\ref{thm:mSQGbound} is not strong enough for this strategy to succeed in proving angular momentum conservation.

Nonetheless, what we will show in Theorem~\ref{thm:angMomentum} below is that conservation of angular momentum and impulse holds for {\bf all} weak solutions in $L_t^2 \dot{H}^{-1+\de}$, which, as we have noted before, is the lowest regularity in which the nonlinearity makes sense.  
We now explain the precise formulation. 

Upon first sight, the definition of angular momentum seems to require that $|x|^2 |\th(t)| \in L^1$ to make sense, but we can actually define this quantity for a much rougher class of data.
\begin{defn}  We say a distribution $f \in \DD'(\R^2)$ is in the {\bf angular momentum class} if it can be written in the form $f(x) = A(x) + \nb_j B^j(x) + \nb_j \nb_\ell C^{j\ell}(x)$ where $|x|^2 |A| + |x| |B| + |C| \in L^1(\R^2 \setminus B_R)$ for some ball $B_R$ around the origin of radius $R$.
\end{defn}
Angular momentum can be defined for any distribution in the angular momentum class as follows.
\begin{defn}  Let $f \in \DD'(\R^2)$.  We say $f$ has well-defined angular momentum if, for any bump function $\chi \in C_c^\infty(\R^2)$ that is equal to $1$ in a neighborhood of $0$, the limit
\ALI{
\lim_{R \to \infty} \int {\chi\left(\fr{x}{R}\right)} |x|^2 f(x) \textrm{`}dx\textrm{'} 
}
exists and is independent of the choice of $\chi$.  Here `$dx$' means that the integral is interpreted in the sense of distributions.
\end{defn}
Generalized impulse can be defined similarly.  It is a simple exercise to show that any distribution in the angular momentum class has well-defined angular momentum and impulse\footnote{In fact, one can also have terms of the form  $\nb_i \nb_j \nb_\ell D^{ij\ell}$ with $|x|^{-1} |D| \in L^1(\R^2 \setminus B_R)$ to have well-defined angular momentum and impulse.}.  (See for example the proof of Lemma~\ref{lem:conserveLem} below.)  Our theorem on conservation of impulse and angular momentum is the following.

\begin{thm}[Conservation laws] \label{thm:angMomentum}  If $0 \leq \de < \infty$ and $\th$ is an mSQG solution in $L_t^2 \dot{H}^{-1+\de}(I \times \R^2)$, $I$ an open interval, then for any time $t_0 \in \bar{I}$, $\th(t_0)$ has well-defined angular momentum (impulse) if and only if $\th(t)$ has well-defined angular momentum (impulse) at every time $t \in I$ and the angular momentum (impulse) is constant in time.  Furthermore, $\th(t)$ belongs to the angular momentum class at a time $t_0 \in \bar{I}$ if and only if $\th(t)$ belongs to the angular momentum class at every time $t \in \bar{I}$.
\end{thm}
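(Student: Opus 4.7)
The plan is to upgrade the a priori bound of Theorem~\ref{thm:mSQGbound} into a structural identity: for $\theta \in \dot{H}^{-1+\de}$ I expect the mSQG nonlinearity to admit a double-divergence representation
\[ T^\ell \theta \, \nb_\ell \theta = \nb_j \nb_k S^{jk}[\theta], \qquad S^{jk} = S^{kj}, \quad S^{jj} = 0, \]
with a trace-free symmetric stress tensor satisfying $\|S^{jk}[\theta]\|_{L^1(\R^2)} \lesssim_\de \|\theta\|_{\dot{H}^{-1+\de}}^2$. In the Euler case $\de = 0$ this can be done essentially by hand: using incompressibility to write $v^k \omega$ as a divergence of a symmetric bilinear expression in $v$ and then taking one more derivative, one produces a trace-free symmetric tensor with $v \cdot \nb \omega = \nb_j \nb_k S^{jk}$ and pointwise $|S^{jk}| \lesssim |v|^2$, hence $\|S^{jk}\|_{L^1} \lesssim \|v\|_{L^2}^2 \approx \|\omega\|_{\dot{H}^{-1}}^2$. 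For $\de > 0$ I would construct $S^{jk}$ by a Fourier-analytic bilinear symbol calculation, exploiting the oddness and degree-$(-1+2\de)$ homogeneity of $m^\ell$ and the symmetrization that underlies the proof of Theorem~\ref{thm:mSQGbound}; the identity $\int \nb_j \nb_k \phi \cdot S^{jk} dx = \int \mrg{\nb}^2 \phi \cdot S^{jk} dx$, valid whenever $S$ is symmetric and trace-free, recovers Theorem~\ref{thm:mSQGbound} a posteriori by H\"older.

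Granted this structural identity, I fix $\chi \in C_c^\infty(\R^2)$ equal to $1$ on $\{|x|\le 1\}$ and set $\phi_R(x) := \chi(x/R) |x|^2$. Because $\mrg{\nb}^2 |x|^2 = 0$, an explicit computation shows that $\mrg{\nb}^2 \phi_R$ is supported in the annulus $\{|x| \sim R\}$, uniformly bounded in $L^\infty$, and tends to $0$ pointwise as $R \to \infty$. Using the weak formulation of mSQG (with the nonlinearity defined as in Theorem~\ref{thm:oddBound}) together with the structural representation, I obtain for any $t_1, t_2 \in \bar I$
\[ \int \phi_R \theta(t_2) dx - \int \phi_R \theta(t_1) dx = \int_{t_1}^{t_2} \int \mrg{\nb}^2 \phi_R(x) \cdot S^{jk}[\theta(t)](x) \, dx \, dt. \]
Since $\|S^{jk}[\theta(t)]\|_{L^1_x} \lesssim \|\theta(t)\|_{\dot{H}^{-1+\de}}^2$ belongs to $L^1_t(I)$ by the hypothesis $\theta \in L^2_t \dot{H}^{-1+\de}$, dominated convergence in both $x$ and $t$ sends the right-hand side to $0$ as $R \to \infty$.

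If $\theta(t_1)$ has well-defined angular momentum then by hypothesis $\int \phi_R \theta(t_1) dx \to A(t_1)$, and the displayed identity forces $\int \phi_R \theta(t_2) dx$ to converge to the same limit for every $t_2 \in \bar I$, yielding the biconditional and conservation simultaneously; independence of the limit from the choice of $\chi$ is inherited from $t_1$. Impulse is handled identically with $\psi_R^j(x) := \chi(x/R) x_j$, whose trace-free Hessian also vanishes. Preservation of the angular momentum class in time follows from the identity $\theta(t_2) - \theta(t_1) = \int_{t_1}^{t_2} \nb_j \nb_k S^{jk}[\theta] dt$, which exhibits the time-difference as a double divergence of an $L^1_{t,x}$ tensor, hence in the angular momentum class. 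The main obstacle is the structural identity in the first paragraph: the a priori estimate of Theorem~\ref{thm:mSQGbound} alone is not enough, because $\|\mrg{\nb}^2 \phi_R\|_{L^\infty} = O(1)$ does not decay; only the $L^1$-integrability of the stress $S^{jk}$ combined with pointwise vanishing of $\mrg{\nb}^2 \phi_R$ lets dominated convergence close the argument.
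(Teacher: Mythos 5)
Your proposal follows essentially the same route as the paper: the structural identity you posit is exactly Theorem~\ref{thm:divForm} (a symmetric, trace-free stress $\widetilde{T}^{j\ell}[\th,\th]$ bounded into $L^1$ with $\nb_\ell[\th T^\ell \th] = \nb_j\nb_\ell \widetilde{T}^{j\ell}$), and your cutoff argument with $\chi(x/R)\,|x|^2$ and dominated convergence is precisely the paper's Lemma~\ref{lem:conserveLem} together with the time-slice identity \eqref{eq:restrictTime}. The one step you leave as a sketch --- producing the trace-free stress for $\de>0$ --- is where the paper's actual work lies: the high-high paraproduct piece is automatically trace-free because the Jacobian of the frequency-truncated mSQG multiplier is trace-free (line \eqref{eq:itsTraceFree}), but the low-frequency piece is not, and must be replaced by ${\cal R}^{j\ell}\nb_a\nb_b \widetilde{T}_L^{ab}$ with its $L^1$ boundedness re-verified directly via the Schur test, since the zeroth-order operator ${\cal R}^{j\ell}\nb_a\nb_b$ is not bounded on $L^1$.
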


At this point we pause to state precisely our definition of weak solution.  We also expound the remark that, much like for other evolution equations, one can define a unique restriction of an mSQG solution to every time slice in its domain in such a way that the family of distributions $\th(t)$ are continuous in $t$ with respect to weak convergence in $\DD'$.  To make these remarks we need the definition of the fixed-time nonlinearity from Theorem~\ref{thm:oddBound}.
\begin{defn} \label{defn:nonlinear} We introduce the notation $B(\psi, \th)$ to denote the unique extension of the linear-quadratic form 
$B(\psi, \th) =  \int \psi T^\ell[\th] \nb_\ell \th dx$ 
to the space $C_c^\infty \times \dot{H}^{-1+\de}$, whose existence follows from Theorem~\ref{thm:oddBound}.  We use $B(\psi, \th_1, \th_2)$ to denote the unique trilinear form, symmetric in $\th_1$, $\th_2$, such that $B(\psi, \th, \th) = B(\psi, \th)$.
\end{defn}
Specifically, we define $B(\psi, \th) = \lim_{n \to \infty} B(\psi, \th_n)$ where each $\th_n \in \SS_0(\R^2)$ and $\th_n \to \th$ in $\dot{H}^{-1+\de}$.

In terms of this definition, a weak solution to an active scalar equation with odd, angular multiplier is a distribution of class $\th \in L_t^2 \dot{H}^{-1+\de}(I \times \R^2)$ such that for all $\phi(t,x) \in C_c^\infty$ one has
\ali{
-\int \pr_t \phi(t,x) \, \th(t,x)\, \textrm{`}dx dt\textrm{'} + \int B(\phi(t), \th(t))dt &= 0, \label{eq:defnEqn}
}
where again we use the single quotes to denote the pairing of a distribution with a test function.  The time-integral is well-defined by virtue of the estimate of Theorem~\ref{thm:oddBound} and an approximation of $\th(t)$ by simple functions of time taking values in $\dot{H}^{-1+\de}$.  We define the restriction of $\th$ to a time slice $t_0 \in \bar{I}$ by the equation $\th(t_0,x) = \th(t,x) - \int_{t_0}^t B( \cdot, \th(s) ) ds$.  
At first sight the distribution on the right hand side (call it $u$) appears to depend on $t$, but the definition of a weak solution implies that $\pr_t u = 0$ in the weak sense, so that $u$ can be identified with a unique time-independent distribution on $\R^2$.

\paragraph{Optimality of the definition}

With these definitions in hand, the final question we wish to address in this work is: {\it Is our definition of the nonlinearity the best possible definition?}  To compare different definitions, there are four criteria one can consider:
\begin{enumerate}
	\item  Which definition applies at the lowest possible regularity for $\th$?
	\item  Which definition yields the best possible estimate for the nonlinearity?
	\item  Which definition yields the best possible continuity properties for the nonlinearity?
	\item  Which definition yields a continuous extension to the largest possible space of test functions? 
\end{enumerate}

Regarding the first point, it will be shown in the course of proving Theorem~\ref{thm:nonlinearCharacterization} that the nonlinearity does not make sense for $\th$ below $\dot{H}^{-1+\de}$, so that our definition is optimal in this regard.  Regarding the second point, Theorem~\ref{thm:sharpXBd} shows that the bound of Theorem~\ref{thm:mSQGbound} is the best possible one.  (Recall that in contrast the existing definitions \cite{marchand2008existence,cheng2021non} require at least $3$ derivatives of the test function.)  Regarding points three and four, we have the following theorems on continuous extensions of the nonlinearity.

Our first theorem shows that the test function need not even be $C^2$ but can be any distribution whose weak-Hessian is bounded.  Moreover, the continuity is stronger than what is implied by the boundedness of Theorem~\ref{thm:oddBound}.
\begin{thm}\label{thm:extension}  Let $m$ be odd, angular and degree $-1+2\de$ homogeneous.  Let $\dot{W}^{2,\infty}$ be the (non-Hausdorff) space of distributions with weak Hessian in $L^\infty$.  Then there is a unique extension of the linear-quadratic form $B(\psi, \th)$ to $\dot{W}^{2,\infty} \times \dot{H}^{-1+\de}$ that satisfies
\ALI{
B(\Psi, \Th) &= \lim_n B(\Psi_n, \Th_n)
}
whenever $\nb^2\Psi_n$ converges to $\nb^2 \Psi$ in $L^\infty$ weak-*, and $\Th_n \to \Th$ strongly in $\dot{H}^{-1+\de}$.  The extended linear-quadratic form also satisfies a bound of the form $|B(\Psi,\Th)| \lesssim \| \nb^2 \Psi \|_{L^\infty} \| \Th \|_{\dot{H}^{-1+\de}}^2$, (perhaps with a different optimal constant).
\end{thm}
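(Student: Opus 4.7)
The plan is to leverage the \emph{double divergence form} representation of the nonlinearity obtained in the course of proving Theorem~\ref{thm:oddBound} (and referenced in the introduction as the structural content ``first obtained in the present work'').  Namely, that proof produces a symmetric bilinear form $F^{jk} : \dot{H}^{-1+\de} \times \dot{H}^{-1+\de} \to L^1(\R^2)$, $1\le j,k \le 2$, such that
\[
B(\phi,\th) \;=\; \int \pr_j \pr_k \phi(x)\, F^{jk}(\th,\th)(x)\, dx, \qquad \|F^{jk}(\a,\b)\|_{L^1} \;\lesssim_m\; \|\a\|_{\dot{H}^{-1+\de}} \|\b\|_{\dot{H}^{-1+\de}},
\]
for every $\phi\in C_c^\infty(\R^2)$ and $\a,\b,\th \in \SS_0(\R^2)$, with $F^{jk}$ extending by bilinear continuity to $\dot{H}^{-1+\de}\times\dot{H}^{-1+\de}$.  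Given this, I would simply \emph{define} the extension by
\[
B(\Psi,\Th) \;\coloneqq\; \int \pr_j\pr_k\Psi(x)\, F^{jk}(\Th,\Th)(x)\, dx.
\]
The integrand pairs an $L^\infty$ distribution with an $L^1$ function, so the integral is absolutely convergent, depends only on the equivalence class $[\Psi]\in\dot{W}^{2,\infty}$ (affine functions are killed by $\nb^2$), agrees with the original $B$ on $C_c^\infty\times\dot{H}^{-1+\de}$ by construction, and satisfies $|B(\Psi,\Th)|\le \|\nb^2\Psi\|_{L^\infty}\cdot\|F^{jk}(\Th,\Th)\|_{L^1} \lesssim \|\nb^2\Psi\|_{L^\infty}\|\Th\|_{\dot{H}^{-1+\de}}^2$ by $L^1$--$L^\infty$ duality.

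For the joint continuity statement I would split
\[
B(\Psi_n,\Th_n) - B(\Psi,\Th) \;=\; \underbrace{[B(\Psi_n,\Th_n) - B(\Psi_n,\Th)]}_{I_n} \;+\; \underbrace{[B(\Psi_n,\Th) - B(\Psi,\Th)]}_{II_n}.
\]
For $I_n$, Banach--Steinhaus gives $\sup_n \|\nb^2\Psi_n\|_{L^\infty}<\infty$, and the polarization identity $F(\Th_n,\Th_n)-F(\Th,\Th)=F(\Th_n-\Th,\Th_n)+F(\Th,\Th_n-\Th)$ together with the bilinear $L^1$ bound on $F$ yields $\|F^{jk}(\Th_n,\Th_n)-F^{jk}(\Th,\Th)\|_{L^1}\to 0$ using the strong convergence $\Th_n\to \Th$, hence $I_n\to 0$.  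For $II_n$, one is testing the fixed $L^1$ function $F^{jk}(\Th,\Th)$ against the weak-$\ast$ convergent sequence $\pr_j\pr_k \Psi_n$, so $II_n\to 0$ by the very definition of weak-$\ast$ convergence in $L^\infty$.

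Uniqueness reduces to a density claim: for every $\Psi\in\dot{W}^{2,\infty}$ one can find $\Psi_n\in C_c^\infty$ with $\nb^2\Psi_n\to\nb^2\Psi$ weak-$\ast$ in $L^\infty$.  I would construct such $\Psi_n$ by fixing a $C^{1,1}$ representative of $\Psi$ (available because $\nb^2\Psi\in L^\infty$ forces $\nb\Psi$ to be Lipschitz and $\Psi$ to be at most quadratic), mollifying at scale $1/n$, and multiplying by a cutoff $\chi(x/R_n)$ with $R_n\to\infty$ sufficiently fast.  Expanding $\nb^2$ by Leibniz, the principal term $\chi(x/R_n)\cdot(\eta_{1/n}*\nb^2\Psi)$ converges weak-$\ast$ to $\nb^2\Psi$ by dominated convergence against $L^1$ test functions, while the cross terms carry $R_n^{-1}$ or $R_n^{-2}$ factors that exactly absorb the linear growth of $\nb\Psi$ and the quadratic growth of $\Psi$ on the annulus $\{R_n\le|x|\le 2R_n\}$, producing uniformly bounded integrands supported on the tail $\{|x|\ge R_n\}$, which vanish weak-$\ast$ against any $L^1$ function.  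The main obstacle in the proof is precisely this density step: the operator-theoretic content (existence of the extension, the operator bound, and the joint continuity) is a formal consequence of $L^1$--$L^\infty$ duality once the double divergence form is in hand, but the weak-$\ast$ approximation of bounded-Hessian distributions by $C_c^\infty$ functions is the one step that genuinely uses the structure of $\dot{W}^{2,\infty}$ as distributions of at most quadratic growth.
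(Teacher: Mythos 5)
Your proposal is correct and follows essentially the same route as the paper: the $L^1$-valued double-divergence representation $B(\Psi,\th)=\int \nb_j\nb_\ell\Psi\,\widetilde{T}^{j\ell}[\th,\th]\,dx$ is exactly the paper's equation \eqref{eq:IntegralRep} (derived in Section~\ref{sec:continuitySec} from \eqref{eq:TLLH}--\eqref{eq:THH}), the boundedness and joint continuity follow by the same $L^1$--$L^\infty$ duality and polarization/uniform-boundedness arguments, and your density construction (mollify a $C^{1,1}$ representative, cut off at scale $R_n$, use the quadratic/linear growth bounds to control the Leibniz cross terms) is the paper's Appendix construction $\psi_n=\phi(x/n)\widetilde{\Psi}_{1/n}$. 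The only cosmetic difference is that you define the extension directly from the integral representation, whereas the paper first defines it term-by-term in the paraproduct decomposition and introduces the integral representation afterward to prove continuity.
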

We must stress that the continuous extension guaranteed by this theorem does not follow from the bound of Theorem~\ref{thm:oddBound} by a simple density argument since test functions are not norm dense in $\dot{W}^{2,\infty}$.  The content of the theorem is that the nonlinearity has a stronger property that allows continuity in a weaker topology.

For the mSQG nonlinearity there is again an improvement, which is that we can handle any test function whose weak Hessian has bounded trace-free part.  We let $\mrg{W}^{2,\infty}$ denote this non-Hausdorff space of distributions.
\begin{thm} \label{thm:mSQGextend}  Let $m$ be the mSQG multiplier, $0 \leq \de < \infty$.  Let $\mrg{W}^{2,\infty}$ be the (non-Hausdorff) space of distributions whose weak Hessian has trace-free part in $L^\infty$.  Then there is a unique extension of the linear-quadratic form $B(\psi, \th)$ to $\mrg{W}^{2,\infty} \times \dot{H}^{-1+\de}$ that satisfies
\ali{
B(\Psi, \Th) &= \lim_m \lim_n B(\Psi_n, \Th_m) \label{equation:doubleLim}
}
whenever $\psi_n$ is a sequence in $\SS(\R^2)$ converging in $\SS'(\R^2)$ to $\Psi$, and $\Th_m$ is a sequence in $\SS_0(\R^2)$ converging strongly in $\dot{H}^{-1+\de}$ to $\Th$.  This extended linear-quadratic form satisfies an estimate of the form $|B(\Psi,\Th)| \lesssim \| \mrg{\nb}^2 \Psi \|_{L^\infty} \| \Th \|_{\dot{H}^{-1+\de}}^2$, (perhaps with a different optimal constant).
\end{thm}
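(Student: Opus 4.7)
The plan is to represent the mSQG nonlinearity in a \emph{double-divergence form} and then extend by $L^\infty$--$L^1$ duality. By inspecting the proof of Theorem~\ref{thm:mSQGbound}, I would first extract a symmetric, trace-free tensor $F^{jk}_\Th$, defined initially for $\Th \in \SS_0(\R^2)$, such that
\[
T^\ell[\Th]\,\nb_\ell \Th = \nb_j \nb_k F^{jk}_\Th, \qquad \|F_\Th\|_{L^1(\R^2)} \lesssim_\de \|\Th\|_{\dot H^{-1+\de}}^2,
\]
with a polarized bilinear version $\|F(\Th_1,\Th_2)\|_{L^1} \lesssim \|\Th_1\|_{\dot H^{-1+\de}} \|\Th_2\|_{\dot H^{-1+\de}}$. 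Integrating by parts and using tracelessness of $F$ then recovers Theorem~\ref{thm:mSQGbound}: $\int \phi\, T^\ell\Th\, \nb_\ell \Th\, dx = \int \mrg{\nb}^2_{jk}\phi \cdot F^{jk}_\Th\, dx$ for $\phi \in \SS(\R^2)$, so this representation sits naturally underneath the trace-free Hessian bound.

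Granted such a representation, the extension is essentially automatic. By density of $\SS_0$ in $\dot H^{-1+\de}$, the map $\Th \mapsto F_\Th$ extends to a bounded symmetric bilinear map $\dot H^{-1+\de}\times \dot H^{-1+\de} \to L^1(\R^2;\R^{2\times 2})$ with values in symmetric trace-free tensors. I then define
\[
B(\Psi, \Th) := \int_{\R^2} \mrg{\nb}^2_{jk} \Psi \cdot F^{jk}_\Th\, dx
\]
for $\Psi \in \mrg{W}^{2,\infty}$ and $\Th \in \dot H^{-1+\de}$, which is a legitimate $L^\infty$--$L^1$ pairing. This immediately yields $|B(\Psi,\Th)| \lesssim \|\mrg{\nb}^2\Psi\|_{L^\infty}\|\Th\|_{\dot H^{-1+\de}}^2$, and since the right-hand side depends only on $\mrg{\nb}^2\Psi$, $B$ descends properly to the non-Hausdorff space $\mrg{W}^{2,\infty}$.

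To verify the double-limit characterization~\eqref{equation:doubleLim}, I fix $\Th_m \in \SS_0$: since $\widehat{\Th_m}$ is compactly supported away from $0$, the product $T^\ell[\Th_m]\nb_\ell\Th_m$ is Schwartz, so $\psi \mapsto \langle \psi,\, T^\ell[\Th_m]\nb_\ell\Th_m\rangle$ is continuous on $\SS'(\R^2)$. Distributional integration by parts, using that $F_{\Th_m}$ is itself Schwartz and traceless, identifies this pairing with $\int \mrg{\nb}^2 \Psi \cdot F_{\Th_m}\, dx$; hence the inner limit as $\Psi_n \to \Psi$ in $\SS'$ equals $B(\Psi,\Th_m)$ in my sense. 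The outer limit as $\Th_m \to \Th$ in $\dot H^{-1+\de}$ then follows from $L^1$-continuity of $\Th \mapsto F_\Th$ paired against $\mrg{\nb}^2\Psi \in L^\infty$. Uniqueness of the extension is forced by the double-limit formula.

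The main obstacle is extracting the double-divergence representation itself with the stated $L^1$ control. Theorem~\ref{thm:mSQGbound}, combined with the vanishing of the mean, impulse, and angular-momentum moments of $T^\ell\Th\, \nb_\ell\Th$ on $\SS_0$-class data (cf.\ Theorem~\ref{thm:angMomentum}), already makes such a representation plausible via duality. However, a naive Hahn--Banach argument only produces an element of $(L^\infty)^*$ --- a finitely additive measure --- rather than an honest $L^1$ tensor. One therefore needs to construct $F^{jk}_\Th$ explicitly, guided by the Fourier-multiplier/paraproduct analysis underlying Theorem~\ref{thm:mSQGbound}, and the construction must exploit the precise structure of the mSQG multiplier, in accord with the rigidity expressed in Theorem~\ref{thm:nonlinearCharacterization}. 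This is where I expect the bulk of the technical work to reside; the extension theorem itself is essentially a corollary once the $L^1$ stress tensor is in hand.
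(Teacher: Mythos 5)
Your proposal is correct and follows essentially the same route as the paper: the key object is exactly the symmetric, trace-free, $L^1$-valued stress tensor of Theorem~\ref{thm:divForm} (constructed in Section~\ref{sec:mSQGSpecial} by combining the automatically trace-free high-high kernel \eqref{eq:itsTraceFree} with the trace-free antidivergence ${\cal R}$ applied to the paraproduct terms), after which the extension is the $L^\infty$--$L^1$ pairing $\int \mrg{\nb}^2_{j\ell}\Psi\, \widetilde{T}^{j\ell}[\Th,\Th]\,dx$ and the double-limit verification proceeds as you describe. You correctly locate the technical weight in the construction of that tensor and correctly note (as the paper does via Theorem~\ref{thm:traceFree}) that one needs $\Psi$ to be tempered for the inner limit to make sense.
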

The existence of the inner limit in \eqref{equation:doubleLim} is immediate, while the existence of the outer limit follows from the boundedness in Theorem~\ref{thm:mSQGbound}.  
We remark that the extended operator has an additional continuity analogous to Theorem~\ref{thm:extension}. namely that $B(\Psi_n, \Th_n) \to B(\Psi, \Th)$ if $\mrg{\nb}^2 \Psi_n$ converges to $\mrg{\nb}^2 \Psi$ in $L^\infty$ weak-* and $\Th_n \to \Th$ in $\dot{H}^{-1+\de}$.  However, it is not clear whether this continuity property uniquely characterizes the extension as we do not know whether $C_c^\infty$ is sequentially weak-* dense in $\mrg{W}^{2,\infty}$.

The statement of Theorem~\ref{thm:mSQGextend} implicitly takes advantage of the following fact\footnote{Without this theorem we can state that the nonlinearity extends to test functions in $\mrg{W}^{2,\infty}$ that are tempered distributions, but Theorem~\ref{thm:traceFree} assures that such distributions contain all of $\mrg{W}^{2,\infty}$.}, which we were motivated by Theorem~\ref{thm:mSQGextend} to discover and believe to be of independent interest: Every distribution whose Hessian has bounded trace free part is a tempered distribution.  More precisely, we have the following Theorem
\begin{thm}[\cite{isettTraceFree}] \label{thm:traceFree} Let $\Psi \in \DD'(\R^d)$ be a distribution such that the trace free part of the weak Hessian $\mrg{\nb}^2_{j\ell} \Psi = \nb_j\nb_\ell \Psi - \fr{1}{d}\De \Psi \de_{j\ell}$ belongs to $L^\infty$.  Then $\Psi$ is a tempered distribution.  Moreover, $\Psi$ has log-Lipschitz continuous first derivatives and satisfies the growth estimates
\ALI{
|\Psi(x)| = O(|x|^2 \log |x|), \qquad |\nb \Psi(x)| = O(|x| \log |x|) \qquad \mbox{ as } |x| \to \infty.
}
\end{thm}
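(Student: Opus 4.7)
The strategy is based on the algebraic identity
\[
\sum_j \nb_j \mrg{\nb}^2_{j\ell} \Psi = \fr{d-1}{d} \De \nb_\ell \Psi, \qquad \sum_{j,\ell} \nb_j \nb_\ell \mrg{\nb}^2_{j\ell} \Psi = \fr{d-1}{d} \De^2 \Psi,
\]
both valid in $\DD'(\R^d)$ and following by direct computation from $\mrg{\nb}^2_{j\ell}\Psi = \nb_j\nb_\ell\Psi - \fr{1}{d}\De\Psi\de_{j\ell}$. Setting $f_{j\ell} := \mrg{\nb}^2_{j\ell}\Psi \in L^\infty$, we see that $\De\nb\Psi$ and $\De^2 \Psi$ are, respectively, the divergence and double divergence of $L^\infty$ tensors. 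The kernel of $\mrg{\nb}^2$ on polynomials is $\{1, x_1, \ldots, x_d, |x|^2\}$, a $(d+2)$-dimensional space; this controls the only ambiguity in recovering $\Psi$ from $f$.

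The first step is to show $\Psi \in \SS'(\R^d)$. For $\phi \in C_c^\infty(\R^d)$, I would construct a decomposition $\phi = R(\phi) + \sum_{j,\ell}\nb_j\nb_\ell g^\phi_{j\ell}$, where $R : C_c^\infty \to C_c^\infty$ is a fixed $(d+2)$-dimensional projection whose coefficients are linear in the moments $(\int\phi,\, \int x_k\phi,\, \int |x|^2\phi)$, and $g^\phi$ is a symmetric traceless tensor of $L^1$ functions with $\|g^\phi\|_{L^1} \lesssim$ a Schwartz seminorm of $\phi$. The vanishing moments $\int\phi_* = \int x_k\phi_* = \int|x|^2\phi_* = 0$ of $\phi_* := \phi - R(\phi)$ are necessary (by integration by parts against the $(d+2)$ kernel polynomials), and I would build $g^\phi$ via the Fourier formula
\[
\hat g^\phi_{j\ell}(\xi) = -\fr{d}{d-1}\fr{M_{j\ell}(\xi)}{|\xi|^4}\,\hat\phi_*(\xi), \qquad M_{j\ell}(\xi) := \xi_j\xi_\ell - \fr{1}{d}|\xi|^2\de_{j\ell},
\]
possibly augmented by a carefully chosen element of the kernel of $g \mapsto \mbox{div div}\, g$ on traceless tensors in order to smooth out directional discontinuities of the symbol at the origin. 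Once this decomposition is in hand, temperedness follows from
\[
\langle \Psi, \phi\rangle = \langle \Psi, R(\phi)\rangle + \sum_{j,\ell}\langle\mrg{\nb}^2_{j\ell}\Psi,\, g^\phi_{j\ell}\rangle,
\]
where the traceless property of $g^\phi$ kills the $\fr{1}{d}\De\Psi\,\de_{j\ell}$ term in the integration by parts, and the right-hand side is bounded by a Schwartz seminorm of $\phi$ via $\|f\|_{L^\infty}\|g^\phi\|_{L^1}$ and the bound on the coefficients of $R(\phi)$.

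With $\Psi \in \SS'$ established, the log-Lipschitz regularity and growth follow from Fourier analysis of the main identity. Away from the origin $\widehat{\nb_\ell\Psi}(\xi) = -\fr{d}{d-1}(i\xi_j/|\xi|^2)\hat f_{j\ell}(\xi)$, so multiplying by one power of $|\xi|$ shows that $(-\De)^{1/2}\nb_\ell\Psi$ is a Riesz-type Calder\'on--Zygmund operator applied to $f \in L^\infty$. Standard theory then places $\nb\Psi$ in the Zygmund class $\dot B^1_{\infty,\infty}$ modulo a polynomial of degree $\leq 1$, which is equivalent to $\nb\Psi$ being log-Lipschitz with modulus $\omega(r) \lesssim r\log(1/r)$ for small $r$ and $|\nb\Psi(y)| \leq |\nb\Psi(0)| + C|y|\log|y|$ for large $|y|$. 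Integrating $\nb\Psi$ radially via $\Psi(x) - \Psi(0) = \int_0^1 \nb\Psi(tx)\cdot x\, dt$ then yields the growth bounds $|\nb\Psi(x)| = O(|x|\log|x|)$ and $|\Psi(x)| = O(|x|^2\log|x|)$ as $|x|\to\infty$.

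The main obstacle is the $L^1$-bounded construction of $g^\phi$ in the first step. The naive symbol $M_{j\ell}(\xi)\hat\phi_*(\xi)/|\xi|^4$ is bounded but only degree-$0$ homogeneous near $\xi = 0$, producing a directional discontinuity at the origin that generically obstructs $L^1$ membership by Riemann--Lebesgue. Overcoming this requires exploiting the ambiguity in right inverses of $\mbox{div div}$ on traceless tensors to add a kernel correction that cancels this discontinuity; verifying that such a correction always exists, and produces a symbol whose inverse transform lies in $L^1$ with control by a Schwartz seminorm of $\phi$, is the technical heart of the proof.
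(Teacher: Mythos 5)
The paper does not actually prove this theorem here --- it is quoted from the external reference \cite{isettTraceFree} --- so there is no in-paper argument to compare against; the nearest analogue is the Appendix's treatment of $\dot{W}^{2,\infty}$, which runs in the \emph{opposite} order to yours (mollify $\Psi$, prove quantitative pointwise growth bounds for the regularizations after subtracting a kernel element, pass to the limit by Arzel\`{a}--Ascoli, so that growth and temperedness come out together). Your algebraic identities are correct, and your Step 2 is sound once temperedness is in hand: $\De \nb_\ell \Psi = \fr{d}{d-1}\nb_j f_{j\ell}$ gives $\| P_k \nb \Psi \|_{L^\infty} \lesssim 2^{-k} \| f \|_{L^\infty}$, hence Zygmund regularity, the log-Lipschitz modulus, and the stated growth after subtracting an affine function from $\nb\Psi$.

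Your Step 1, however, has two genuine gaps. First, the repair you propose for the $L^1$ failure cannot work. With only the moments against $\{1, x_k, |x|^2\}$ removed, $\hat\phi_*$ retains a nonzero trace-free Hessian $H_{ab}$ at $\xi = 0$, and the directional limit of your symbol along $\xi = t\om$, $|\om| = 1$, is $D_{j\ell}(\om) = -\fr{d}{d-1} M_{j\ell}(\om)\cdot \fr{1}{2} H_{ab}\om^a\om^b$, which satisfies $\om_j \om_\ell D_{j\ell}(\om) = -\fr{1}{2} H_{ab}\om^a\om^b \not\equiv 0$. Any correction $\hat h$ in the kernel of the double divergence obeys $\xi_j\xi_\ell \hat h_{j\ell} = 0$, so $\hat g + \hat h$ still has a direction-dependent limit at the origin and cannot be the Fourier transform of an $L^1$ function. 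The fix is different and simpler: enlarge $R$ to a finite-rank projection killing \emph{all} moments of order $\leq 2$ (harmless, since $R$ need only map into $C_c^\infty$ with coefficients bounded by Schwartz seminorms --- it need not be tied to the kernel of $\mrg{\nb}^2$); then $\hat\phi_* = O(|\xi|^3)$ and a dyadic decomposition of the degree $-2$ homogeneous symbol yields $g^\phi \in L^1$. Second, and more seriously, the duality identity is circular: $g^\phi$ is not compactly supported, so the pairing $\langle \Psi, \nb_j\nb_\ell g^\phi_{j\ell}\rangle$ is precisely what temperedness would license. Cutting off, $\nb_j\nb_\ell(\chi_R g_{j\ell}) = \phi_* + (\mbox{commutator terms supported in } |x| \sim R)$, and those commutator terms are paired with $\Psi$ exactly where you have no control on it yet. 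You must either prove the growth bounds first for mollifications of $\Psi$ (the Appendix's route), or use a soft argument --- a tempered solution $\Phi$ of $\mrg{\nb}^2 \Phi = f$ exists by division of tempered distributions, and $\Psi - \Phi$ lies in the $(d{+}2)$-dimensional polynomial kernel of $\mrg{\nb}^2$, whence $\Psi \in \SS'$ --- before running the duality computation.
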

These estimates are sharp in view of the example $\Psi(x) = |x|^2 \log |x|^2 = 2 |x|^2 \log |x| \in \mrg{W}^{2,\infty}$.  

Two open questions related to this theorem are the following: First, is the space of $C_c^\infty$ functions dense in $\mrg{W}^{2,\infty}$ in the weak-* topology?  (If so, the weak-* continuity and boundedness properties would uniquely characterize the continuous extension of the nonlinearity.) Likewise, is the space $\mrg{W}^{2,\infty}$ complete in the sense of a non-Hausdorff space?  That is, given $\Psi_n$ Cauchy in the semi-norm, does there exist $\Psi \in \mrg{W}^{2,\infty}$ that is the (non-unique) limit of $\Psi_n$?  Completeness is true for the space $\dot{W}^{2,\infty}$ (see the Appendix), but it is not obvious whether completeness holds in $\mrg{W}^{2,\infty}$.

We conclude this introduction with the organization of the paper and some basic definitions.

\subsection{Organization of the paper}

Sections~\ref{sec:testFunctionStep}, \ref{sec:testOnR2} and \ref{sec:hamiltonian} are devoted to proving the conservation of the Hamiltonian stated in Theorem~\ref{thm:OnsagSingularity}.  These sections rely on a divergence form expression that will be a basic tool in proving all of the remaining Theorems of the paper.  This tool is developed in Section~\ref{sec:deriveBilinear}.  In Section~\ref{sec:definingNonlinearity} we derive our new definition of the nonlinearity.  We prove Theorem~\ref{thm:extension} on the continuous extension of the operator in Section~\ref{sec:ctsExtend}.  The special estimates and continuity properties of Theorems~\ref{thm:mSQGbound} and~\ref{thm:mSQGextend} are obtained by establishing a certain divergence form in Section~\ref{sec:mSQGSpecial}.  Section~\ref{sec:mSQGSpecial} also establishes the angular momentum conservation stated in Theorems~\ref{thm:angMoment1} and \ref{thm:angMomentum}.  

Theorem~\ref{thm:sharpXBd} on the sharpness of the mSQG bound is proven in Section~\ref{sec:Sharpness}.  Theorem~\ref{thm:nonlinearCharacterization}, which characterizes the mSQG nonlinearity, is proven in Section~\ref{sec:uniqueness}.

The paper concludes with an Appendix that lays out the important properties of the homogeneous function spaces used in the paper.  

Throughout the paper, the results are obtained in both the periodic setting and the nonperiodic setting wherever it makes sense to do so.  We have made efforts to clarify which setting we consider at each instance where the presentation switches settings.

\paragraph{Acknowledgments}  The first author acknowledges the support of a Sloan fellowship and the NSF grant DMS 2055019.

\subsection{Definitions of basic function spaces} \label{sec:basicfunctionSpaces}

For $X$ a Banach space of distributions on $\T^d$ or $\R^d$, we denote by $L_t^p X$ to be the space of distributions $u$ such that $u$ is strongly measurable in time with values in $X$ and the norm $\| \|u(t)\|_X \|_{L^p}$ is finite.

The Banach space $\dot{H}^{-1 + \de}$ is defined and its properties are studied in detail in the Appendix.

We define Littlewood-Paley projections with the conventions that $P_{\leq 0}$ has a Fourier multiplier $\hat{\chi}_{\leq 0}(\xi)$ that equals $1$ in $|\xi| \leq 1/2$ and has compact support in $|\xi| \leq 1$.  With this convention $P_{\leq q}$ has Fourier multiplier $\hat{\chi}_{\leq q} \equiv \hat{\chi}_{\leq 0}(2^{-q} \xi)$ and we define $P_q \equiv P_{\leq q+1} - P_{\leq q}$, which has a Fourier multiplier $\hat{\chi}_q$ localized to $2^{q-1} \leq |\xi| \leq 2^{q+1}$. 

We say that a distribution $u$ belongs to the homogeneous Besov space $\dot{B}^\a_{p, c(\N)}$ if there exists a sequence of $C_c^\infty$ functions $u_n$ such that $\| u - u_n \|_{\dot{B}^\a_{p, c(\N)}} \to 0$, where
\ALI{
\| u \|_{\dot{B}^\a_{p, c(\N)}} &= \sup_q 2^{\a q} \| P_q u \|_{L^p}.
}
Distributions in this class inherit properties from test functions such as $\| P_q u \|_{L^p} = o(2^{-\a q})$ as $q \to \infty$.

\section{Hamiltonian conservation: The test function step} \label{sec:testFunctionStep}

\newcommand{\lh}[1]{\left \| #1 \right \|_{L^3_t \dot{H}^{-1 + \delta}_x}}


%
The first step in the proof of Hamiltonian conservation is to justify multiplying the equation by a test function that is not necessarily smooth with compact support.  This step is less technical on the torus as opposed to the full Euclidean space, so we concentrate on the $\T^2$ case first.  We explain the additional technicalities for the Euclidean case in Section~\ref{sec:testOnR2}.

Let $\th$ be an mSQG solution with $\delta \in [0, 1/2]$ fixed and $\th \in L_t^3 \dot{H}^{-1+\de}$ with $\Lambda^{-1 + \delta}\th \in L^3_t B^\a_{3, \infty}$ for $\a \geq (1 + \delta)/3$.  Note that on the torus the assumption $\th \in L_t^3 \dot{H}^{-1 + \de}$ follows from our other assumption on $\La^{-1+\de} \th$.  Indeed, if $\Lambda^{-1 + \delta}\th \in L^3_t B^\a_{3, \infty}$ for $\a > 0$ then $\th \in L^3_t \dot{H}^{-1 + \delta}_x$ due to the Littlewood-Paley inequality
\[
	\|\th \|_{\dot{H}^{-1 + \delta}(\T^2)}^2 \sim
	\sum_{q \geq 0}\|2^{(-1 +\delta)q}P_q \th \|^2_{L^2(\T^2)} 
	\leqc \sum_{q\geq 0} \left ( 2^{-\a q}\| \La^{-1 +\delta}\th \|_{B^\a_{3, \infty}(\T^2)} \right )^2
	\leqc
	\| \La^{-1 +\delta}\th \|_{B^\a_{3, \infty}(\T^2)}^2
\]  
We refer to Section~\ref{sec:basicfunctionSpaces} for the definitions of the operators $P_q$ and the Besov space $B^\a_{3, \infty}$.

Define $\phi(t) \in C_c^\infty(\R)$ to be a radial bump function of unit integral and let $\phi_\tau(t) = \tau^{-1} \phi(\tau^{-1}t)$ be a mollifying kernel in time. In addition, let $\eta(t) \in C_c^\infty(I)$ be a test function in time.  
Define the test function
\[
	\psi_{\hat{q}, \tau} \coloneqq \left (  \eta(t) \left ( P_{\leq \hat q}^2\La^{2(-1 + \delta)}\th \right )_\tau \right )_\tau \in C^\infty_c(\T^2 \times \R)
\]
where $(\cdot )_\tau$ denotes mollification in time with $\phi_\tau$. 


We may safely apply $\psi_{\hat{q}, \tau}$ as a test function to mSQG in order to obtain
\ali{\label{eq: mSQG with test function}
	- \int_\R\int_{\T^2} \th  \pr_t \psi_{\hat{q}, \tau} \, dxdt - \int B[\psi_{\hat{q}, \tau}(t), \th(t), \th(t)] dt  = 0
}
where $B[\psi_{\hat{q}, \tau}, \th, \th] \coloneqq \int_{\T^2} u^l \th \nb_l \psi_{\hat{q}, \tau} \, \textrm{`}dx\textrm{'}$ is the nonlinear term defined in Definition~\ref{defn:nonlinear}.  Using the self-adjointness of $P_{\leq \hq}^2$ and $( \cdot)_\tau$, we have that
\ALI{
\frac 12 \int_{\R \times \T^2} \eta'(t)\cdot \left | \left ( \La^{-1 + \delta}P_{\leq \hat{q}}\th \right )_\tau  \right |^2 \, dxdt 
&= \int_\R\int_{\T^2} \th  \pr_t \psi_{\hat{q}, \tau} \, dxdt 
}

By the definition of a weak solution, we get 
\ali{
\label{eq: d/dt time mollified Hamiltonian}	
	\frac 12 \int_{\R \times \T^2} \eta'(t)\cdot \left | \left ( \La^{-1 + \delta}P_{\leq \hat{q}}\th \right )_\tau  \right |^2 \, dxdt 
	=\int_{\R \times \T^2} u^l \th \nb_l  \left ( \eta \left  ( P_{\leq \hat q}^2 \La^{2(-1 + \delta)}\th \right )_\tau \right )_\tau \, \textrm{`}dxdt\textrm{'}
}
where the formal integral on the right hand side is interpreted in terms of the linear-quadratic form for mSQG as 
\ALI{
\int B\left[ \left ( \eta \left  ( P_{\leq \hat q}^2 \La^{2(-1 + \delta)}\th \right )_\tau \right )_\tau, \th(t), \th(t)\right] dt
}

We would like to take a limit of $\tau \rightarrow 0$ of both sides of \eqref{eq: d/dt time mollified Hamiltonian}. To do this we use the next Lemma. 
\begin{lem}\label{lem: time mollifcation convergence}
\[
	\psi_{\hat{q}, \tau} \longrightarrow \eta(t) \La^{2(-1 + \delta)}P_{\leq \hat{q}}^2 \th \eqqcolon \psi_{\hat{q}} \text{ in }L^3_t\dot{W}^{2,\infty} \text{ as }\tau \rightarrow 0
\]
\end{lem}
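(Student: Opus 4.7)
My plan is to reduce the lemma to the standard convergence theorem for Friedrichs mollifiers in a Bochner space. The essential observation is that, by the frequency localization, the function
\[
g(t,x) \coloneqq P_{\leq \hat{q}}^2 \La^{2(-1+\de)} \th(t,x)
\]
is very smooth in $x$: on $\T^2$ its Fourier support is a fixed finite set of frequencies $|\xi|\leq 2^{\hat q+1}$, so $g(t,\cdot)$ lies in a finite-dimensional subspace of $C^\infty(\T^2)$. Consequently Bernstein's inequality yields
\[
\|\nab^2 g(t,\cdot)\|_{L^\infty(\T^2)} \lsm_{\hat q} \|g(t,\cdot)\|_{L^2(\T^2)} \lsm_{\hat q} \|\La^{-1+\de}\th(t,\cdot)\|_{L^2(\T^2)},
\]
and the hypothesis $\th\in L^3_t \dot H^{-1+\de}_x$ shows that $g\in L^3_t \dot W^{2,\infty}_x$. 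The finite-dimensionality also removes any measurability concerns arising from the non-separability of $L^\infty_x$.

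Next I will invoke the standard fact that if $f\in L^p(\R;X)$ with $X$ a Banach (or semi-normed) space and $1\leq p<\infty$, then $\phi_\tau * f\to f$ in $L^p(\R;X)$ as $\tau\to 0$, and moreover $\|\phi_\tau * f\|_{L^p(\R;X)}\leq \|f\|_{L^p(\R;X)}$ by Young's inequality. Taking $X=\dot W^{2,\infty}(\T^2)$ equipped with the seminorm $\|\nab^2\cdot\|_{L^\infty_x}$, and applying this to $f=g$, I obtain $(g)_\tau\to g$ in $L^3_t\dot W^{2,\infty}_x$. The same statement with $f=\eta g$ shows $(\eta g)_\tau\to \eta g$ in the same space, using that $\eta g\in L^3_t\dot W^{2,\infty}_x$ (since $\eta\in C_c^\infty(I)$ depends only on $t$ and $\nab^2$ acts only in $x$).

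To conclude, I will write
\[
\psi_{\hat q,\tau}-\psi_{\hat q}=\bigl(\eta\bigl[(g)_\tau-g\bigr]\bigr)_\tau + \bigl((\eta g)_\tau-\eta g\bigr).
\]
The outer term tends to zero in $L^3_t\dot W^{2,\infty}_x$ by the previous paragraph. For the inner term, since $\eta$ depends only on time, $\|\eta[(g)_\tau-g]\|_{L^3_t\dot W^{2,\infty}_x}\leq \|\eta\|_{L^\infty_t}\|(g)_\tau-g\|_{L^3_t\dot W^{2,\infty}_x}\to 0$, and the subsequent outer mollification is bounded by Young. Adding these estimates yields the desired convergence.

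The only conceivable obstacle is the interaction between the non-separable $L^\infty_x$-valued Bochner theory and the semi-norm nature of $\dot W^{2,\infty}$. Both issues are neutralized by the frequency cutoff $P_{\leq \hat q}^2$, which places $g(t,\cdot)$ in a fixed finite-dimensional space of trigonometric polynomials on $\T^2$, so the standard scalar-valued theory of mollifiers in $L^3$ suffices once the seminorm bound on $\nab^2 g$ has been established.
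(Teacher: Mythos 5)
Your proof is correct and follows essentially the same route as the paper: commute the fixed smoothing operator $P_{\leq\hat q}^2\La^{2(-1+\de)}$ past the time mollifications, and reduce to convergence of Friedrichs mollifiers in a Bochner $L^3$ space. The only presentational difference is that the paper performs the approximation argument in the separable Hilbert space $L^3_t\dot H^{-1+\de}_x$ and transfers to $\dot W^{2,\infty}$ at the end via the bounded operator, whereas you carry it out directly in $L^3_t\dot W^{2,\infty}_x$ after noting that the frequency truncation confines $g(t,\cdot)$ to a fixed finite-dimensional space of trigonometric polynomials, which neatly disposes of the strong-measurability and non-separability concerns one would otherwise have for an $L^\infty$-valued Bochner target.
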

\begin{proof}
For each fixed $\hat{q}$ we note that $\La^{2(-1 +\delta)}P^2_{\leq \hat{q}}: L^3_t\dot{H}^{-1 + \delta}_x \longrightarrow L^3_t \dot{W}^{2,\infty}$ is a bounded linear operator with an operator norm that depends on $\hat{q}$. So
\[
	\| \psi_{\hat{q}, \tau} - \psi_{\hat{q}}  \|_{L^3_t \dot{W}^{2,\infty}}
	=
	\left \| \La^{2(-1 + \delta)}P_{\leq \hat{q}}^2 \left ( (\eta(t) \th_\tau )_\tau - \eta(t) \th \right )\right \|_{L^3_t\dot{W}^{2,\infty}}	
	\leqc_{\hat{q}}
	\left \| (\eta(t) \th_\tau )_\tau - \eta(t) \th \right \|_{L^3_t \dot{H}^{-1 +\delta}_x}
\]
Hence, it is enough to show 
\[
	\lim_{\tau \rightarrow 0} \left \| (\eta(t) \th_\tau )_\tau - \eta(t) \th \right \|_{L^3_t \dot{H}^{-1 +\delta}_x} = 0
\]
We proceed via an approximation argument. Define $\psi \coloneqq \eta(t) \th$.  Banach valued simple functions are dense in $L^3_t\dot{H}^{-1 + \delta}_x$ 
\cite{grafakos2014classical} and \cite{potter}.  Using this face, we can approximate $\psi$ in the $L^3_t\dot{H}^{-1 + \delta}_x$-norm by a smooth in time approximation $f \in C_c^\infty(\R ; \dot{H}^{-1 + \delta}(\T^2))$ 
\cite[Chapter 4, Theorem 4.11]{potter}. Given an $\ep > 0$, take $f \in C_c^\infty(\R ; \dot{H}^{-1 + \delta}(\T^2))$ such that 
$\lh{ \th - f} < \epsilon$.  Define $\psi_{\tau, \tau} \coloneqq \left (\eta(t) \th_\tau \right )_\tau$ and $f_{\tau, \tau} \coloneqq \left (\eta(t) f_\tau \right )_\tau$ and consider the inequality
\ali{
\label{eq: Bochner approx}
	\lh{\psi_{\tau, \tau} - \psi} 
	& \leq
	\lh{\psi_{\tau, \tau} - f_{\tau, \tau}} 
	+ \lh{f_{\tau, \tau} - \eta f}
	+ \lh{\eta f - \psi}	
	\\
	\nonumber
	&\coloneqq
	I + II + III 
}
We will discuss how to estimate each term on the RHS of \eqref{eq: Bochner approx}. Beginning with $I$, $\lh{\psi_{\tau, \tau} - f_{\tau, \tau}} <\ep$
due to a modified argument of the proof that convolutions are bounded linear operators in $L^p$-norms. 

For $II$,  $\lim_{\tau \rightarrow 0} \lh{f_{\tau, \tau} - \eta f} = 0$
by a dominated convergence argument and observing that for sufficiently small $\tau$, the family of functions $\{f_{\tau, \tau}\}$ can yield a sequence of uniformly bounded functions converging to $\eta f$ in $\dot{H}^{-1+\de}$ pointwise uniformly in time. 

Finally for $III$,  $\lh{\eta f - \psi}	\leq \| \eta\|_{L^\infty(\R)} \lh{f - \th} < \epsilon$
by Young's inequality and our assumption $ \lh{f - \th} < \epsilon$. By combining the estimates of $I, II$, and $III$ and taking a limit of \eqref{eq: Bochner approx} we get
$
	\limsup_{\tau \rightarrow 0} 
	\lh{\psi_{\tau, \tau} - \psi} \leq 
	\limsup_{\tau \rightarrow 0} \left ( 2\ep + \lh{f_{\tau, \tau} - \eta f} \right )
	\leq 
	2\ep
$. Since this inequality holds for any $\ep > 0$ we conclude that 
$
	\lim_{\tau \rightarrow 0} 
	\lh{\psi_{\tau, \tau} - \psi} = 0
$
\end{proof}
Next, by the assumption $\th \in L^3_t B^\a_{3, \infty} \subseteq L^3_t \dot{H}^{-1 + \delta}_x$, Lemma \ref{lem: time mollifcation convergence}, and the bound on the nonlinear term given 
in Theorem~\ref{thm:oddBound} we may safely take a limit of $\tau \rightarrow 0$ of line \eqref{eq: d/dt time mollified Hamiltonian}.  Formally, the identity we have established is 
\begin{equation}
	\label{eq: d/dt Hamiltonian}	
	\int_\R \int_{\T^2} \eta'  \left( P_{\leq \hat q} \La^{-1 + \delta}\th \right )^2 dxdt
	=
	-\int_\R \int_{\T^2} u^l \th \nb_l  \left ( \eta P_{\leq \hat q}^2 \La^{2(-1 + \delta)}\th \right ) \textrm{`dxdt'} 
\end{equation}
where in the line above we define
\ali{\label{defn: psi_q}
	\psi_{\hat{q}} \coloneqq \eta(t) P_{\leq \hat q}^2 \La^{2(-1 + \delta)}\th.
}
Rigorously the right hand side of \eqref{eq: d/dt Hamiltonian} must be interpreted as 
\ALI{
-\int_\R \int_{\T^2} u^l \th \nb_l  \left ( \eta P_{\leq \hat q}^2 \La^{2(-1 + \delta)}\th \right ) \textrm{`dxdt'} \coloneqq \int B(\psi_{\hat{q}}(t), \th(t), \th(t)) dt
}
where $B$ is the linear-quadratic form that is bounded on $\dot{W}^{2,\infty} \times \dot{H}^{-1+\de} \times \dot{H}^{-1+\de}$.

We observe that the left hand side of \eqref{eq: d/dt Hamiltonian} is an approximation of the weak derivative of the Hamiltonian, $\frac{d}{dt}H(t)$,  against the smooth bump function in time $-\eta(t)$ and we observe that, by dominated convergence,
\[
	\lim_{\hat q \rightarrow \infty} \int_\R \int_{\T^2} \eta'  \left( P_{\leq \hat q} \La^{-1 + \delta}\th \right )^2 dxdt
	=
	\int_\R \int_{\T^2} \eta'  \left(  \La^{-1 + \delta}\th \right )^2 dxdt
\]
Our goal is thus to show 
$
	\lim_{\hat q \rightarrow \infty}\int_{\R}\eta '(t) \int_{\T^2} \left( P_{\leq \hat q} \La^{-1 + \delta} \th \right )^2 dx dt = 0
$. In other words, we would like to prove that the RHS of \eqref{eq: d/dt Hamiltonian} obeys the limit
\ali{\label{eq: limit in q of B}
\lim_{\hat q \rightarrow \infty}  \int B(\psi_{\hat{q}}(t), \th(t), \th(t)) dt = 0
}
To investigate this limit we will make use of an approximating sequence.

\section{The test function step in Euclidean space} \label{sec:testOnR2}

In this section we show how to generalize the analysis of Section~\ref{sec:testFunctionStep} from the torus to the case of $\R^2$.  The reader interested mainly in the periodic case may wish to skip to Section~\ref{sec:hamiltonian} upon first reading.

We assume $\th \in L_t^3 \dot{H}^{-1+\de}$.  We start by observing that, as in the periodic case
\ali{
\int \eta'(t) \int \left| \La^{-1+\de} \th(t,x)\right|^2 dx dt &= \lim_{\hq \to \infty} \lim_{\tau \to 0} \int \eta'(t) \int \left| P_{\leq \hq}^2 (\La^{-1+\de} \th(t,x))_\tau\right|^2 dx dt  \label{eq:sameAsT2}
}
by a simple approximation argument using density of $\dot{H}^{-1+\de}$-valued simple functions of time.  

We now deviate from the periodic case to write
\ali{
\eqref{eq:sameAsT2} &= \lim_{\hq \to \infty} \lim_{\tau \to 0} \lim_{m \to \infty} \int \eta'(t) \int \left| P_{(-m, \hq]} (\La^{-1+\de} \th(t,x))_\tau\right|^2 dx dt \label{eq:diffFromT2}
}
where $P_{(-m,\hq]} \coloneqq P_{\leq \hq}^2 - P_{\leq -m}$ is a truncation of $P_{\leq \hq}^2$.   Again we have applied dominated convergence using that $\th \in L_t^2 \dot{H}^{-1+\de}$.  

Our next step is to introduce a physical space cutoff $\chi_R(x) = \chi(x/R)$, where $\chi(x) = 1$ on the unit ball and $\chi \in C_c^\infty$.  We claim that
\ali{
\eqref{eq:diffFromT2} &= \lim_{\hq \to \infty} \lim_{\tau \to 0} \lim_{m \to \infty} \lim_{R \to \infty} \int \eta'(t) \int \left| P_{(-m, \hq]} \La^{-1+\de} [\chi_R \th(t,x)_\tau]\right|^2 dx dt \label{eq:diffFromTorusAgain}
}
To prove this claim, we decompose the function on the right hand side as
\ALI{
\Pmhq \La^{2(-1+\de)}[ \chi_R \th_\tau ] &= I + II + III + IV \\
I &= \Pmhq \La^{2(-1+\de)} [P_{\leq -m-2} \chi_R P_{\leq -m-2} \th_\tau] \\
II &= \Pmhq \La^{2(-1+\de)} [P_{(-m-2, \hq+2]} \chi_R P_{\leq -m - 2} \th_\tau] \\
III &= \Pmhq \La^{2(-1+\de)} [P_{\leq \hq + 2} \chi_R P_{(-m-2, \hq+2]} \th_\tau] \\
IV &= \Pmhq \La^{2(-1+\de)} \sum_{k \geq \hq + 1} [P_{k+1} \chi_R P_k \th_\tau + P_k \chi_R P_{k+1} \th_\tau + P_{k+1} \chi_R P_{k+1} \th_\tau]
}
Note that $IV$ contains all the terms involving high frequencies since, if $k \geq \hq+1$, we have that $P_{k+1} \chi_R P_n \th_\tau  $ and $P_{k+1} \th_\tau P_n \th_\tau$ have frequency support disjoint from that of $\Pmhq^2$ for $n < k-1$.

By frequency support considerations $I = 0$, so it suffices to take the limit in $L^2$ of the other  terms.

We start by estimating $II$.  All estimates are taken at a fixed time
\ALI{
\| II(t) \|_{L_x^2} &\lesssim_{m, \hq}  \| P_{(-m-2, \hq+2]} \chi_R \|_{L^\infty} \| P_{\leq -m - 2} \th_\tau \|_{L^2} \\
&\lesssim_{m, \hq}\| P_{(-m-2, \hq+2]} \chi_R \|_{L^\infty} \| \th_\tau(t) \|_{\dot{H}^{-1+\de}}
}
Furthermore, as $R \to \infty$, we have 
\ALI{
\| P_{(-m-2, \hq+2]} \chi_R \|_{L^\infty} &\lesssim_{m,\hq} \| \nb \chi_R \|_{L^\infty} \to 0 \qquad \mbox{ as } R \to \infty
}
and we conclude $II$ converges to $0$ in $L^2_x$ at any time $t$, with $L^2$ norms bounded uniformly by a square-integrable function of $t$.

We now consider the limit of $III$, which is the only term that does not vanish in the limit.  As $\hq$ is fixed, the functions
\ALI{
P_{\leq \hq + 2} \chi_R(x) &= \int \chi_R(y) \eta_{\leq \hq + 2}(x-y) dy
}
are uniformly bounded in $L^\infty$ and converge pointwise (but not uniformly) in $x$ to $1 = \int \eta_{\leq \hq + 2} dy$ as $R \to \infty$.  Using dominted convergence and the $L^2$ boundedness of $\Pmhq \La^{2(-1+\de)}$, it follows that
\ALI{
III(t) &\to \Pmhq \La^{2(-1+\de)} P_{(-m-2, \hq+2]} \th_\tau = \Pmhq \La^{2(-1+\de)} \th_\tau
}
in $L^2_x$ for any time $t$, with $L^2$ norms bounded uniformly by a square-integrable function of $t$.

Finally we estimate $IV$ as follows.  Using the fact that $\Pmhq \La^{2(-1+\de)}$ is bounded on $L^2$ we have
\ALI{
\| IV \|_{L_x^2} &\lesssim_{m,\hq} \sum_{k \geq \hq} \| P_{\approx k} \chi_R \|_{L^\infty} \| P_{\approx k} \th_\tau \|_{L^2_x} \\
&\lesssim_{m,\hq} \sum_{k \geq \hq} \| P_{\approx k} \chi_R \|_{L^\infty}   2^{(1-\de)k}\| \La^{-1 + \de} P_{\approx k} \th_\tau \|_{L^2_x} \\
&\lesssim_{m,\hq} \sum_{k \geq \hq} 2^{-3k} \| \nb^3 \chi_R \|_{L^\infty}   2^{(1-\de)k}\| \th_\tau(t) \|_{\dot{H}^{-1+\de}}
}
Since $\nb^3 \chi_R \to 0$ uniformly, we have that $IV$ converges to $0$ in $L_x^2$ at any time $t$, with $L^2$ norms uniformly bounded by a square-integrable function of $t$.

This analysis together with the dominated convergence theorem justifies the claim \eqref{eq:diffFromTorusAgain}.  We now observe that, for each fixed value of $\hq, \tau, m$ and $R$, we have
\ali{
- \fr{1}{2} \int \eta'(t) \int &\left| P_{(-m, \hq]} (\La^{-1+\de} \chi_R \th(t,x))_\tau\right|^2 dx dt = - \int \th \pr_t \psi_{\hq,\tau,m,R} dx dt \\ 
\psi_{\hq,\tau,m,R} &= \chi_R \left( \eta \Pmhq^2 \La^{2(-1+\de)}(\chi_R \th_\tau) \right)_\tau. \label{eq:phewLimit}
}
using the self-adjointness of both the mollification operators involved and of multiplication by $\chi_R$.  Note that $\psi_{\hq,\tau,m,R} \in C_c^\infty(\R \times \R^2)$ is a smooth test function with compact support, which enables us to apply the definition of a weak solution to write the right hand side as
\ALI{
\eqref{eq:phewLimit} &= \int B(\psi_{\hq,\tau,m,R}(t), \th(t), \th(t)) dt.
}
We now aim to take the limits as $R \to \infty, m \to \infty$ and $\tau \to \infty$ in that order.  We first claim that 
\ali{
\psi_{\hq,\tau,m,R} \to \psi_{\hq,\tau,m} \coloneqq \left( \eta \Pmhq \La^{2(-1+\de)}(\th_\tau) \right)_\tau  \qquad \mbox{ in } L_t^3 \dot{W}^{2,\infty}, \label{eq:Rlimit} 
}
which is sufficient given that $\th(t) \in L_t^3 \dot{H}^{-1+\de}$ and the $\dot{W}^{2,\infty} \times \dot{H}^{-1+\de} \times \dot{H}^{-1+\de}$ boundedness of $B$.

To take this limit, first observe that the convergence
\ali{
\eta \Pmhq \La^{2(-1+\de)}(\chi_R \th_\tau) \to \eta \Pmhq \La^{2(-1+\de)}(\th_\tau)   \label{eq:multiplyThisOne}
}
holds in $L_t^3L_x^2$ by the same argument used to take the $R \to \infty$ limit in \eqref{eq:diffFromTorusAgain}.  The convergence also holds in the $L_t^3 C_x^2$ norm by the boundedness of the frequency localization operator $\Pmhq = P_{(-m-2, \hq+2]} \Pmhq$.  

Recalling that the $\dot{W}^{2,\infty}$ semi-norm is $\| \nb^2 f \|_{L^\infty}$, it is not hard to check that the second mollification in time preserves the $L_t^3 \dot{W}^{2,\infty}$ convergence, as does multiplying by $\chi_R$ and letting $R \to \infty$.  This observation suffices to take the $R \to \infty$ limit in \eqref{eq:Rlimit}.

We now study the $m \to \infty$ limit where we re-insert the low frequencies.  Observe that, by the Berenstein inequality,
\ALI{
\| \psi_{\hq,\tau,m}(t) - \psi_{\hq,\tau}(t) \|_{\dot{W}^{2,\infty}} &= \| \nb^2 P_{\leq-m} \La^{-1 + \de} \La^{-1 + \de} \th_\tau \|_{L^\infty} \\
&\lesssim 2^{-2m/2 } \| \nb^2 P_{\leq -m} \La^{-1 + \de} \La^{-1 + \de} \th_\tau \|_{L^2} \\
&\lesssim 2^{-m} \| \nb^2 P_{\leq -m} \La^{-1 + \de} \|_{L^2 \to L^2} \| \La^{-1 + \de} \th_\tau \|_{L^2} \\
&\lesssim 2^{-m} 2^{-(1+\de)m} \| \La^{-1+ \de} \th_\tau(t) \|_{L^2}
}
The bound for $\| \nb^2 P_{\leq -m} \La^{-1 + \de} \|_{L^2 \to L^2}$ used in the last line can be shown by a dyadic decomposition and summing a geometric series.  This estimate and an application of the dominated convergence theorem suffice for the $L_t^3 \dot{W}^{2,\infty}$ convergence.  (In the case $\de = 0$ of $2D$ Euler, we employ the bound $\| \La^{-1} \th_\tau(t) \|_{L^2} \lesssim  \| v^i \|_{L^2} \approx \| \th_\tau(t) \|_{\dot{H}^{-1}} $, where $v$ is the Helmholtz solution to $\nb_i v^i = \th_\tau$, and we use that $\La^{-1} \nb_i$ is $L^2$ bounded.  See Appendix Section~\ref{sec:hdots}.)

Next we claim that the convergence
\ALI{
\psi_{\hq,\tau} = \left( \eta P_{\leq \hq}^2 \La^{2(-1+\de)}(\th_\tau) \right)_\tau \to \psi_{\hq} = \eta P_{\leq \hq}^2 \La^{2(-1+\de)} \th \quad \mbox{in } L_t^3 \dot{W}^{2,\infty} \mbox{ as } \tau \to 0
}
for $\th \in L_t^3 \dot{H}^{-1+\de}$.  The proof follows a similar line of reasoning as in Section~\ref{sec:testFunctionStep}, but we now use the fact that $\nb^2 P_{\leq \hq}^2 \La^{-1 + \de}$ is bounded from $L_t^3L^2_x(I \times \R^2)$ to $L_t^3 L^\infty_x(I \times \R^2)$, which again can be proved by a dyadic decomposition.  

We now proceed with the proof of Hamiltonian conservation, which is primarily a high frequency analysis.

\section{Conservation of the Hamiltonian} \label{sec:hamiltonian}

We now return to the setting of $\T^d$ periodic solutions and explain the modifications needed in the nonperiodic setting in Section~\ref{sec:EuclideanAgain}.

By the convention laid out in Section~\ref{sec:basicfunctionSpaces}, $P_{\leq q}$ has a multiplier that equals $1$ for $|\xi| \leq 2^{q-1}$ and is supported in $|\xi| < 2^{q}$.  Consequently, $P_q \equiv P_{\leq q+1} - P_{\leq q}$ has a multiplier supported in $2^{q-1} \leq |\xi| \leq 2^{q+1}$.

Let $\th_N = P_{\leq N} \th$ and $\psi_{\hq} \equiv \De^{-1+\de} P_{\leq \hq}^2 \th$ (recall that $\De = \La^2$).  By the boundedness of $B$ on $\dot{W}^{2,\infty} \times \dot{H}^{-1+\de} \times \dot{H}^{-1+\de}$, and by dominated convergence, we have that
\ali{
\int B(\psi_{\hat{q}}(t), \th(t), \th(t)) dt = \lim_{N\to \infty} \int B(\psi_{\hat{q}}(t), \th_N(t), \th_N(t)) dt  \label{eq:freqTrunTh}
}
However, the right hand side (at least in the case of the torus) is a classical integral.  Specifically, we are looking at 
\ali{
B_{\hq N} = B(\psi_{\hq}; \th_N, \th_N) &\equiv \int \eta(t) \int T^\ell(\theta_N) \th_N \nb_\ell \psi_{\hq} dx dt \\
&= \int \eta(t) \int \nb_\ell (-\De)^{-1+\de} P_{\leq \hq}^2 \th P_{\leq N} \th \ep^{\ell a} \nb_a (-\De)^{-1+\de} P_{\leq N} \th dx dt \label{eq:integralForBpsihq}
}

We can assume that the mean value $\Pi_0 \th$ of $\th_N$ is zero, for in general we may write $\th = \Pi_0 \th + \Pi_{\neq 0} \th$, where $T^\ell(\Pi_0 \th) = 0$ (since the multiplier is odd), and 
\ali{
B(\psi_{\hq}; \th_N, \th_N) &=  \int \eta(t) \int T^\ell(\Pi_{\neq 0} \theta_N) \Pi_0 \th_N \nb_\ell \psi_{\hq} dx dt + \int \eta(t) \int T^\ell(\Pi_{\neq 0} \theta_N) \Pi_{\neq 0} \th_N \nb_\ell \psi_{\hq} dx dt. \label{eq:no0mode}
}
The first integral vanishes as $\Pi_0 \th_N$ is constant, $T^\ell(\Pi_{\neq 0} \th_N)$ is divergence free, and there is no boundary.

The part of \eqref{eq:integralForBpsihq} from $\ep^{\ell a} \nb_\ell (-\De)^{-1+\de} P_{\leq \hq}^2 \th \nb_a (-\De)^{-1+\de} P_{\leq \hq}^2 \th$ also vanishes, leaving us with
\ali{
B(\psi_{\hq}; \th_N, \th_N) &= \int \eta(t) \int \nb_\ell (-\De)^{-1 +\de} P_{\leq \hq}^2 \th P_{\leq N} \th T_{> \hq}^\ell(P_{\leq N} \th) dx dt \label{eq:lowFreqGone} \\
T_{> \hq}^\ell(f) &\equiv (1 - P_{\leq \hq}^2) T^\ell f = (1 - P_{\leq \hq}^2) \ep^{\ell a} \nb_a (-\De)^{-1+\de} f \notag
}
We remark here that there is an additional complication related to low frequencies in the case of $\R^2$.  We address this complication in Section~\ref{sec:EuclideanAgain}.

Since $T_{> \hq}$ restricts to frequencies above $2^{\hq-1}$, we have
\ALI{
B_{\hq N} = \sum_{q = \hq - 2}^N \de_q B_{\hq N}, \quad \de_q B_{\hq N} = \int \eta(t) \int \nb_\ell \psi_{\hq}(P_{\leq q+1} \th T_{> \hq}^\ell P_{\leq q+1} \th - P_{\leq q} \th T_{> \hq}^\ell P_{\leq q} \th ) dx dt.
}
Indeed, $P_{\leq q} \th T_{> \hq}^\ell P_{\leq q} \th = 0$ for $q < \hq - 2$.  Now decompose $\de_q B_{\hq N} = \de_{q H1} B_{\hq N} +\de_{q H2} B_{\hq N}  +\de_{q L} B_{\hq N}$,
\ali{
\de_{q H1} B_{\hq N} &= \int \eta(t) \int \nb_\ell \psi_{\hq} \left( P_{q+1} \th T_{> \hq}^\ell P_{q+1} \th \right) dx dt \label{eq:deqH1}\\
\de_{q H2} B_{\hq N} &= \int \eta(t) \int \nb_\ell \psi_{\hq} \left( P_{q+1} \th T_{> \hq}^\ell P_{q} \th + P_q \th T_{> \hq}^\ell P_{q+1} \th \right) dx dt \label{eq:deqH2} \\
\de_{qL} B_{\hq N} &= \int \eta(t) \int \nb_\ell \psi_{\hq} \underbrace{\left(P_{q+1} \th T_{> \hq}^\ell P_{\leq q-1} \th + P_{\leq q-1} \th T_{> \hq}^\ell P_{q+1}\th \right) }_{Q_{qL}(\th)} dx dt \label{eq:deqL}
}
Observe that, since $\Fsupp P_{q+1} \th \subseteq \{ 2^{q} \leq |\xi| \leq 2^{q+2} \}, \Fsupp P_{\leq q-1} \th \subseteq \{ |\xi| < 2^{q-1} \}$, we have 
\ali{
\Fsupp\, Q_{qL}(\th) &\subseteq \{ 2^{q -1} \leq |\xi| \leq 2^{q+3} \} \label{eq:FsuppQqL}
}
In particular, $\de_{q L} B_{\hq N} = 0$ for $q > \hq + 3$ since $\Fsupp \psi_{\hq} \subseteq \{ |\xi| < 2^{\hq+1}\} $.

To estimate $\de_{qL} B_{\hq N}$, write $Q_{qL}(\th) = \nb_j \nb^j \De^{-1} Q_{qL}(\th)$ and integrate by parts
\ali{
\de_{qL} B_{\hq N} &= - \int \eta(t) \int \nb_j \nb _\ell \psi_{\hq} \De^{-1} \nb^j Q_{qL}(\th) dx dt. \label{eq:IBPLterm}
}
Let $t$ be a time for which $\La^{-1 +\de} \th$ belongs to $B_{3,c(\N)}^\a$.  Then
\ALI{
\| \nb_j \nb_\ell \psi_{\hq}(t) \|_{L^3_x} &= \|\nb_j \nb_\ell (-\De)^{-1+\de} P_{\leq \hq}^2 \th(t) \|_{L_x^3} \lesssim \sum_{q = -\infty}^{\hq + 1} \| \nb_j \nb_\ell \La^{-1+\de} P_q P_{\leq \hq}^2 \La^{-1+\de} \th \|_{L_x^3} \\
&\lesssim \sum_{q = -\infty}^{\hq + 1} \| \nb_j \nb_\ell \La^{-1+\de} P_{\approx q} \| \cdot \| P_q \La^{-1+\de} \th \|_{L_x^3}
}
Here we recall that we write $\| C \ast \| \equiv \| C \|_{L^1(\R^d)}$ for a convolution operator $C\ast$, and that $P_{\approx q}$ has a Fourier multiplier that is a smooth bump function adapted to $|\xi| \sim 2^q$.  Thus
\ali{
\| \nb_j \nb_\ell \psi_{\hq}(t) \|_{L^3_x} &\lesssim \sum_{q = -\infty}^{\hq + 1} 2^{(2 - 1 +\de) q} 2^{-\a q} \| \La^{-1+\de}\th(t) \|_{\dot{B}_{3,\infty}^\a} \lesssim 2^{(1 +\de - \a) \hq} \| \La^{-1+\de} \th(t) \|_{\dot{B}_{3,\infty}^\a}. \label{eq:boundForNb2psi}
}
since $1 +\de - \a > 0$ for $ \a \leq (1+\de)/3$.  Furthermore, for any $t$ such that $\La^{-1+\de} \th \in B_{3, c(\N)}^\a$, we have
\ali{
\| \nb_j \nb_\ell \psi_{\hq}(t) \|_{L^3_x} &= \| \nb_j \nb_\ell (-\De)^{-1+\de} P_{\leq \hq}^2 \th (t)\|_{L_x^3} = o(2^{(1 +\de - \a) \hq}) \qquad \mbox{ as } \hq \to \infty \label{eq:littleOhBound}
}
by a simple approximation argument.  The key points are the uniform in $\hq$ boundedness of the operators $2^{-(1 +\de - \a) \hq} \nb_j \nb_\ell \La^{-1 +\de}$ on $B_{3,\infty}^\a$, and the fact that $\La^{-1 +\de} \th(t)$ is the strong limit in $B_{3,\infty}^\a$ of $\La^{-1 +\de} f_{(k)}$, where the $f_{(k)}$ are smooth and satisfy an $O(1)$ estimate as $\hq \to \infty$.

 Meanwhile, since we can take $\th$ to have zero mean, we have $P_{\leq q} \th = \sum_{r < q} \La^{1-\de} P_r \La^{-1 +\de} \th$.  We obtain using $1 - \de - \a \geq 2/3 - 4\de/3 > 0$ for $\de < 1/2$
\ALI{
\| P_{\leq q} \th(t) \|_{L^3} &\lesssim \sum_{r < q} \| \La^{1-\de} P_{\approx r} \| 2^{-\a r} \| \La^{-1 +\de} \th(t) \|_{\dot{B}_{3,\infty}^\a} \lesssim 2^{(1- \de - \a) q} \| \La^{-1 +\de} \th(t) \|_{\dot{B}_{3,\infty}^\a}.
}
In the case of SQG where $\de = 1/2$, $\a = (1+\de)/3 = 1/2$ at the critical exponent, we have $1 - \de - \a = 0$ and the above estimate breaks down.  Instead, in this case, we require $\th(t) \in L_x^3$ to bound $P_{\leq \hq} \th(t)$ uniformly in $L^3$ by $\lesssim \| \th(t) \|_{L^3_x}$.

We now use the Fourier support of $Q_{qL}$ in \eqref{eq:FsuppQqL}, the fact that $\de_{q L} B_{\hq N} = 0$ unless $\hq - 2 \leq q \leq \hq + 3$, and H\"{o}lder to obtain
\ali{
\| (-\De)^{-1} \nb^j Q_{qL}(\th)  \|_{L^{3/2}_x} &\lesssim 2^{-q} \| Q_{qL} \|_{L^{3/2}_x} \lesssim 2^{-q} \| P_q \th \|_{L_x^3} \| P_{\approx q} T_{> \hq}\| \| P_{\leq q - 1} \th(t) \|_{L_x^3} \notag \\
&\lesssim 2^{-q} \|P_{\approx q} \La^{1 - \de} \|\, \| P_q \La^{-1+\de} \th(t) \|\, \|P_{\approx q} \nb (-\De)^{-1+\de} \|\, \| P_{\leq q - 1} \th(t) \|_{L_x^3} \notag \\
&\lesssim 2^{(-1 + (1 - \de) - \a + (2 \de - 1) + ( 1 - \de - \a)) q } \| \La^{-1 +\de} \th(t) \|_{\dot{B}_{3,\infty}^\a}^2 \notag\\
\| (-\De)^{-1} \nb^j Q_{qL}(\th)  \|_{L^{3/2}_x} &\lesssim 2^{- 2 \a q }  \| \La^{-1 +\de} \th(t) \|_{\dot{B}_{3,\infty}^\a}^2. \label{eq:Qqlbd}
}
Substituting into \eqref{eq:IBPLterm} leaves us with 
\ali{
\sup_N \sum_{q = \hq - 2}^N |\de_{q L} B_{\hq N}| &\lesssim \int \eta(t) \| \nb_j \nb_\ell \psi_{\hq}(t) \|_{L_x^3} 2^{- 2 \a \hq }  \| \La^{-1 +\de} \th(t) \|_{\dot{B}_{3,\infty}^\a}^2 dt. \label{eq:deqLBqNbd}
}
(In the SQG case, we simply bound $\| P_q \th(t) \|_{L_x^3}, \| P_{\leq q - 1} \th(t) \|_{L_x^3} \lesssim \|\th(t)\|_{L_x^3}$ in deriving \eqref{eq:Qqlbd} to obtain $\| \th(t) \|_{L_x^3}^2$ in place of $\| \La^{-1 +\de} \th(t) \|_{\dot{B}_{3,\infty}^\a}^2$ on the right hand side of \eqref{eq:deqLBqNbd}.)


We now proceed to establish the same bound for \eqref{eq:deqH1} and \eqref{eq:deqH2}.  In fact we will only consider \eqref{eq:deqH2} since \eqref{eq:deqH1} can be treated identically.  Consider the bilinear form appearing in \eqref{eq:deqH2}
\ALI{
Q_q^\ell[\th,\th] = P_{q+1} \th T_{> \hq}^\ell P_{q} \th + P_q \th T_{> \hq}^\ell P_{q+1} \th = Q_q^\ell[P_{\approx q} \th, P_{\approx q} \th]
}
We suppress the dependence of $Q_q$ on $\hq$ to ease notation.  In Section~\ref{sec:deriveBilinear} below we derive the following divergence form for $Q_q$
\ali{
Q_q^\ell[\th,\th] &= \nb_j B_q^{j\ell}[P_{\approx q} \th, P_{\approx q} \th] \label{eq:divFormQq} \\
B_q^{j\ell}[f, f]&= \int_{\R^2 \times \R^2}  f(t, x - h_1) f(t, x - h_2) \widetilde{K}_q^{j\ell}(h_1,h_2) dh_1 dh_2 \notag \\
\| \widetilde{K}_q \|_{L^1(\R^2 \times \R^2)} &\lesssim 2^{2(-1+\de)q}, \quad \| B_q^{j\ell}[f, f] \|_{L_x^{3/2}} \leq \| \widetilde{K}_q \|_{L^1(\R^2 \times \R^2)} \| f \|_{L_x^3}^2, \label{eq:KqBq:bd}
}
where again we suppress the dependence of $B_q$ and $K_q$ on $\hq$.  The last estimate follows by H\"{o}lder and Fubini using the duality between $L^{3/2}$ and $L^3$ together with the translation invariance of the $L^3$ norm.  

Substituting \eqref{eq:divFormQq} into \eqref{eq:deqH2}, integrating by parts and using \eqref{eq:KqBq:bd}, we have
\ali{
\de_{q H2} B_{\hq N} &= - \int \eta(t) \int \nb_j \nb_\ell \psi_{\hq} B_q^{j\ell}[P_{\approx q} \th, P_{\approx q} \th] dx dt \notag \\
|\de_{q H2} B_{\hq N}| &\leq \int \eta(t) \, \|\nb_j \nb_\ell \psi_{\hq}(t)\|_{L_x^3} \|B_q^{j\ell}[P_{\approx q} \th, P_{\approx q} \th] \|_{L_x^{3/2}} \notag \\
&\lesssim \int \eta(t) \, \| \nb_j \nb_\ell \psi_{\hq}(t) \|_{L_x^3} 2^{2(-1+\de)q} \| P_{\approx q} \th(t) \|_{L_x^3}^2 dt \label{eq:L3BoundAtq} \\
&\lesssim \int \eta(t) \,\| \nb_j \nb_\ell \psi_{\hq}(t) \|_{L_x^3} \| P_{\approx q} \La^{-1 +\de} \th(t) \|_{L_x^3}^2 dt \notag \\
&\lesssim \int \eta(t) \, \| \nb_j \nb_\ell \psi_{\hq}(t) \|_{L_x^3} 2^{-2 \a q} \| \La^{-1 +\de} \th(t) \|_{\dot{B}_{3,\infty}^\a}^2 dt \notag \\
\sup_N \sum_{q = \hq - 2}^N |\de_{q H2} B_{\hq N}| &\lesssim \int \eta(t) \| \nb_j \nb_\ell \psi_{\hq}(t) \|_{L_x^3} 2^{-2\a \hq} \| \La^{-1 +\de} \th(t) \|_{\dot{B}_{3,\infty}^\a}^2 dt \label{eq:boundFordeqTerms}
}
In the SQG case ($\de = \a = 1/2$), we replace $\| \La^{-1 +\de} \th(t) \|_{\dot{B}_{3,\infty}^\a}$ with $\|\th(t) \|_{L_x^3}$, in which case the bound follows from \eqref{eq:L3BoundAtq}.

We similarly obtain $\sup_N \sum_{q = \hq - 2}^N |\de_{q H1} B_{\hq N}|$ bounded by the right hand side of \eqref{eq:boundFordeqTerms}, which, in view of \eqref{eq:deqLBqNbd}, means that our bound for \eqref{eq:integralForBpsihq} is
\ALI{
|B(\psi_{\hq}; \th, \th)| = \lim_N |B(\psi_{\hq}; \th_N, \th_N)| \leq  \int \eta(t) \| \nb_j \nb_\ell \psi_{\hq}(t) \|_{L_x^3} 2^{-2\a \hq} \| \La^{-1 +\de} \th(t) \|_{\dot{B}_{3,\infty}^\a}^2 dt.
}
Using \eqref{eq:boundForNb2psi}, the $dt$ integrand on the right hand side is bounded by $\eta(t) \| \La^{-1+\de} \th(t) \|_{\dot{B}_{3,\infty}^\a}^3$ (or bounded by $\eta(t) \| \th(t)\|_{L_x^3}^3$ in the SQG case), which is $L^1$ in time since $\La^{-1+\de} \th(t) \in L_t^3 \dot{B}_{3,c(\N)}^\a$.  Moreover, the little-Oh bound \eqref{eq:littleOhBound} shows that the $dt$ integrand goes to $0$ for almost every $t$ as $\hq \to \infty$.  By the dominated convergence theorem, we conclude that $|B(\psi_{\hq}; \th, \th)| \to 0$ as $\hq \to \infty$.

\subsection{Hamiltonian conservation in the Euclidean case} \label{sec:EuclideanAgain}

In the Euclidean case we replace $\th_N = P_{\leq N} \th$ by $\th_N = P_{[-N,N]} \th \equiv (P_{\leq N} - P_{\leq -N})\th$.  Then equation \eqref{eq:freqTrunTh} still holds, but it is not quite a classical integral.  Rather, for $P_{[-N,\hq]} \th \equiv (P_{\leq \hq}^2 - P_{\leq -N})\th$ we have
\ali{
B_{\hq N} = B(\psi_{\hq}; \th_N, \th_N) &\equiv \int \eta(t) \int T^\ell(P_{[-N, N]}\theta) P_{[-N,N]} \th \nb_\ell \De^{-1+\de} P_{[-N, \hq]} \th dx dt \label{eq:mainTermEuc} \\
&+ \int \eta(t) B(P_{\leq -N} \De^{-1+\de} \th(t), P_{[-N,N]} \th(t)) dt \label{eq:errEuc}
}
The main point is that the term \eqref{eq:errEuc} is an error term, for $\th \in L_t^3 \dot{H}^{-1+\de}$.  In fact,
\ALI{
|B(P_{\leq -N} \De^{-1+\de} \th, P_{[-N,N]} \th)| &\lesssim \| \nb^2 P_{\leq -N} \De^{-1+\de} \th \|_{L^\infty} \| P_{[-N,N]} \th \|_{\dot{H}^{-1+\de}}^2 \\
&\lesssim \| P_{\leq -N} \nb \La^\de \|_{L^2\to L^\infty} \| \nb \La^{-2+\de} \th \|_{L^2} \| \th \|_{\dot{H}^{-1+\de}}^2 \\
&\lesssim o(1) \cdot \| \th(t) \|_{\dot{H}^{-1+\de}}^3
}
The remainder of the proof in the Euclidean case, which involves estimating \eqref{eq:mainTermEuc}, follows along the lines of the torus case but with $-N$ as the lower bound for the frequency sums.  
Specifically, after observing the key cancellation, the quantity that must be estimated is
\ALI{
\int \eta(t) \int \nb_\ell (-\De)^{-1 +\de} P_{[-N,\hq]} \th P_{[-N,N]} \th T_{> \hq}^\ell(P_{[-N,N]} \th) dx dt, 
}
and one takes $P_{[-N, q]} \th \equiv (P_{\leq q} - P_{\leq -N})\th$ in place of $P_{\leq q} \th$ during the analysis.

\section{Deriving the bilinear form} \label{sec:deriveBilinear}
In this section, we include the derivation of the bilinear form \eqref{eq:divFormQq} and the bound \eqref{eq:KqBq:bd}.  We follow \cite{isett2021direct}, which in turn generalizes a computation of \cite{buckShkVicSQG}.  We only consider the case where the domain is the torus since the nonperiodic case is simpler.

Consider the bilinear form appearing in \eqref{eq:deqH2}
\ali{
Q_q^\ell[\th,\th] &= P_{q+1} \th T_{> \hq}^\ell P_{q} \th + P_q \th T_{> \hq}^\ell P_{q+1} \th = Q_q^\ell[P_{\approx q} \th, P_{\approx q} \th]  \label{eq:QqellReminder} \\
\widehat{T_{> \hq}^\ell f}(\xi) &= m_{\hq}^\ell(\xi) \hat{f}(\xi) =  ( 1 - \hat{\eta}_{\leq \hq}^2(\xi) )\ep^{\ell a} (i \xi_a) |\xi|^{2(-1+\de)} \hat{f}(\xi). \notag
}
The most crucial point is the anti-symmetry of the operator $T$, reflected in the fact that $m_{\hq}^\ell$ is odd.  Without this feature, \eqref{eq:QqellReminder} may not have integral $0$ and an anti-divergence would not exist.  The idea to obtain the divergence form will be to Taylor expand in frequency space.  Another example of Taylor expansion in frequency space to derive an anti-divergence operator can be found in \cite{IOnonpd}.

We start by approximating the periodic function $\th$ by Schwartz functions.  By mollifying in frequency space, we may assume that the Schwartz approximations to $P_q \th$ and $P_{q+1} \th$ have frequency support in $\{ 2^{q - 2} \leq |\xi| < 2^{q+2} \}$, and that they converge pointwise in physical space while also remaining uniformly bounded there.  By abuse of notation, assume now that $P_q \th$ and $P_{q+1} \th$ are the Schwartz approximations.  For these approximations we construct a bilinear anti-divergence as follows. 

\begin{align}
	\hat{Q}_q^\ell(\xi) 
	&= 
	\int_{\hat{\R}^2} 
	m_{\hq}^\ell(\xi - \eta) \widehat{P_{q+1}\th}(\xi - \eta) \widehat{P_{q}\th}(\eta) + 
	\widehat{P_{q+1}\th}(\xi - \eta) m_{\hq}^\ell(\eta)\widehat{P_{q}\th}(\eta) \fr{d\eta}{(2\pi)^2} \nonumber \\
	&= 
	\int_{\hat{\R}^2} 
	m_{\hq}^\ell(\xi - \eta)\hat{\chi}_{q}\widehat{P_{q+1}\th}(\xi - \eta)\widehat{P_{q}\th}(\eta) + 
	\widehat{P_{q+1}\th}(\xi - \eta) m_{\hq}^\ell(\eta)\hat{\chi}_{q}\widehat{P_{q}\th}(\eta) \fr{d\eta}{(2\pi)^2} \nonumber \\
	&\text{Define } m_{q}(\cdot) \coloneqq m_{\hq}(\cdot)\hat{\chi}_{q}(\cdot) \nonumber \\
	&=
	\int_{\hat{\R}^2} 
	\left [m^{\ell}_{q}(\xi - \eta) + m^{\ell}_{q}(\eta) \right ] 
	\widehat{P_{q+1}\th}(\xi - \eta) 
	\widehat{P_{q}\th}(\eta) \fr{d\eta}{(2\pi)^2} \nonumber \\
	&\text{By the oddness of the multiplier } m^{\ell}_{q}\nonumber \\
	&=
	\int_{\hat{\R}^2} 
	\left [m^{\ell}_{q}(\xi - \eta) - m^{\ell}_{q}(-\eta) \right ] 
	\widehat{P_{q+1}\th}(\xi - \eta) 
	\widehat{P_{q}\th}(\eta) \fr{d\eta}{(2\pi)^2} \nonumber \\
	&=
	i\xi_j
	\int_{\hat{\R}^2} \int_0^1 
	-i \nb^j m^{\ell}_{q}(u_\si) d\si  
	\widehat{P_{q+1}\th}(\zeta) 
	\widehat{P_{q}\th}(\eta) \fr{d\eta}{(2\pi)^2} \nonumber \\
\hat{Q}_q^\ell(\xi)	&\eqqcolon
	i\xi_j
	\int_{\hat{\R}^2}
	\hat{K}^{j\ell}_q (\zeta, \eta) 
	\widehat{P_{q+1}\th}(\zeta) 
	\widehat{P_{q}\th}(\eta) \fr{d\eta}{(2\pi)^2}, \label{eq: hat Q formula}
\end{align}
where we have set
\[
	\zeta \coloneqq \xi - \eta, \quad u_\si \coloneqq \si (\xi - \eta) - (1 - \si)\eta = \si \zeta - (1-\si) \eta, 
\]
and we have used Taylor's Remainder Theorem in the last equality. In the last line we defined 
\begin{equation}\label{def: hat K }
	\hat{K}^{j\ell}_q (\zeta, \eta) \coloneqq 
	\int_0^1 
	-i \nb^j m^{\ell}_{q}(u_\si)
	\hat{\chi}_{q}(\zeta)
	\hat{\chi}_{q}(\eta)
	d\si.
\end{equation}
Here we suppress the dependence of $K_q^{j\ell}$ on $\hq$.  For $q > \hq + 1$, the cutoff $(1 - \hat{\eta}_{\leq \hq}^2)$ in $m_q$ is $1$ on the support of $\hat{\chi}_q$, and the homogeneity of $m^\ell$ leads to the scaling $\widehat{K}_q^{j\ell}(\zeta, \eta) = 2^{2(-1+\de)q} f^{j\ell}(2^{-q} \zeta, 2^{-q} \eta)$, where $f^{j\ell}$ is Schwartz, which in turn implies the bound
\ali{
\| K_q \|_{L^1(\R^2 \times \R^2)} &\lesssim 2^{2(-1 + \de)q} \label{eq:boundForKq}
}
for $q > \hq + 1$.  The same bound also holds for $\hq - 1 \leq q \leq \hq + 1$ by scaling; for example $K_{\hq - 1}^{j\ell}(\zeta, \eta) = 2^{2(-1+\de)\hq} f_0^{j\ell}(2^{-\hq} \zeta, 2^{-\hq} \eta)$, for some Schwartz function $f_0$.  Finally, $K_q = 0$ for $q < \hq - 1$.

Take the inverse Fourier Transform of \eqref{eq: hat Q formula} to obtain 
\ALI{
	Q_q^\ell(x) &= \nb_j \int_{\hat{\R}^2}e^{i \xi \cdot x} \int_{\hat{\R}^2}
	\hat{K}^{j\ell}_{q} (\zeta, \eta)
	\widehat{P_{q+1}\th}(\zeta) 
	\widehat{P_{q}\th}(\eta)
	\fr{d\eta}{(2\pi)^2} \fr{d\xi}{(2\pi)^2} \\
	&=
	 \nb_j \int_{\hat{\R}^2\times \hat{\R}^2} e^{i (\zeta + \eta) \cdot x}
	 \hat{K}^{j\ell}_{q} (\zeta, \eta)
	\widehat{P_{q+1}\th}(\zeta) 
	\widehat{P_{q}\th}(\eta)
	\fr{d\eta}{(2\pi)^2} \fr{d\zeta}{(2\pi)^2}.
}
Observe that $\hat{K}_{q}^{j\ell}$ is 
compactly supported in frequency and smooth due to the factors of $\hat{\chi}_{\approx \la}$, making $\hat{K}_{q}^{j\ell}$ a Schwartz function. Thus we can define the corresponding physical space kernel  $K^{j\ell}_{q} (h_1, h_2)$ as the inverse Fourier Transform of $\hat{K}^{j\ell}_{q} (\zeta, \eta)$ so that 
\[
	\hat{K}^{j\ell}_{q} (\zeta, \eta) = \int_{\hat{\R}^2\times \hat{\R}^2} e^{-i(\zeta, \eta) \cdot (h_1, h_2)}K^{j\ell}_{q} (h_1, h_2) dh_1 dh_2.
\]
Then 
\ALI{
	Q_q^\ell(x) &=
	\nb_j \int_{\R^2 \times \R^2} 
	e^{i (\zeta + \eta) \cdot x}	
	\int_{\hat{\R}^2\times \hat{\R}^2}
	e^{-i(\zeta, \eta) \cdot (h_1, h_2)}
	K^{j\ell}_{q}(h_1, h_2)
	dh_1 dh_2 
	\widehat{P_{q+1}\th}(\zeta) 
	\widehat{P_{q}\th}(\eta)
	\fr{d\eta}{(2\pi)^2} \fr{d\zeta}{(2\pi)^2} \\
	&=
	\nb_j \int_{\R^2 \times \R^2} 
	K^{j\ell}_{q}(h_1, h_2)
	\int_{\hat{\R}^2}
	e^{i\zeta \cdot (x - h_1)}
	\widehat{P_{q+1}\th}(\zeta) 
	\fr{d\zeta}{(2\pi)^2}
	\int_{\hat{\R}^2}
	e^{i\eta \cdot (x - h_2)}
	\widehat{P_{q}\th}(\eta)
	\fr{d\eta}{(2\pi)^2}
	dh_1 dh_2 \\
	&=
	\nb_j \int_{\R^2 \times \R^2}K^{j\ell}_{q}(h_1, h_2)P_{q+1}\th(x - h_1)P_{q}\th (x - h_2) dh_1 dh_2 \\
	&\eqqcolon \nb_j B^{j\ell}_{q}[\th, \th](x)
}
Here we define the bilinear anti-divergence operator as
\begin{align} 
	B^{j\ell}_{q}[F_1, F_2](x) &\coloneqq  \int_{\R^2 \times \R^2}K^{j\ell}_{q}(h_1, h_2)P_{q+1}F_1(x - h_1 )P_qF_2(x - h_2) dh_1 dh_2, \notag \\
	B^{j\ell}_{q}[F_1, F_2](x)	&= \int_{\R^2 \times \R^2}\widetilde{K}^{j\ell}_{q}(h_1, h_2) F_1(x - h_1) F_2(x-h_2)  dh_1 dh_2 \label{def: Bilinear form}
\end{align}
for scalar fields $F_1, F_2 \in C^\infty(\T^2)$.  Here we obtain $\widetilde{K}_q = (P_{q+1} \otimes P_q) K_q$ from $K_q$ by writing $P_{q+1} F_1$ (and $P_q F_2$) as a convolution in the variable $h_1$ ($h_2$ in the latter case) and we pass $P_q$ and $P_{q+1}$ onto the kernel $K_q$ by a change of variables.  This bi-convolution operator satisfies the equality
\ali{
T^\ell[P_{q+1}\th] \nb_\ell P_{q}\th + T^\ell[P_{q}\th]\nb_\ell P_{q+1}\th =
	\nb_j B^{j\ell}_{q}[P_{\approx q} \th, P_{\approx q}\th](x) \label{eq:bilin:antidiv}
}
for the 
Schwartz approximations to $P_{q+1}\th, P_{q}\th$ whose Fourier supports lie in $\{ |\xi| \sim 2^q \}$. Since $\|\widetilde{K}_{q} \|_{L^1(\R^2)}$ is finite and the Schwartz approximations to $P_{\approx q} \th$ are bounded and converge pointwise, we may apply dominated convergence to pass to the limit in \eqref{def: Bilinear form} with $F_1 = F_2 \to P_{\approx q} \th$ to conclude that \eqref{eq:bilin:antidiv} holds (both in $\DD'$ and classically) for the periodic functions $P_{q+1}\th, P_{q}\th$.  Thus we have shown \eqref{eq:divFormQq}.  
Finally, note that the bound \eqref{eq:boundForKq} for $K_q$ holds as well for $\widetilde{K}_q$, which establishes \eqref{eq:KqBq:bd}.

$ $

\noindent {\bf Remark:}  For future reference, we note here the important property that $K_q$ is trace-free in the special case of the mSQG multiplier.  To check that $K_q^{j\ell}$ is trace-free, we use the fact that the frequency cutoff $\chi_q(\xi) = \tilde{\chi}_q(|\xi|^2)$ is radial and it suffices check that $\nb^j m_q^\ell(\xi)$ is trace-free:
\ali{
m_q^\ell(\xi) &= \ep^{\ell a} i \xi_a |\xi|^{2(-1 + \de)} \chi_q(\xi) = \ep^{\ell a} i \xi_a |\xi|^{2(-1 + \de)} \tilde{\chi}_q(|\xi|^2) \notag \\ 
\nb^j m_q^\ell(\xi) &= i \ep^{\ell j} |\xi|^{2(-1 + \de)} \chi_q(\xi) + \ep^{\ell a} i \xi_a \xi^j F_{q,\de}(|\xi|^2). \label{eq:itsTraceFree}
}
 Here $F_{q,\de}$ is some radial smooth function adapted to $|\xi|^2 \sim (2^q)^2$.  As the above expression is trace-free ($\ep^{\ell j}$ is antisymmetric and $\ep^{\ell a} \xi_a$ is perpendicular to $\xi$), this calculation confirms that $K_q^{j\ell}$ is trace-free.

\section{Defining the nonlinearity} \label{sec:definingNonlinearity}

For scalar functions $\psi$, $\th \in C^\infty(\T^2)$, we define the form
\ali{
B[\psi, \th] &= -\int \psi(x) T^\ell\th(x) \nb_\ell \th(x) dx = \int \nb_\ell \psi \th T^\ell \th dx
}
The main result in this section is the estimate
\ali{
|B[\psi,\th]| &\lesssim_\de \| \nb^2 \psi \|_{C^0} \| \th \|_{\dot{H}^{-1 + \de}}^2, \qquad  0 < \de < \infty, \label{eq:bilinBd}
}
where the constant depends on $\de > 0$.  The same bound also holds for the case of Euler ($\de = 0$), in which case the nonlinearity is given by $B[\psi, \th] = -\int \psi \cdot \ep_{a_\ell} \nb^a  \nb_j(  T^j \th T^\ell \th ) dx $, $\ep$ the Levi-Civita symbol and $T^\ell \th = -\ep^{\ell b} \nb_b \De^{-1} \th$.

The associated trilinear form $T[\psi,\th,\phi] \coloneqq \int \nb_\ell \psi (\th T^\ell \phi + \phi T^\ell \th) dx$, for which $B[\psi, \th] = \fr{1}{2}T[\psi, \th,\th]$ and $T[\psi, \th, \phi] = \fr{1}{4}(B[\psi, \th + \phi] - B[\psi, \th - \phi])$, will consequently satisfy a bound
\ali{
|T[\psi, \th, \phi]| \lesssim \| \nb^2 \psi \|_{C^0} \| \th \|_{\dot{H}^{-1 + \de}} \| \phi \|_{\dot{H}^{-1 + \de}}. \label{eq:trilinBd}
}
This bound on $T$ guarantees a unique continuous extension to $\th \in L_t^2 \dot{H}^{-1 + \de}$ for the nonlinearity defining the SQG equation by a Cauchy sequence argument.  We now begin the proof of \eqref{eq:bilinBd}.

Note that when $\psi$ is a constant, $B[\psi,\th] = 0$.  Thus we obtain the following decomposition of paraproduct type.
\ali{
B[\psi, \th] &= \sum_k \int P_k \nb_\ell \psi \th \, T^\ell \th dx = LL + HH + HL + LH \label{eq:BpsithExpand}\\
LL &= \sum_k \int P_k \nb_\ell \psi P_{\leq k - 3} \th \, T^\ell P_{\leq k - 3} \th dx = 0 \label{eq:LL} \\
HL &= \sum_k \sum_{q \geq k - 3 } \int P_k \nb_\ell \psi P_{q+1} \th \, T^\ell P_{\leq q-1} \th dx  \\
LH &= \sum_k \sum_{q \geq k - 3 } \int P_k \nb_\ell \psi P_{\leq q-1} \th \, T^\ell P_{q+1} \th dx  \\
HH &= \sum_k \sum_{q \geq k - 3 } \int P_k \nb_\ell \psi \underbrace{\left( P_{q} \th \, T^\ell P_{q+1} \th + P_{q+1}  \th \, T^\ell P_{q} \th  + P_{q+1} \th \, T^\ell P_{q+1} \th \right)}_{Q_q^\ell(\th)}  dx  \label{eq:HH}
}
That $LL = 0$ follows from Plancharel and the bound on the frequency support of $P_{\leq k - 3} \th T^\ell P_{\leq k - 3}$.  

We now estimate $HH$.   From Section~\ref{sec:deriveBilinear}, we have a divergence form for $Q_q^\ell(\th) = \nb_j B_q^{j\ell}[P_{\approx q} \th]$, where the multiplier for $P_{\approx q}$ is a bump function adapted to $\{ \xi \in \widehat{\R}^2 ~:~ 2^{q - 4} \leq |\xi| \leq 2^{q+4} \}$,
\ali{
B_q^{j\ell}[P_{\approx q} \th](x) &= \int_{\R^2 \times \R^2} K_q^{j\ell}(h_1,h_2) P_{\approx q} \th(x - h_1) P_{\approx q} \th(x - h_2) dh_1 dh_2,  \label{eq:BqisKqof}
}
and $\| K_q \|_{L^1(\R^2 \times \R^2)} \lesssim 2^{2(-1+\de)q}$.  (The $K_q$ is the sum of the kernels coming from the interaction of $P_{q+1} \th$ with itself, and the other coming from the symmetric interaction of $P_{q+1} \th$ with $P_q \th$.)  Plugging in the anti-divergence, interchanging the order of summation (which is easily justified for smooth $\psi$ and $\th$), and integrating by parts yields
\ali{
-HH &= \sum_q \int \left(\sum_{k \leq q - 3} P_k \nb_j \nb_\ell \psi \right) B_q^{j\ell}[P_{\approx q} \th] dx \label{eq:divFormulaHH} \\ 
|HH| &\leq \sum_q \| P_{\leq q-3} \nb^2 \psi \|_{C^0} \| B_q[P_{\approx q} \th] \|_{L^1} dx \notag \\
|HH| &\leq \sup_\ell \| P_{\leq \ell} \nb^2 \psi \|_{C^0} \sum_q \| K_q \|_{L^1(\R^2 \times \R^2)} \| P_{\approx q} \th \|_{L^2(\T^2)}^2 \notag \\
&\lesssim  \| \nb^2 \psi \|_{C^0} \sum_q \left( 2^{(-1 + \de)q} \| P_{\approx q} \th \|_{L^2(\T^2)} \right)^2 \notag \\
|HH| &\lesssim \| \nb^2 \psi \|_{C^0} \| \th \|_{\dot{H}^{-1 + \de}}^2 \label{eq:IHHbd}
}
Now we consider $HL$ and $LH$.  Notice that $\Fsupp (P_{q+1} \th \, T^\ell P_{\leq q - 1} \th) \subseteq \{ |\xi| \geq 2^{q-1} \}$ and similarly for the quadratic term in $LH$.  Thus the sum over $q$ extends only from $k - 3$ to $k + 2$.  To simplify the presentation, we treat the sum as though there is only one term with $q = k$.
\ali{
HL &\sim \sum_k \int P_k \nb_\ell \psi P_{k+1} \th \, T^\ell P_{\leq k-1} \th dx, \qquad 
LH \sim \sum_k \int P_k \nb_\ell \psi P_{\leq k-1} \th \, T^\ell P_{k+1} \th dx \notag
} 

We first treat $HL$.  Recall from the remarks surrounding \eqref{eq:no0mode} that we may assume that the zero Fourier mode of $\th$ vanishes.  Thus we decompose
\ali{
HL &\sim \sum_k \sum_{j \leq k-1} \int P_k \nb_\ell \psi P_{k+1} \th T^\ell P_j \th dx \label{eq:decomposeHL}
}
Using $\| T^\ell P_j \th \|_{L^2} = \| \nb^\ell (-\De)^{-1+\de} P_j \th \|_{L^2} \lesssim 2^{(-1+2\de)j} \| P_j \th \|_{L^2}$, we obtain
\ali{
|HL| &\lesssim \sum_k\sum_{j \leq k-1} \| P_k \nb \psi \|_{L^\infty} \| P_{k+1} \th \|_{L^2} 2^{\de j} 2^{(-1+\de)j} \| P_j \th \|_{L^2} \notag \\
&\lesssim \sum_j \sum_{k \geq j + 1} \| P_k \nb \psi \|_{L^\infty} 2^{(1-\de)k} \| \La^{-1 + \de} P_{k+1} \th \|_{L^2} 2^{\de j} \| \La^{-1 + \de} P_j \th \|_{L^2} \notag \\
&\lesssim \sum_j \sum_k \| P_k \nb^2 \psi \|_{L^\infty} 2^{-\de|j - k|} \| \La^{-1 + \de} P_{k+1} \th \|_{L^2}  \| \La^{-1 + \de} P_j \th \|_{L^2} \notag \\
&\lesssim \sup_\ell \| P_\ell \nb^2 \psi \|_{L^\infty} \sum_{j,k} 2^{-\de|j - k|} \| \La^{-1 + \de} P_{k+1} \th \|_{L^2} \| \La^{-1 + \de} P_j \th \|_{L^2} \notag \\
|HL| &\lesssim \sup_\ell \| P_\ell \nb^2 \psi \|_{L^\infty} \| \th \|_{\dot{H}^{-1 + \de}}^2  \label{eq:HL1bd}
}
upon applying the Schur test to the last term.  Here we have used the standard bound
\ALI{
2^k \| P_k \nb_\ell \psi \|_{L^\infty} &= 2^k \| \nb^j \De^{-1} P_{\approx k} P_k \nb_j \nb_\ell \psi \|_{L^\infty}\lesssim \|P_k \nb^2 \psi \|_{L^\infty}.
}

Finally, the bound for $LH$ follows without interchanging the order of summation
\ali{
LH &\sim \sum_k \sum_{j \leq k-1} \int P_k \nb_\ell \psi \, P_j \th \, T^\ell P_{k+1} \th dx  \label{eq:decomposeLH} \\
|LH|&\lesssim \sum_k \sum_{j \leq k-1} \| P_k \nb_\ell \psi \|_{L^\infty} \| P_j \th \|_{L^2} 2^{(-1 + 2 \de)k} \| P_{k+1} \th \|_{L^2} \notag \\
&\lesssim \sum_k \sum_{j \leq k-1} \| P_k \nb_\ell \psi \|_{L^\infty} 2^{(1-\de)j} \| \La^{-1 + \de} P_j \th \|_{L^2} 2^{\de k} \| \La^{-1 + \de} P_{k+1} \th \|_{L^2} \notag \\
&\leq \sum_k \sum_{j \leq k-1} 2^k \| P_k \nb_\ell \psi \|_{L^\infty} 2^{-\de|k - j|} \| \La^{-1 + \de} P_j \th \|_{L^2}  \| \La^{-1 + \de} P_{k+1} \th \|_{L^2} \notag \\
|LH| &\lesssim \sup_\ell \| P_\ell \nb^2 \psi \|_{L^\infty} \sum_{j, k} 2^{-\de|k - j|} \| \La^{-1 + \de} P_j \th \|_{L^2}  \| \La^{-1 + \de} P_{k+1} \th \|_{L^2} \notag \\
|LH| &\lesssim \sup_\ell \| P_\ell \nb^2 \psi \|_{L^\infty} \| \th \|_{\dot{H}^{-1+\de}}^2. \label{eq:IHL2bd}
}
Again we have applied the Schur test.	 Combining \eqref{eq:IHHbd}, \eqref{eq:HL1bd} and \eqref{eq:IHL2bd}, we have proven \eqref{eq:bilinBd}.  

\subsection{The nonperiodic case}  

We now explain how to extend the estimate \eqref{eq:bilinBd} where $\T^2$ is replaced by $\R^2$.  In this case, we define
\ALI{
B[\psi, \th] &= -\int \psi(x) \nb_\ell [T^\ell\th(x) \th(x)] dx = \int \nb_\ell \psi \th T^\ell \th dx
}
for $\psi$ in the Schwartz class and for $\th \in \SS_0$ in the class of Schwartz functions with compact frequency support away from $0$.  The additional details involve handling the low frequencies.

For any integer $m \geq 0$, we have $\psi(x) = P_{\leq -m} \psi + \sum_{k = -m+1}^\infty P_k \psi$, and the series converges uniformly.  Furthermore, $\| P_{\leq -m} \nb \psi \|_{L^\infty(\R^2)} \lesssim 2^{-2m} \| \nb \psi \|_{L^1(\R^2)}$, $\| P_k \nb \psi \|_{L^\infty(\R^2)} \lesssim 2^{2k} \| \nb \psi \|_{L^1(\R^2)}$, and $\| P_k \nb \psi \|_{L^\infty(\R^2)} \lesssim 2^{-k} \| \nb^2 \psi \|_{L^\infty(\R^2)}$  together with $\th T^\ell \th \in L^1$ for $\th \in \SS_0$ allow us to obtain the expansion \eqref{eq:BpsithExpand} by letting $m \to \infty$ and applying dominated convergence and Fubini-Tonelli to interchange the sum and the integral.  Equations \eqref{eq:LL} through \eqref{eq:HH} then follow routinely from $\int P_k \nb_\ell \psi \th T^\ell \th dx = \lim_{N \to \infty} \int P_k \nb_\ell \psi P_{\leq N}\th T^\ell P_{\leq N} \th dx$ once the absolute convergence of the series has been obtained.

Bounding each term in \eqref{eq:LL} through \eqref{eq:HH} requires the decompositions \eqref{eq:decomposeHL} and \eqref{eq:decomposeLH}.  In the case of $\T^2$, the summation over $j$ includes only finitely terms, but it extends to all $j > -\infty$ on $\R^2$.  Since $P_{\leq k - 1} = P_{\leq -m} + \sum_{j = -m+1}^{k-1} P_j$ for any $m$, the key point is to verify the limit
\ali{
\lim_{m \to \infty} \int P_k \nb_\ell \psi (P_{k+1} \th \, T^\ell P_{\leq -m} \th dx + P_{\leq -m} \th \, T^\ell P_{k+1} \th ) dx = 0 
}
for each fixed $k$, every Schwartz function $\psi$ and every $\th \in \SS_0$.  This limit is clear by the compact frequency support of $\th$ away from the origin, since the left hand side is $0$ for sufficiently large $m$.



\subsection{Remark on generalized estimates of this type} \label{sec:remkGen}

Our technique for estimating the nonlinearity can be applied to a more general class of nonlinearities that have the form $\int \psi \th T[\th] dx$.  The only properties of the bilinear form $B(\psi, \th) = \int \nb_\ell \psi \th T^\ell \th dx$ that we have used in the proof are the scaling and anti-symmetry of $T^\ell$, reflected in the oddness and the homogeneity of the multiplier.  
For example, the bound
\ali{
\left| \int \psi_\ell \th \nb^\ell \De \th dx \right| &\lesssim \| \nb \psi \|_{L^\infty} \| \nb \th \|_{L^2}^2, \qquad \psi_\ell, \th \in C_c^\infty \label{eq:localBound}
}  
can be proven by a similar (nonlocal) argument based on the fact that the symbol of $\nb \De$ is odd and degree $3$ homogeneous.  

It is natural to wonder whether the estimate \eqref{eq:localBound} can be proven by an integration by parts argument, but it appears that a nonlocal proof is necessary.  To see that an integration by parts argument would not apply, note that the bound is false if we replace the $L^2(\R^d)$ norms with local $L^2(\supp \psi)$ norms.  To see this failure of the localized version of the estimate, consider a $\th_0$ with $\nb \De \th_0$ nonzero, and choose $\psi_\ell = \chi \nb_\ell \De \th_0$ where $\chi$ is a bump function supported where $\nb \De \th_0$ is nonzero.  Now set $\th_R = \th_0 + R \tilde{\chi}$, where $R$ is large and $\tilde{\chi} = 1$ on the support of $\chi$.  Then $\| \nb \th_R \|_{L^2(\supp \psi)} = \| \nb \th_0 \|_{L^2(\supp \psi)}$ does not depend on $R$, but the left hand side of \eqref{eq:localBound} becomes unbounded as $R$ gets large.

These considerations also motivate why our definition of the space $\dot{H}^{-1+\de}$ requires that we exclude polynomials.  After all the estimate $|\int \psi_\ell \th \nb^\ell \De \th dx| \lesssim \| \nb \psi \|_{L^\infty} \| \th \|_{\dot{H}^1}^2$ would be false for high degree polynomials for which the right hand side would vanish.

\section{Continuous extension of the operator} \label{sec:ctsExtend}
We now have the homogeneous estimate 
\ali{
|B[\psi,\th]| \lesssim_\de \| \nb^2 \psi \|_{L^\infty} \| \th \|_{\dot{H}^{-1 + \de}}^2 \label{eq:homogeneousBound}
}
for $\psi, \th$ Schwartz, and this bound extends by continuity for any Schwartz $\psi$ and any $\th \in \dot{H}^{-1+\de}$ using the corresponding bound \eqref{eq:trilinBd} for the trilinear form and a density and Cauchy sequence argument.  The completion of the Schwartz space under the semi-norm $\| \nb^2 \psi \|_{L^\infty}$ is the space $\dot{C}_0^2$ of $C^2$ functions with bounded second derivatives that satisfy $|\nb^2 \psi(x)| = o(1)$ as $|x| \to \infty$ (see Appendix Section~\ref{sec:functionSpaces}).  The more delicate matter is proving the continuous extension with respect to weak-* convergence, which is the subject of the next section.

\subsection{Extension to \texorpdfstring{$\dot{W}^{2,\infty}$}{the homogeneous Sobolev space}: Definition, boundedness and uniqueness } \label{sec:ExtensionDef}

Now let $\dot{W}^{2,\infty}(\R^2)$ and $\dot{W}^{2,\infty}(\T^2)$ denote the space of distributions $\Psi$ such that $\nb^2 \Psi \in L^\infty$.  This space includes functions (such as quadratic polynomials) for which there is no sequence $\psi_n \in \dot{C}_0^2$ satisfying $\nb^2 \psi_n \to \nb^2 \Psi$ in $L^\infty$, as the latter convergence would imply $\Psi \in \dot{C}_0^2$.  Consequently, our estimate \eqref{eq:homogeneousBound} on $|B[\psi,\th]|$ does not on its own provide a unique continuous extension to allow for $\Psi \in \dot{W}^{2,\infty}$.  

Nonetheless we can uniquely extend the bilinear form $B(\psi, \th)$ to the space of $(\Psi, \th) \in \dot{W}^{2,\infty} \times \dot{H}^{-1+\de}$ such that the extension $B(\Psi, \th)$ also satisfies the homogeneous estimate \eqref{eq:homogeneousBound}, and such that whenever $(\Psi_n, \th_n)$ is a sequence in $\dot{W}^{2,\infty} \times \dot{H}^{-1+\de}$ satisfying $\nb^2 \Psi_n \rightharpoonup \nb^2 \Psi$ in the $L^\infty$ weak-* topology, and $\th_n \to \th$ in the $\dot{H}^{-1+\de}$ norm, we have that $B(\psi_n, \th_n) \to B(\Psi, \th)$.



To define this extension, first let $\th \in \dot{H}^{-1+\de}$ be fixed and $\Psi \in \dot{W}^{2,\infty}$ be given.  Choose a sequence $\psi_n \in C_c^\infty$ such that $\nb^2 \psi_n \hookrightarrow \nb^2 \Psi$ in $L^\infty$ weak-*.  (A construction can be found in Section~\ref{sec:functionSpaces}.)  In particular, there is a uniform bound on $\| \nb^2 \psi_n \|_{L^\infty}$.  
For the sequence $\psi_n$, we use the formulas
\ali{
B(\psi_n, \th) &= HL(\psi_n, \th) + LH(\psi_n,\th) + HH(\psi_n, \th) \\
HL(\psi_n, \th) + LH(\psi_n,\th) &\sim \sum_k \sum_{j \leq k-1} \left\{\int P_k \nb_\ell \psi_n (P_{k+1} \th T^\ell P_j \th + P_j \th \, T^\ell P_{k+1} \th) dx \right\} \label{eq:HLremind} \\
HH(\psi_n,\th) &= \sum_q \left\{\int P_{\leq q-3} \nb_j \nb_\ell \psi_n B_q^{j\ell}[P_{\approx q} \th] dx \right\} \label{eq:HHremind}
}
For $\Psi \in \dot{W}^{2,\infty}$, $\th \in \dot{H}^{-1+\de}$, we define $B(\Psi, \th)$ to be the right hand side with $\psi_n$ replaced by $\Psi$.  For each fixed $k,q \in \Z$, the functions $P_{\leq q-3} \nb_j \nb_\ell \psi_n$ and $2^k P_k \nb_\ell \psi_n = [2^k \De^{-1}\nb^i P_{\approx k}] P_k \nb_i \nb_\ell \psi_n$ (along with their derivatives) are uniformly bounded and they converge {\it uniformly} to $P_{\leq q-3} \nb_j \nb_\ell \Psi$ and $2^k P_k \nb_\ell \Psi$ by a compactness and contradiction argument.  The terms in the braces then converge as $n \to \infty$ for any fixed $k, j \in \Z^2$ or $q \in \Z$, and they are bounded by an absolutely summable series in $(k,j)$ or $q$ by repeating the proof of the bounds for $LH$, $HL$ and $HH$ in Section~\ref{sec:definingNonlinearity}.  By dominated convergence, we obtain $B(\psi_n, \th) \to B(\Psi, \th)$ for any fixed $\th \in \dot{H}^{-1+ \de}$.  

If we desire $B(\Psi, \th)$ to be sequentially continuous in $\Psi$ with respect to weak-* convergence, this argument proves that $B(\Psi, \th)$ is uniquely determined and has the desired continuity property.  The continuous extension maintains the homogeneous bound \eqref{eq:homogeneousBound}, though the optimal constant may in principle be different.  

We also desire the property that $B(\Psi_n, \th_n) \to B(\Psi, \th)$ whenever $\th_n \to \th$ in the $\dot{H}^{-1 + \de}$ seminorm, and $\nb^2 \Psi_n \rightharpoonup \nb^2 \Psi$ in $L^\infty$ weak-*.  This last statement is a corollary of the continuity proven in Section~\ref{sec:continuitySec} below, but it is desirable to have a direct proof.  If $\th_n$ were constant in $n$, the convergence follows as in the previous argument after noting that $\sup_n \| \nb^2 \Psi_n \|_{L^\infty} < \infty$ by the uniform boundedness principle.  Now let $T[\Psi, \th, \phi]$ be the trilinear form, symmetric in $\th,\phi$ for which $B(\Psi, \th) = T[\Psi, \th,\th]/2$.  Then $T[\cdot,\cdot,\cdot]$ is bounded on $\dot{W}^{2,\infty} \times \dot{H}^{-1+\de} \times \dot{H}^{-1+\de}$ as in \eqref{eq:trilinBd}.  We then conclude that
\ALI{
B(\Psi_n, \th_n) - B(\Psi, \th) = T[\Psi_n, \th_n - \th, \th_n]/2 + T[\Psi_n, \th, \th_n - \th]/2 + B(\Psi_n - \Psi, \th)
}
converges to $0$ as $n \to \infty$.

$ $


\subsection{Continuity property of the extended operator} \label{sec:continuitySec}
In this section, we prove another continuity property of the extended operator as the proof will be useful to us later on.

Let $X \subseteq \dot{W}^{2,\infty}$ be bounded in the semi-norm.  We want to show that $B$ is continuous on $X \times \dot{H}^{-1+\de}$ with the appropriate (weak-*$\times$strong) topologies.  We obtain this continuity by showing that the trilinear form $T[\cdot,\cdot,\cdot]$ is continuous as a map on the product $X \times \dot{H}^{-1+\de} \times \dot{H}^{-1+\de}$ into $\R$ when we endow $\dot{H}^{-1+\de}$ with the strong topology induced by the homogeneous norm, and $X$ with the (non-Hausdorff) weak-* topology obtained as the pullback of the map $\Psi \mapsto \nb^2 \Psi$ into $L^\infty$ weak-*.  The continuity of $B(\cdot,\cdot)$ follows then by restricting to the diagonal in $\dot{H}^{-1+\de}\times \dot{H}^{-1+\de}$.  (One can approach this problem by passing to quotients to make $\dot{W}^{2,\infty}$ a normed vector space and cite the metrizability of the weak-* topology on norm bounded subsets of $L^\infty(\R^2)$ together with the sequential continuity just established; however, we believe the following direct approach is more insightful.)

The key point is to obtain a representation of the form
\ali{
B(\Psi,\th) = T[\Psi,\th,\th]/2 &= \int \nb_j \nb_\ell \Psi \, \widetilde{T}^{j\ell}[\th,\th] dx \label{eq:IntegralRep}
}
for some bilinear forms $\widetilde{T}^{j\ell}$ that are symmetric and bounded mapping $\dot{H}^{-1+\de} \times \dot{H}^{-1+\de}$ into $L^1$.  Inspecting \eqref{eq:HLremind}-\eqref{eq:HHremind}, we can write $P_k \nb_{\ell} \Psi = \De^{-1}\nb^j P_k \nb_j \nb_\ell \Psi$, take appropriate adjoint operators, and define $\widetilde{T}^{j\ell} = \widetilde{T}^{j\ell}_{L} + \widetilde{T}^{j\ell}_{H}$, where each form is obtained by polarizing the quadratic forms
\ali{
\widetilde{T}^{j\ell}_{L}[\th,\th] &\sim - \sum_k \sum_{i \leq k-1} \De^{-1} \nb^j P_k \left\{ P_{k+1} \th T^\ell P_i \th + P_i \th \, T^\ell P_{k+1} \th \right\} \label{eq:TLLH} \\
 \widetilde{T}^{j\ell}_{H}[\th,\th] &= \sum_q  P_{\leq q-3}\left\{ B_q^{j\ell}[P_{\approx q} \th]  \right\} \label{eq:THH}
}
The bound $\| \widetilde{T}^{j\ell}[\th,\th] \|_{L^1} \leq \widetilde{C} \| \th \|_{\dot{H}^{-1+\de}}^2$ follows as in the bounds on $LH, HL$, and $HH$ established before, where we use the uniform bounds on the operator norms $\sup_q \| P_{\leq q} \| + \sup_k \| 2^k \De^{-1} \nb P_k \| < \infty$.

Once we have the representation \eqref{eq:IntegralRep}, we can obtain continuity of $T[\cdot,\cdot,\cdot]$ as a mapping from $X \times (\dot{H}^{-1+\de})^2$ into $\R$ with the appropriate product topology (weak-* on $X$ and normed on $\dot{H}^{-1+\de}$) as follows.  Fix a trio $(\Psi_0, \th_0, \phi_0) \in X \times (\dot{H}^{-1+\de})^2$ and let $\ep > 0$ be given.  Letting $\widetilde{C}$ be the constant above and setting $M_X = \sup_{\Psi \in X} \| \nb^2 \Psi \|_{L^\infty}$, we define the following subset of $X \times \dot{H}^{-1 + \de} \times \dot{H}^{-1+\de}$ 
\ALI{
U_{\de_1, \de_2} \coloneqq \Big\{ (\Psi, \th, \phi) : \| \th_0 {-} \th_0 \|_{\dot{H}^{-1+\de}} {+} \| \phi {-} \phi_0 \|_{\dot{H}^{-1+\de}} < \de_1, \left| \int \nb_j \nb_\ell(\Psi {-} \Psi_0) \widetilde{T}^{j\ell}[\th_0,\phi_0] dx \right|  < \de_2 \Big\}.
}
Then $U_{\de_1, \de_2}$ is an open neighborhood of $(\Psi_0, \th_0, \phi_0)$ because the $\widetilde{T}^{j\ell}$ take values in $L^1$ and it is the (finite) product of open neighborhoods in $X$, and $(\dot{H}^{-1+\de})^2$.  If we now choose $\de_1$ and $\de_2$ small enough depending on $\ep$, $\widetilde{C}$, $M_X$, $\| \th_0 \|_{\dot{H}^{-1+\de}}$ and $\| \phi_0 \|_{\dot{H}^{-1+\de}}$, and use the polarization of \eqref{eq:IntegralRep} we have the inequality $|T[\Psi, \th,\phi] - T[\Psi_0, \th_0, \phi_0]| < \ep$ for $(\Psi, \th, \phi) \in U_{\de_1,\de_2}$, which establishes the continuity of $T[\cdot,\cdot,\cdot]$.

\subsection{On the optimal constant of the extension}

For $\de > 0$, define the ``optimal constants'' for the form $B(\cdot,\cdot)$ on the Euclidean space and its extensions 
\ali{
\| B \|_c &= \sup \{ |B(\psi, \th)| ~:~ \| \nb^2 \psi \|_{C^0} \leq 1, \| \th \|_{\dot{H}^{-1+\de}}^2 \leq 1, \psi, \th \in C_c^\infty(\R^2) \} \\
\| B \|_0 &= \sup \{ |B(\Psi, \th)| ~:~ \| \nb^2 \Psi \|_{L^\infty} \leq 1,  \| \th \|_{\dot{H}^{-1+\de}}^2 \leq 1, \Psi \in \dot{W}_0^{2,\infty}, \th \in {\dot H}^{-1+\de}(\R^2) \}. \label{eq:B0Const} \\
\| B \|_{\infty} &= \sup \{ |B(\Psi, \th)| ~:~ \| \nb^2 \Psi \|_{L^\infty} \leq 1, \| \th \|_{\dot{H}^{-1+\de}}^2 \leq 1, \Psi \in \dot{W}^{2,\infty}, \th \in {\dot H}^{-1+\de}(\R^2) \}. \label{sec:BinftyConst}.
}
Here $\dot{W}_0^{2,\infty}$ is the space of distributions with $\nb^2 \Psi(x) = o(1)$ as $|x| \to \infty$, $\nb^2 \Psi \in L^\infty$.  Since $C_c^\infty$ is norm dense in $\dot{H}^{-1+\de}$ (see Appendix Section~\ref{sec:hdots}) and the form $B(\cdot,\cdot)$ is bounded, we can restrict attention to $\th \in C_c^\infty$ in \eqref{eq:B0Const}-\eqref{sec:BinftyConst}.  

It is clear that $\| B \|_c \leq \| B \|_0 \leq \| B \|_{\infty}$ and the reverse inequalities would also be clear from the boundedness of $B$ if $C_c^\infty$ were also norm dense in $\dot{W}^{2,\infty}_0$ or $\dot{W}^{2,\infty}$.  While this strong density is not the case, we do still have $\| B \|_0 = \| B \|_c$.  To see this fact, given $\Psi \in \dot{W}^{2,\infty}_0$ and $\th \in C_c^\infty$, we can choose a sequence $\psi_n \in C_c^\infty$ such that $\nb^2 \psi_n \rightharpoonup \nb^2 \Psi$ in $L^\infty$ weak-* and $\| \nb^2 \psi_n \|_{L^\infty} \to \| \nb^2 \Psi \|_{L^\infty}$ as $n \to \infty$.  (See Appendix Section~\ref{sec:functionSpaces}.)  Then for each such $\Psi \in \dot{W}^{2,\infty}_0$ we have
\ALI{
|B(\Psi, \th)| &= |\lim_{n \to \infty} B(\psi_n, \th) | \leq \liminf_{n \to \infty} \| B \|_c \| \nb^2 \psi_n \|_{C^0} \| \th \|_{\dot{H}^{-1+\de}} = \| B \|_c \| \nb^2 \Psi \|_{L^\infty} \| \th \|_{\dot{H}^{-1+\de}}.
}
This inequality shows that $\| B \|_0 \leq \| B \|_c$, so the two constants are equal.

For $\Psi \in \dot{W}^{2,\infty}$, it is not clear whether such a sequence exists, and we do not know whether $\| B \|_\infty = \| B \|_c$.  (The approximating sequence $\psi_n$ in Section~\ref{sec:ExtensionDef} does satisfy $\| \nb^2 \psi_n \|_{L^\infty} \lesssim \|\nb^2 \Psi \|_{L^\infty}$, but may not satisfy $\| \nb^2 \psi_n \|_{L^\infty} \to \| \nb^2 \Psi \|_{L^\infty}$ as $n \to \infty$.)  However, the above argument does show the equality of constants $\| B \|_\infty = \| B \|_c$ in the periodic setting, where we restrict to mean-zero $\th \in C^\infty(\T^2)$.

\subsection{Special properties and conservation laws of mSQG} \label{sec:mSQGSpecial}

In this Section, we now prove the estimates that are unique to SQG, 2D Euler and the mSQG family, which stem from the following Theorem:
\begin{thm}\label{thm:divForm} Let $m^\ell = \ep^{\ell a} \nb_a (-\De)^{-1+\de}$ be the mSQG multiplier, $\de \geq 0$, and $T^\ell$ denote the corresponding operator.  Then there is a bounded bilinear operator $\widetilde{T}^{j\ell}$ mapping $\dot{H}^{-1 + \de} \times \dot{H}^{-1+\de}$ to $L^1$ that takes values in the space of symmetric {\bf trace-free} tensor fields such that
$\nb_\ell[\th T^\ell \th] = \nb_\ell \nb_j \widetilde{T}^{j\ell}[\th, \th]$ in the sense of distributions.
\end{thm}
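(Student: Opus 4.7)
My plan is to extend the bilinear anti-divergence construction of Section~\ref{sec:deriveBilinear} from the high-high frequency interactions to all frequency interactions, and then to observe that the trace-free property of the kernel, already recorded in the remark at the end of that section, is inherited by the resulting tensor $\widetilde{T}^{j\ell}$. The key structural input is that $\theta T^\ell \theta$ is symmetric under interchange of its two $\theta$ arguments, so the symmetrization trick used in Section~\ref{sec:deriveBilinear} to derive $Q_q^\ell = \nabla_j B_q^{j\ell}$ applies to the full nonlinearity, not only to the piece with comparable frequencies.

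Concretely, I would first perform a Littlewood--Paley decomposition $\theta T^\ell \theta = \sum_{q, q'} P_q \theta \, T^\ell P_{q'} \theta$ and, for each dyadic pair, symmetrize under interchange of the two factors. In Fourier this produces integrands of the form
$\tfrac{1}{2}[m^\ell(\eta) + m^\ell(\xi - \eta)]\widehat{P_q \theta}(\xi - \eta)\widehat{P_{q'}\theta}(\eta)$
together with appropriate dyadic cutoffs. Oddness of $m^\ell$ and Taylor's theorem give
$m^\ell(\eta) + m^\ell(\xi - \eta) = m^\ell(\eta) - m^\ell(\eta - \xi) = \xi_j \int_0^1 \nabla^j m^\ell(u_s)\, ds$,
which extracts the factor $\xi_j$ needed to produce a divergence in physical space. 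By \eqref{eq:itsTraceFree}, the symbol $\nabla^j m^\ell(\xi)$ is trace-free in $(j, \ell)$ precisely for the mSQG multiplier, so the resulting bilinear operator $\widetilde{B}^{j\ell}$ is automatically trace-free; symmetrizing $\widetilde{B}^{j\ell}$ over $(j,\ell)$ preserves trace-freeness and does not alter the identity $\nabla_j \nabla_\ell \widetilde{B}^{j\ell} = \nabla_\ell[\theta T^\ell \theta]$ since $\nabla_j \nabla_\ell$ is symmetric in the contracted indices.

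For the $L^1$ bound I would split the sum over $(q, q')$ into the high-high part ($|q - q'| \leq 1$) and the high-low plus low-high parts. The HH contribution is controlled exactly as in Section~\ref{sec:deriveBilinear}: the kernel scaling $\|\widetilde{K}_q\|_{L^1(\R^2 \times \R^2)} \lesssim 2^{2(-1+\delta)q}$, followed by summation against $\|P_{\approx q}\theta\|_{L^2}^2$, yields $\lesssim \|\theta\|_{\dot{H}^{-1+\delta}}^2$. The HL and LH pieces, after symmetrization, admit the same Taylor representation but with mixed cutoffs $\hat{\chi}_{k+1}(\zeta)\hat{\chi}_{\leq k-1}(\eta)$ in place of $\hat{\chi}_q(\zeta)\hat{\chi}_q(\eta)$, and the summation across levels is closed by a Schur-type test modeled on the bounds \eqref{eq:HL1bd}--\eqref{eq:IHL2bd}.

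The hardest step I expect is controlling the kernel $L^1$ norm in the off-diagonal regime. When $|\zeta| \sim 2^k$ and $|\eta| \lesssim 2^{k-1}$, the line $u_s = s\zeta - (1-s)\eta$ can pass near $0$ for $s$ close to $0$, where $\nabla^j m^\ell$ is singular with homogeneity $|\nabla^j m^\ell(u)| \lesssim |u|^{-2+2\delta}$. The $\int_0^1 ds$ integration must be handled with care: away from the singular regime one recovers the scaling $2^{2(-1+\delta)k}$ needed for summability across dyadic levels, while in the singular regime one exploits the smallness of the low-frequency variable $\eta$ (enforced by $\hat{\chi}_{\leq k-1}(\eta)$) to keep the integral tame. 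Once these dyadic kernel estimates are in hand, the trace-free property of the assembled $\widetilde{T}^{j\ell}$ follows uniformly from the pointwise identity $\delta_{j\ell}\nabla^j m^\ell(\xi) = 0$ recorded in \eqref{eq:itsTraceFree}.
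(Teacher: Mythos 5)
Your plan diverges from the paper's proof exactly at the high--low and low--high interactions, and there the sketch has a genuine gap. You propose to Taylor-expand the \emph{full} homogeneous multiplier along $u_\sigma = \sigma\zeta - (1-\sigma)\eta$, with a cutoff $\hat{\chi}_{k+1}(\zeta)\hat{\chi}_{\leq k-1}(\eta)$, and to control the resulting kernel in $L^1$. Within that cutoff region the segment from $u_0=-\eta$ to $u_1=\zeta$ passes through the origin whenever $\zeta$ and $\eta$ are parallel with the same orientation (solve $\sigma\zeta = (1-\sigma)\eta$), a codimension-one locus inside the support. Since $\nabla^a m^\ell$ is degree $(-2+2\delta)$ homogeneous, $|\nabla m(u)|\sim|u|^{-2+2\delta}$, and near a crossing $|u_\sigma|\approx |\zeta+\eta|\,|\sigma-\sigma_0|$, so $\int_0^1|u_\sigma|^{-2+2\delta}\,d\sigma$ diverges at such $(\zeta,\eta)$ for every $\delta \leq 1/2$ --- in fact the fundamental theorem of calculus you are invoking is simply false there, because the integrand is not integrable along the path. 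The remedy you suggest, exploiting the smallness of $|\eta|$, does not help: shrinking $|\eta|$ only moves the starting point $-\eta$ closer to the singularity of $\nabla m$. So the frequency-space kernel is unbounded (pointwise infinite on the singular locus), and the $L^1$ bound you need is not obtained by the mechanism you describe.

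The paper avoids this entirely by never Taylor-expanding the multiplier in the HL/LH sector. It uses that $P_{k+1}\theta\,T^\ell P_j\theta$ for $j<k$ has output frequency $\sim 2^k$, so the needed derivative comes for free from the test-function side via $P_k\nabla_\ell\psi = \Delta^{-1}\nabla^a P_k\,\nabla_a\nabla_\ell\psi$; this produces the tensor $\widetilde T_L^{j\ell}$ of \eqref{eq:TLLH}, whose $L^1$ bound closes by the same Schur test as \eqref{eq:HL1bd}--\eqref{eq:IHL2bd}. That $\widetilde T_L$ is \emph{not} trace-free, and the second idea you are missing is how the paper fixes this: post-compose with the zero-order operator $\mathcal{R}^{j\ell}\nabla_a\nabla_b$, where $\mathcal{R}^{j\ell}[f]=A\,\delta^{j\ell}\Delta^{-1}f + B\,\Delta^{-2}\nabla^j\nabla^\ell f$ with $A+B=1$, $dA+B=0$. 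This map takes values in symmetric trace-free tensors, satisfies $\nabla_j\nabla_\ell\,\mathcal{R}^{j\ell}\nabla_a\nabla_b f = \nabla_a\nabla_b f$ for $f$ with frequency support away from $0$, and because $\mathcal{R}^{j\ell}\nabla_a\nabla_b\,\Delta^{-1}\nabla^a P_k = \mathcal{R}^{j\ell}\nabla_b P_k$ has the same smoothing as $\Delta^{-1}\nabla^jP_k$, the correction costs nothing in the $L^1$ estimate. Only the high--high piece \eqref{eq:THH} is handled by the Taylor/oddness mechanism --- safely, because there the multiplier is pre-cut to the annulus $|\xi|\sim 2^q$, so $\nabla^a m_q^\ell$ vanishes where the Taylor line nears the origin --- and it is already trace-free by \eqref{eq:itsTraceFree}. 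A Taylor-expansion route for HL/LH can work, but only after dyadically decomposing $m=\sum_r m_r$ \emph{before} expanding so that each $\nabla^a m_r$ is bounded, at the cost of a triple sum over $(k,j,r)$; the footnote in Section 6.4 indicates this variant (decomposing $\theta T^\ell\theta$ rather than $\psi$) is carried out elsewhere, but it is not what your proposal sets up, and your proposal lacks the $\mathcal{R}$ correction that makes the paper's argument close.
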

This theorem implies the bound in \eqref{thm:oddBound} by integration by parts, and thus concludes the proof of Theorem~\ref{thm:oddBound}.  We note that in the case of the Euler equations ($\de = 0$), the theorem follows easily from the derivation of the Euler equations in vorticity form.  Indeed, in this case the nonlinearity is the curl of the divergence of $T\th \otimes T \th$, which is the second order divergence of a trace-free tensor field ($\nb_j \nb_\ell[ T^j \th \ep^{\ell a} T_a \th]$, where the Biot-Savart law $T^\ell \th = \ep^{\ell a} \nb_a \De^{-1} \th$ is bounded from $\dot{H}^{-1}$ to $L^2$).  The condition of symmetry can then be enforced by replacing this tensor with its symmetric part using the formula $\nb_j \nb_\ell A^{j\ell} = \nb_j \nb_\ell A_S^{j\ell}$, $A_S^{j\ell} = \fr{1}{2} (A^{j\ell} + A^{\ell j})$.  For SQG and more generally mSQG with $\de > 0$, Theorem~\ref{thm:divForm} is apparently much less immediate.

In our analysis so far, the key properties we have used to bound the nonlinearity are the homogeneity and oddness of the multiplier.  These properties are only enough to construct a bilinear form $\widetilde{T} = \widetilde{T}_L + \widetilde{T}_H$ defined in \eqref{eq:TLLH}-\eqref{eq:THH} that has the desired boundedness but does not necessarily take on trace-free values.

We derive the desired trace-free bilinear form $\widetilde{T}$ by examining the derivations of equations \eqref{eq:TLLH}-\eqref{eq:THH} in the specific case of mSQG\footnote{We remark that one can alternatively obtain the double-divergence form by decomposing $\th T^\ell \th$ without decomposing $\psi$, as is used in \cite{isettLooi}.}.  These equations give a representation $\nb_\ell[\th T^\ell \th] = \nb_j \nb_\ell[\widetilde{T}_L^{j\ell} + \widetilde{T}_H^{j\ell}]$.  Let us first consider the term $\widetilde{T}_L^{j\ell}$ that arises from interactions involving low frequencies.  This term may not be trace free, but there exists a trace-free tensor field $\widetilde{T}_{L2}^{j\ell}$ with the same second order divergence and the same boundedness properties.


To obtain this tensor field, consider the following operator ${\cal R}$, which acts on scalar fields and inverts the second order divergence operator while taking values in the space of symmetric, trace-free tensor fields: ${\cal R}^{j\ell}[f] = A \de^{j\ell} \De^{-1} f + B \De^{-2} \nb^j \nb^\ell f$.   The constants $A$ and $B$ solve $A+ B= 1$, $d A+ B = 0$ so that ${\cal R} f$ is trace free and solves $\nb_j\nb_\ell {\cal R}^{j\ell} f = f$ for $f$ with Fourier support in an annulus excluding $0$.  

We now wish to define $\widetilde{T}_{L2}^{j\ell} \equiv {\cal R}^{j\ell} \nb_a \nb_b \widetilde{T}_L^{ab}$ in the hope that we will have $\nb_j\nb_\ell\widetilde{T}_{L2}^{j\ell} = \nb_j \nb_\ell \widetilde{T}_L^{j\ell}$.   However, the fact that the zeroth order operator ${\cal R}^{j\ell} \nb_a \nb_b$ is unbounded on $L^1$ makes it unclear this bilinear form is bounded from $\dot{H}^{-1 + \de} \times \dot{H}^{-1 + \de}$ into $L^1$, or even that it maps into the space of distributions.  
Nonetheless, the boundedness of $\widetilde{T}_{L2}$ is true: ${\cal R}^{j\ell} \nb_a \nb_b \De^{-1} \nb^a P_k = {\cal R}^{j\ell} \nb_b P_k$ satisfies estimates similar to $\De^{-1} \nb^j P_k$ (since ${\cal R}$ has a symbol that is degree $-2$ homogeneous and smooth away from the origin), and
\ali{
\widetilde{T}^{j\ell}_{L2}[\th,\th] &\sim - \sum_k \sum_{i \leq k-1} {\cal R}^{j\ell} \nb_a \nb_b\De^{-1} \nb^a P_k \left\{ P_{k+1} \th T^b P_i \th + P_i \th \, T^b P_{k+1} \th \right\}
}
has the same structure as $\widetilde{T}_L$, so that the same boundedness argument based on Schur's Test applies (modifying the proofs of lines \eqref{eq:HL1bd} and \eqref{eq:IHL2bd}), and the equality of the second order divergences is also guaranteed.

This last argument does not apply to $\widetilde{T}_H$ and does not use the full structure of the mSQG multiplier.  The unique feature of the mSQG multiplier is that the $\widetilde{T}_H^{j\ell}$ defined in \eqref{eq:THH} is in fact already trace-free.  Namely, thanks to formulas \eqref{eq:BqisKqof} and \eqref{def: hat K }, this trace-free property follows from the calculation of line \eqref{eq:itsTraceFree}, which shows that the Jacobian of the frequency-truncated multiplier is trace-free.  From the identity $\nb_\ell[\th T^\ell \th] = \nb_j \nb_\ell[ \widetilde{T}_{L2}^{j\ell} + \widetilde{T}_{HS}^{j\ell}]$, where $\widetilde{T}_{HS}^{j\ell}$ is the symmetric part of $\widetilde{T}_H$, we conclude Theorem~\ref{thm:divForm}.

We now explain how Theorem~\ref{thm:divForm} quickly implies the conservation of impulse and angular momentum for general weak solutions to mSQG, whose Hamiltonian is locally integrable.  The proof is based on the following Lemma:
\begin{lem} \label{lem:conserveLem} Let $Q(x)$ be a polynomial on $\R^2$ of the form $A + B x_1 + Cx_2 + D(x_1^2 + x_2^2)$, so that the trace free part of the Hessian of $Q$ vanishes.  Let $\mrg{F}^{j\ell}$ be a trace-free tensor field in $L^1(\R^2)$ and let $\chi \in C_c^\infty(\R^2)$ be equal to $1$ in a neighborhood of the origin.  Then 
\ALI{
 \lim_{R \to \infty} \int_{\R^2} \mrg{F}^{j\ell} \nb_j \nb_\ell( \chi(x/R) Q(x) ) dx = 0
}
\end{lem}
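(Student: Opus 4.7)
The plan is to expand $\nb_j\nb_\ell(\chi(x/R) Q(x))$ using the Leibniz rule into four pieces: the piece where both derivatives fall on $Q$, the two mixed pieces where one derivative falls on each factor, and the piece where both derivatives fall on $\chi(x/R)$. The term where both derivatives fall on $Q$ is the key one to dispose of first: since $Q$ has the form $A + Bx_1 + Cx_2 + D|x|^2$, its Hessian is $\nb_j\nb_\ell Q = 2D\,\de_{j\ell}$, which is pure trace. Contracting against $\mrg{F}^{j\ell}$, which is trace-free, annihilates this term identically in $R$.

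It therefore suffices to show that the three remaining pieces tend to zero as $R \to \infty$. Writing $\chi_R(x) = \chi(x/R)$, we have the pointwise bounds $|\nb \chi_R(x)| \lesssim R^{-1}$ and $|\nb^2 \chi_R(x)| \lesssim R^{-2}$, and, more importantly, $\nb \chi_R$ and $\nb^2 \chi_R$ are supported in $\{|x| \gtrsim R\}$ because $\chi \equiv 1$ near the origin. On this support we have $|Q(x)| \lesssim R^2$ and $|\nb Q(x)| \lesssim R$, so each of the three remaining integrands is bounded pointwise by a constant independent of $R$. Concretely:
\ali{
\left| \mrg{F}^{j\ell}(x)\bigl[(\nb_j\nb_\ell \chi_R)\, Q + \nb_j\chi_R\,\nb_\ell Q + \nb_\ell\chi_R\,\nb_j Q\bigr] \right| \lesssim |\mrg{F}^{j\ell}(x)| \cdot \mathbf{1}_{\{|x| \gtrsim R\}}. \notag
}

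Since $\mrg{F}^{j\ell} \in L^1(\R^2)$, dominated convergence (applied to the indicator $\mathbf{1}_{\{|x| \gtrsim R\}}$, which converges pointwise to zero) implies that the integral of the right-hand side tends to $0$ as $R \to \infty$. This concludes the proof; the only nontrivial input was the trace-free structure of both $\mrg{F}$ and the Hessian of $Q$, which removed the one term that would otherwise have contributed a nonzero limit. The main obstacle was simply recognizing how the trace-free assumption on $\mrg{F}^{j\ell}$ interacts with the ``pure trace'' Hessian of $Q$; once this is observed, everything else is a routine cutoff argument exploiting the $L^1$ assumption.
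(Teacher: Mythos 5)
Your proof is correct and follows essentially the same route as the paper: the contraction of the trace-free $\mrg{F}^{j\ell}$ against the pure-trace Hessian $\nb_j\nb_\ell Q = 2D\,\de_{j\ell}$ kills the main term pointwise, and the cutoff terms are bounded pointwise by $\lesssim |\mrg{F}|$ supported in $|x|\sim R$, so dominated convergence finishes the argument. No gaps.
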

Indeed, the term $\mrg{F}^{j\ell} \nb_j \nb_\ell Q = 0$ vanishes pointwise, since it is equal to $\mrg{F}^{j\ell}\mrg{\nb}^2_{j\ell} Q$, and the trace-free part of the Hessian $\mrg{\nb}^2_{j\ell} Q = \nb_j\nb_\ell Q - (1/d) \De Q \de_{j\ell}$ vanishes.  When the derivatives hit the cutoff, we can bound $|\mrg{F}| |\nb[\chi(\cdot/R)]| |\nb Q|$ and $|\mrg{F}| |\nb^2 [\chi(\cdot/R)]| |Q|$ pointwise by $\lesssim |\mrg{F}| \in L^1(\R^2)$.  We then apply the dominated convergence theorem using the fact that the derivatives of the cutoff are supported in $|x| \sim R$, to conclude that these latter terms tend to zero as $R \to \infty$.

Now let $\th \in L_t^2 \dot{H}^{-1 + \de}$ be a weak solution to mSQG on the time interval $[0,T]$ and fix $t_0 \in (0,T]$.  We define the restriction to time $t_0$, $\th(t_0,\cdot)$, as the unique distribution on $\R^2$ that satisfies
\ali{
\th(t_0,x) &= \th(t,x) - \int_{t_0}^t \nb_j \nb_\ell \widetilde{T}^{j\ell}[\th(s), \th(s)] ds,  \label{eq:restrictTime}
}
where $\widetilde{T}$ is the bilinear form of Theorem~\ref{thm:divForm}.  The equation $\pr_t \th + \nb_\ell(\th T^\ell \th) = 0$ implies that the element of $\DD'((0,T) \times \R^2)$ defined by the right hand side satisfies $\pr_t \th(t_0, x) = 0$ in $\DD'((0,T) \times \R^2)$, so we can identify it with a unique element of $\DD'(\R^2)$.

In particular, taking the difference $\th(t, \cdot) - \th(0, \cdot)$, we have
\ali{
\th(t, x) &= \th(0,x) + \nb_j \nb_\ell \mrg{F}^{j\ell}_{t}(x), \qquad \mrg{F}_t^{j\ell} = - \int_0^t \widetilde{T}^{j\ell}[\th(s), \th(s)] ds.
}
From $\th \in L_t^2 \dot{H}^{-1+\de}$ and Theorem~\ref{thm:divForm}, we have $\mrg{F}_t \in L^1(\R^2)$ for any $t$, so that Lemma~\ref{lem:conserveLem} applies to give
\ali{
\lim_{R \to \infty} \int_{\R^2} \th(t,x) Q(x) \chi(x/R)  dx &= \lim_{R \to \infty} \int_{\R^2} \th(0,x) Q(x) \chi(x/R) dx, \label{eq:momentConserve}
}
for all polynomials whose Hessian has vanishing trace-free part, which includes the polynomials $x_1^2 + x_2^2$ and $x_i$ that (up to constants) define angular momentum and the components of impulse.  In particular the left hand limit is defined if and only if the right hand limit is defined, where both integrals are interpreted in the sense of distributions, viewing $Q(x)\chi(x/R)$ as a test function.

Note that even if the limit \eqref{eq:momentConserve} does exist, it may a priori depend on the cutoff function $\chi$.  A sufficient condition for the limit to exist and to be independent of $\chi$ is that the initial condition has the form $\th(0,x) = f(x) + \nb_j g^j(x) + \nb_j \nb_\ell h^{j\ell}(x)$ for tensors $f, g$ and $h$ such that $|x|^2 |f|, |x| |g|$, and $|h|$ belong to $L^1$.  The proof is similar to that of Lemma~\ref{lem:conserveLem}.



\section{Sharpness of the estimate on the nonlinearity} \label{sec:Sharpness}

Let $B(\psi, \th)$ be the multilinear form associated to mSQG and let $\| \, \|_X$ be a semi-norm that satisfies
\ali{
|B(\psi, \th)| \lesssim \| \psi \|_X \| \th \|_{H^{-1+\de}}^2 \label{eq:assumeBdX}
}
We have shown that the $\mrg{W}^{2,\infty}$ semi-norm $\| \psi \|_X = \| \mrg{\nb}^2 \psi \|_{L^\infty}$ satisfies the above estimate.  We now prove Theorem~\ref{thm:sharpXBd}, which states that every such semi-norm $X$ that satisfies \eqref{eq:assumeBdX} must control the $\mrg{W}^{2,\infty}$ semi-norm, meaning that the estimate we have proven is sharp (when we assume only $H^{-1+\de}$ control of the scalar).

The proof proceeds by a geometric optics technique that is related to the convex integration construction in \cite{buckShkVicSQG, isett2021direct}, but with a different normalization that extracts an approximate delta function from the bilinear form.  In particular we employ different techniques to estimate the error.

Let $\psi(x_1,x_2)$ be an arbitrary $C_c^\infty$ test function.  Consider the sequence 
\ali{
\begin{split}
\Th_\la = P_{\la} F_\la &\equiv \la^{1-\de} P_{\la} (e^{i \la x_1} \la^{1/2} \varphi(\la^{1/2} x) ), \label{eq:Thladef} \\ 
\bar{\Th}_\la = P_{-\la} \bar{F}_\la &\equiv \la^{1-\de} P_{-\la} (e^{- i \la x_1} \la^{1/2} \varphi(\la^{1/2} x) ),  
\end{split}
}
where $P_{\la}$ is a convolution operator whose symbol $\hat{u}_{\approx \la}$ is a bump function of size $1$ adapted to the frequency region $|\xi - \la e_1| \leq \la/3$. The family $\th_\la = \Th_\la + \bar{\Th}_\la$ is then real-valued.  Let us normalize $\varphi$ to be a bump function supported in the unit ball in physical space with $\int \varphi^2 dx = 1$.  Observe that the family $\th_\la$ is then normalized to have size $\sim 1$ in $\dot{H}^{-1+\de}$, so by our assumption on the $X$ seminorm we have $\sup_\la |B(\psi, \th_\la)| \lesssim \| \psi \|_X$.

Our claim is that 
\ali{
\lim_{\la \to \infty} B(\psi, \th_\la) = 2(-1 + \de) \pr_1 \pr_2 \psi(0,0), \label{claim:geomOptics}
}
hence $|\pr_1 \pr_2 \psi(0,0)| \lesssim \| \psi \|_X$ according to \eqref{eq:assumeBdX}.  By performing a rotation we can modify the construction to show $|(\pr_1 - \pr_2)(\pr_1 + \pr_2)\psi(0,0)| \lesssim \| \psi \|_X$, which implies $|\mrg{\nb}^2 \psi(0,0)| \lesssim \| \psi \|_X$ when combined with \eqref{claim:geomOptics}.  By translation invariance, we conclude that $\| \mrg{\nb}^2 \psi \|_{L^\infty} \lesssim \| \psi\|_X$ as desired.

We have thus proven Theorem~\ref{thm:sharpXBd} under the assumption of claim~\eqref{claim:geomOptics}, whose proof occupies the remainder of this section.  We start by expanding
\ali{
B(\psi, \th_\la) = B(\psi, \Th_\la + \bar{\Th}_\la) &= B(\psi, \Th_\la) + B(\psi, \bar{\Th}_\la) \label{eq:highFreqTerms0} \\
&+ T(\psi, \Th_\la, \bar{\Th}_\la) + T(\psi, \bar{\Th}_\la, \Th_\la), \label{eq:interactionLowFreqTerms}
}
with $T$ is the usual trilinear form associated to $B$.  

The terms of line~\eqref{eq:highFreqTerms0} converge weakly to zero.  For example, by Plancharel we have
\ALI{
B(\psi, \Th_\la) &= \int_{\hat{\R}^2} \hat{\psi} \left(\widehat{\Th_\la} \ast \widehat{T^\ell}[\Th_\la] \right) d\xi
}
Due to the projection $P_{\approx \la}$ in \eqref{eq:Thladef}, the convolution is supported in frequencies of size $\sim \la$.  It is also uniformly bounded by a polynomial in frequency.  Since $\hat{\psi}$ is Schwartz, we conclude that this term, and the similar term $B(\psi, \bar{\Th}_\la)$, tend to $0$ by dominated convergence.

We now isolate the main term of line \eqref{eq:interactionLowFreqTerms} and show that it converges to \eqref{claim:geomOptics}.  We express \eqref{eq:interactionLowFreqTerms} more fully as
\ali{
\eqref{eq:interactionLowFreqTerms} &= \int \psi (T^\ell \Th_\la \nb_\ell \bar{\Th}_\la + T^\ell \bar{\Th}_\la \nb_\ell \Th_\la) dx \label{eq:againstTestFunction}
}
We now exploit the divergence form of the symmetric interaction from Section~\ref{sec:deriveBilinear}.
\ali{
T^\ell \Th_\la \nb_\ell \bar{\Th}_\la &+ T^\ell \bar{\Th}_\la \nb_\ell \Th_\la = \nb_j \nb_\ell B_{\la}^{j\ell}[\Th_\la, \bar{\Th}_\la] \notag \\
B_{\la}^{j\ell}[\Th_\la, \Th_\la] &= \int F_\la(x - h_1) \bar{F}_\la(x - h_2) \tilde{K}_\la^{j\ell}(h_1,h_2) dh_1 dh_2 
\label{eq:bilinearFormConvolve1} 
}
In making this decomposition, we have absorbed the operator $P_\la \otimes P_{-\la}$ in the definition~\eqref{eq:Thladef} into the kernel $\tilde{K}_\la$, which is Schwartz, similar to line \eqref{def: Bilinear form}.  Doing so results in appearances of the bump function $\hat{u}$ in the following formulas:
\ali{
\hat{K}_\la^{j\ell}(\zeta, \eta) &= \int_0^1 \nb^j m_\la^\ell(\si \zeta - (1-\si) \eta) \hat{u}(\zeta) \hat{u}(-\eta) d\si \\
m_\la^\ell(\xi) &= i \ep^{\ell a} \xi_a |\xi|^{2(-1 + \de)} \chi_{\approx \la}(\xi) \notag \\
\nb^j m_\la^\ell(\xi) &= 2 (-1 + \de) i \ep^{\ell a} \xi_a \xi^j |\xi|^{2(-2 + \de)} \chi_{\approx \la} + ... + O(\ep^{\ell j}) 
}
The anti-symmetric part, which is proportional to $\ep^{\ell j}$, will not be important in our calculations, and we have also neglected the term where the derivative hits the cutoff function $\chi_{\approx \la}$ since it will also vanish in the forthcoming computation of the main term.

Using coordinates $h_1 = (h_1^1, h_1^2)$ and $h_2 = (h_2^1, h_2^2)$ on $\R^2 \times \R^2$, we now obtain a leading order term expansion of \eqref{eq:bilinearFormConvolve1}
\ali{
\eqref{eq:bilinearFormConvolve1} &= \la^{2-2\de} \int e^{i \la h_1^1} e^{-i \la h_2^1} \la \varphi^2(\la^{1/2} x) \tilde{K}_\la^{j\ell}(h_1,h_2) dh_1 dh_2   \label{eq:leadingTerm} \\
&+ \underbrace{ \la^{2-2\de} \int e^{i (\la h_1^1 - \la h_2^1)} \int_0^1 \fr{d}{d\si} \la \varphi(\la^{1/2}(x - \si h_1)) \varphi(\la(x - \si h_2)) \tilde{K}_\la^{j\ell}(h_1,h_2) dh_1 dh_2 d\si}_{\equiv E_\la} \label{eq:errorTermEla}
}
Let us first isolate the leading term, \eqref{eq:leadingTerm}, and show how it leads to \eqref{claim:geomOptics}.  Here we recognize that the terms involving $\chi$ do not depend on $(h_1, h_2)$ and what appears is the Fourier transform of $\tilde{K}$.   We have
\ali{
\eqref{eq:leadingTerm}^{j\ell} &= \la^{2 - 2\de}  \la \varphi^2(\la^{1/2} x) \widehat{\tilde{K}^{j\ell}}(\la e_1, - \la e_1) \\
&= 2(-1 + \de) \la^{2 - 2\de}  \la \varphi^2(\la^{1/2} x) \de^j_1 \de^{\ell}_2 \la^{2(-1 + \de)} + O(\ep^{\ell j}) 
}
Here we have used that $\chi_{\approx \la}(\xi)$ and the $\hat{u}$ factors are equal to $1$ in an $O(\la)$ neighborhood of $\la e_1$; in particular, the factor $\nb^j \chi_{\approx \la} $ vanishes at $\la e_1$.

Returning to \eqref{eq:againstTestFunction}, we integrate the main term against the test function $\psi$ to obtain
\ali{
\int \psi \nb_j \nb_\ell \eqref{eq:leadingTerm} dx &= 2 (-1 + \de) \int \nb_j \nb_\ell \psi \la \varphi^2(\la^{1/2} x) \de^j_1 \de^{\ell}_2 dx \\
&= 2(-1 + \de) \pr_1\pr_2 \psi(0,0) + o(1) \quad \mbox{ as $\la \to \infty$}. \label{eq:mainTermRevealed}
}
Hence the leading term establishes the key claim~\eqref{claim:geomOptics}.  It remains to treat the error term $E_\la$ from \eqref{eq:errorTermEla}.

We now treat the error term $E_\la$ from \eqref{eq:errorTermEla}.  We claim that $E_\la$ converges to $0$ in $L^1$.  To see this, we apply Fubini-Tonelli and apply Cauchy-Schwartz, using the translation invariance of the $L^2$ norm, to obtain
\ALI{
&\int |E_\la(x)| dx \lesssim \la^{2 - 2 \de + 1 + \fr{1}{2}} \int_0^1 \left[ \int |\nb \varphi(\la^{1/2} (x - \si h_1))| |\varphi(\la^{1/2}(x - \si h_2))| dx \right] |h| |\tilde{K}_\la(h_1, h_2)| d h_1 dh_2 d\si \\
&\lesssim  (\la^{1/2} \| \nb \varphi(\la^{1/2} \cdot) \|_{L^2}) \cdot (\la^{1/2} \| \varphi(\la^{1/2} \cdot) \|_{L^2}) \cdot \la^{2 - 2 \de + 1/2} \int |h| |\tilde{K}_\la(h_1, h_2)| d h_1 dh_2 \lesssim (1) \cdot (1) \cdot \la^{-1/2}.
}
The last bound follows due to the scaling of the kernel $\tilde{K}_\la(h_1, h_2) = \la^{-2 + 2 \de} \la^{2d} \tilde{K}_1(\la h_1, \la h_2)$, which is Schwartz.  Note that the estimate relies crucially on the gain of $\la^{-1}$ coming from the small factor of $|h|$ and scaling.  Thus $\| E_\la \|_{L^1} = O(\la^{-1/2})$ goes to $0$ as $\la \to \infty$.

\section{Uniqueness of the main estimate for the mSQG multiplier} \label{sec:uniqueness}


In this section we show that the special estimate for the mSQG nonlinearity in Theorem~\ref{thm:oddBound} is not satisfied by any other active scalar nonlinearities with the same translation and scaling symmetries other than the constant multiples of the mSQG nonlinearity.

Consider an active scalar nonlinearity on $\R^2$, which is a nonlinearity of the form 
\ALI{
B(\psi, \th) \equiv \int_{\R^2} \psi T^\ell \th \nb_\ell \th dx
} 
initially defined for $\th$ in the space $\SS_0$ of Schwartz functions with compact Fourier support away from the origin and for $\psi \in C_c^\infty$.  We restate here the theorem we are aiming to prove for the convenience of the reader.
\begin{thm}  Assume that the operator $T^\ell$ has a Fourier multiplier $m^\ell$ that is homogeneous of degree $-1 + 2 \de$, $\de \geq 0$, $\de \neq 1$.  If the nonlinearity $B(\psi, \th)$ satisfies the bound
\ali{
|B(\psi, \th)| &\lesssim \| \mrg{\nb}^2 \psi \|_{L^\infty} \| \th \|_{H^{-1+\de}}^2 \label{eq:specialBound} \tag{S}
}
for $\psi \in C_c^\infty(\R^2)$ and $\th \in \SS_0(\R^2)$, then $T^\ell$ is a constant multiple of the mSQG Biot Savart law.  
\end{thm}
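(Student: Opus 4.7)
The plan is to combine the geometric-optics technique of Section~\ref{sec:Sharpness} with the extension of the bound $(S)$ to polynomial test functions provided by Section~\ref{sec:ExtensionDef}. Decompose $m^\ell = m_R^\ell + i m_I^\ell$ with $m_R, m_I$ real-valued; the standing real-symmetry $m(-\xi) = \overline{m(\xi)}$ (needed so that the nonlinearity is real on real scalars) forces $m_R$ even and $m_I$ odd in $\xi$. I will show, in this order, (I) $\xi \cdot m_I(\xi) \equiv 0$, (II) $m_R \equiv 0$ (here $\delta \neq 1$ is essential), and (III) $\nabla_\xi \cdot m_I \equiv 0$. Together with degree $-1+2\delta$ homogeneity these force $m_I = c\,|\xi|^{-2+2\delta}\xi^\perp$, i.e.\ $m$ is a scalar multiple of the mSQG multiplier.

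For Step (I) I would rerun the wavepacket construction $\theta_\lambda^{(e)}$ from Section~\ref{sec:Sharpness}, taking care not to assume oddness of $m$. A direct stationary-phase computation of the cross interaction $T^\ell \Theta_\lambda^{(e)} \nabla_\ell \bar\Theta_\lambda^{(e)} + T^\ell \bar\Theta_\lambda^{(e)} \nabla_\ell \Theta_\lambda^{(e)}$ identifies the leading-order asymptotic as
\[
B(\psi, \theta_\lambda^{(e)}) \;=\; -c\, \lambda^{2}\, \psi(0)\, e_\ell\, m_I^\ell(e) \;+\; o(\lambda^{2}),
\]
with $c \neq 0$; the would-be $O(\lambda^{2})$ contribution from the even part $m_R$ cancels by the parity identity $m_R(\lambda e) - m_R(-\lambda e) = 0$. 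Hypothesis $(S)$ combined with $\|\theta_\lambda^{(e)}\|_{\dot H^{-1+\delta}} \sim 1$ keeps the left side bounded in $\lambda$; choosing $\psi$ with $\psi(0) \neq 0$ forces $e \cdot m_I(e) = 0$ for every $e \in S^{1}$, hence $\xi \cdot m_I(\xi) \equiv 0$ by homogeneity.

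For Steps (II) and (III) I would exploit that $(S)$ extends to test functions with $\mathring{\nabla}^{2} \psi \equiv 0$: for $\psi \in \{x_i, |x|^{2}\}$, the cutoffs $\psi_n := \chi(x/n)\,\psi$ belong to $C_c^\infty$ and satisfy $\|\mathring{\nabla}^{2} \psi_n\|_{L^\infty} \to 0$ as $n \to \infty$, so $(S)$ yields $B(x_i, \theta) = B(|x|^{2}, \theta) = 0$ for every $\theta \in \dot H^{-1+\delta}$. Applying the vanishing $B(x_i, \theta) = 0$: integration by parts gives $B(x_i,\theta) = -\int T^{i}\theta\cdot\theta\,dx - \int x_i (\nabla_\ell T^\ell \theta)\cdot \theta\,dx$, and plugging in Step~(I) (which makes $\nabla_\ell T_I^\ell = 0$ and leaves only $\nabla_\ell T_R^\ell$) reduces this to
\[
\int_{\widehat{\R}^{2}} \bigl[\xi_\ell\, \partial_{\xi_i} m_R^\ell(\xi) - m_R^i(\xi)\bigr]\, |\hat\theta(\xi)|^{2}\, d\xi \;=\; 0 \qquad \text{for all } \theta;
\]
since the integrand is even in $\xi$, it must vanish pointwise. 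Setting $\phi(\xi) := \xi \cdot m_R(\xi)$, this rearranges to $\partial_{\xi_i} \phi = 2 m_R^i$, so $m_R = \tfrac12 \nabla_\xi \phi$. Degree $2\delta$ homogeneity of $\phi$ and Euler's identity give $\xi \cdot \nabla_\xi \phi = 2\delta\phi$, hence $\phi = \tfrac12 \xi \cdot \nabla\phi = \delta\phi$, i.e.\ $(1 - \delta)\phi \equiv 0$. This is precisely where $\delta \neq 1$ enters: it forces $\phi \equiv 0$, hence $m_R \equiv 0$.

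For Step (III), with $m_R \equiv 0$ we have $T^\ell = iT_I^\ell$ and $\nabla_\ell T^\ell = 0$ (by Step (I)), so $B(|x|^{2}, \theta) = -2i \int x_\ell T_I^\ell\theta\cdot\theta\,dx$. A Fourier-side integration by parts in $\xi$, analogous to Step (II), collapses the vanishing of this integral to $\int (\nabla_\xi \cdot m_I)(\xi)\,|\hat\theta(\xi)|^{2}\, d\xi = 0$ for all $\theta$, and the even-integrand argument gives $\nabla_\xi \cdot m_I \equiv 0$ on $\widehat{\R}^{2} \setminus \{0\}$. Finally, in two dimensions an angular vector field admits the representation $m_I = f(\xi)\,\xi^\perp$, and divergence-freeness $\xi^\perp \cdot \nabla_\xi f = 0$ forces $f$ to be radial; homogeneity then fixes $f(|\xi|) = c\,|\xi|^{-2+2\delta}$, so $m = i m_I$ is a constant multiple of the mSQG multiplier, as claimed. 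The main obstacle is the leading-order stationary-phase computation in Step (I) without the oddness assumption — once that parity cancellation of the $m_R$ contribution at order $\lambda^{2}$ is verified, Steps (II) and (III) are clean Fourier-side integration-by-parts calculations, and the role of $\delta \neq 1$ is exactly what inverts Euler's identity.
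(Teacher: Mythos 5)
Your architecture genuinely differs from the paper's in Steps (II) and (III): where the paper runs wavepacket/geometric-optics constructions for every case (Propositions~\ref{prop:notIncompressibleCase}, \ref{prop:ifIncompressibleEven} and the compressible-even computation), you instead test the bound directly on the kernel of $\mrg{\nb}^2$, i.e.\ on $x_i$ and $|x|^2$, and convert the resulting vanishing into pointwise identities for $m$ by Fourier-side integration by parts. This is an attractive idea and Step (II) does go through: $\| \mrg{\nb}^2(\chi(x/n)x_i)\|_{L^\infty} = O(n^{-1})$, dominated convergence gives $B(\psi_n,\th) \to B(x_i,\th)$ for fixed $\th \in \SS_0$, and the identity $\xi_\ell \pr_{\xi_i} m_R^\ell = m_R^i$ together with Euler's relation recovers exactly the paper's conclusion $m^a - \bar\xi_\ell \nb^a m^\ell = 0$ and the role of $\de \neq 1$. (Two caveats there: your displayed Fourier identity silently drops the cross terms $\pr_{\xi_i}\hat\th\,\overline{\hat\th}$, which must be symmetrized away — they contribute a factor $1/2$ plus an odd integrand that vanishes — and the whole computation presupposes $m$ smooth away from the origin, so if the theorem is read as covering merely homogeneous distributional multipliers you still need the paper's twisted-mollification reduction.)

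The genuine gap is in Step (III). It is false that $\| \mrg{\nb}^2(\chi(x/n)\,|x|^2)\|_{L^\infty} \to 0$: writing $\psi_n = \chi_n |x|^2$ with $\chi_n = \chi(x/n)$, the Hessian contains the terms $|x|^2 \nb^2\chi_n$ and $\nb\chi_n \otimes 2x$, each of size $O(1)$ on the annulus $|x| \sim n$, and their trace-free parts do not vanish. So $(S)$ applied to $\psi_n$ only yields $|B(|x|^2,\th)| \lesssim \|\th\|_{\dot H^{-1+\de}}^2$, not $B(|x|^2,\th) = 0$, and the weaker bound is useless for forcing $\nb_\xi \cdot m_I \equiv 0$ (it is consistent with any bounded angular part). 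This is precisely the pitfall the paper flags in the introduction when it warns that applying Theorem~\ref{thm:mSQGbound} to a spatially cut-off $|x|^2$ does not suffice for angular momentum conservation, which is why the paper's proof of this step instead uses a wavepacket family and the failure of the estimate $\|\De\psi\|_{L^\infty} \lesssim \|\mrg{\nb}^2\psi\|_{L^\infty}$ (witnessed by $\chi(x)|x|^2\log|x|^2$). Your step is repairable for a fixed $\th \in \SS_0$: replace the linear-scale cutoff by a logarithmic one, $\psi_n(x) = |x|^2\, g(\log|x| / \log n)$ suitably smoothed near the origin, for which a direct computation in the variable $u = \log|x|$ gives $\| \mrg{\nb}^2 \psi_n \|_{L^\infty} = O(1/\log n)$ while still $0 \le \psi_n \le |x|^2$ and $\psi_n \to |x|^2$ pointwise, so dominated convergence against the Schwartz function $T^\ell\th\,\nb_\ell\th$ and $(S)$ together do give $B(|x|^2,\th)=0$. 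As written, however, the justification of $B(|x|^2,\th)=0$ is incorrect and this is the step that carries the entire conclusion that $m_I$ is divergence-free, hence radial times $\xi^\perp$.
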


\subsection{Reducing to the case of a smooth multiplier}

Let us reduce to the case where the multiplier is smooth.  We can argue by contradiction.  Suppose $m^\ell$ is a homogeneous multiplier that is not a constant multiple of the mSQG multiplier, but for which the corresponding nonlinearity nonetheless satisfies \eqref{eq:specialBound}.  We first reduce to the case where $m^\ell$ is smooth away from the origin by exploiting a natural action of the group of rotations.  

For $\a \in \R$, let $R_\a$ denote the one parameter group of linear operators that act on $\R^2$ by rotation by the angle $\a$, and let $[R_\a]_k^\ell$ denote the corresponding matrix entries.  The multiplier $m^\ell(\xi)$ can be regarded as a vector field on $\widehat{\R}^2$, and the group $R_\a$ acts on this space of vector fields by pushforward: $(R_\a m)^\ell(\xi) \equiv [R_\a]_k^\ell m^k(R_{-\a} \xi)$.  The mSQG multiplier, which we denote by $M^\ell(\xi)$, is up to a constant the unique vector field invariant under this action with the $-1 + 2\de$ homogeneity.  

Letting $f \circ R_\a(x) = f(R_\a x)$, since $R_\a$ is orthogonal, the formula $\widehat{f \circ R_\a}(\xi) = \hat{f}(R_{\a} \xi)$ yields that the rotation group correspondingly acts on the operator $T^\ell$ associated with $m^\ell$ by $(R_\a T)^\ell[\th](x) \equiv [R_\a]^\ell_k T^k[\th \circ R_{\a}](R_{-\a} x)$.  From this identity, we can check that the action of the rotation group respects the estimate \eqref{eq:specialBound}, for if \eqref{eq:specialBound} holds for $T^\ell$ then for the rotated form $B_\a$, we have
\ALI{
B_\a[\psi, \th] &\equiv \int \psi(x) (R_\a T)^\ell[\th] \nb_\ell \th(x) dx = \int \psi(x) [R_\a]_k^\ell T^k[\th \circ R_{\a}](R_{-\a} x) \nb_\ell \th(x) dx \\
&= \int \psi(R_{\a} x)  T^k[\th \circ R_{\a}](x) [R_\a]_k^\ell \nb_\ell \th(R_{\a} x) = \int \psi(R_{\a} x) T^k[\th \circ R_{\a}](x) \nb_k[\th \circ R_{\a}](x) \\
B_\a[\psi, \th] &= B_0[\psi \circ R_\a, \th \circ R_\a]
}
Thus the bilinear form $B_\a$ associated to $R_\a T$ maintains the bound \eqref{eq:specialBound}, since both $\| \mrg{\nb}^2 \psi \|_{L^\infty}$ and $\| \th \|_{\dot{H}^s}$ are invariant under rotations.

Now consider again the operator $T$ that we have assumed is not the mSQG operator but still satisfies \eqref{eq:specialBound}.  Define the twisted mollification of $T$ to be
\ali{
(T_\ep)^\ell = \int_\R (R_{\a} T)^\ell \eta_\ep(\a) d\a
}
where $\eta_\ep$ is a standard mollifier supported in $|\a| \leq \ep$.  Then $T_\ep$ inherits the estimate \eqref{eq:specialBound} since we are simply averaging over rotations that preserve the estimate.  Moreover, we claim the multiplier of $T_\ep$ is smooth and distinct from the mSQG multiplier for $\ep$ small enough. 

Indeed, since $m^\ell$ is a homogeneous distribution on $\widehat{\R^2}$, away from the origin $m$ has the form $m^\ell(\xi) = h^\ell(\om) |\xi|^{-1 + 2 \de}$, where $\om \in \R / \Z$ is the polar angle on $\widehat{\R^2}$ and $h^\ell$ is an $\R^2$-valued distribution on the $1D$ torus $\R / \Z$ (see e.g. \cite{hormanderBook}).  The operation of twisted mollification forming the operator $T_\ep$ simply replaces the distribution $h$ by the distribution $h_\ep^\ell(\om) = \int_\R  h^k(\om - \a) \eta_\ep(\a) [R_\a]_k^\ell d\a $, where the integral and the translation of $h$ are interpreted in the sense of distributions using duality.  

In the same way that a usual convolution produces a smooth function from a distribution, the distribution $h_\ep$ can be shown to be smooth.  Furthermore, since $R_\a$ is smooth and $R_0 = \mbox{Id}$, one can check that $h_\ep$ converges to $h$ in $\DD'(\R/\Z)$ as $\ep \to 0$ by calculating the dual operator that acts on test functions, and finally showing that the dual twisted mollification converges in $C^k(\R/\Z)$ for any test function and any $k \geq 0$.  We omit the details, which are similar to the usual process of convolution for distributions (see e.g. \cite{hormanderBook}).

As a result, $T_\ep$ must be distinct from the mSQG operator for $\ep$ sufficiently small, for we have assumed that the limiting multiplier $m^\ell(\xi) = h^\ell(\om) |\xi|^{-1 + 2 \de}$ is distinct from the mSQG multiplier.  Also, as pointed out $T_\ep$ inherits the estimate \eqref{eq:specialBound}.  Thus, if we can show $T_\ep$ is the mSQG Biot-Savart Law for every $\ep > 0$, the limiting operator $T$ is also.  

We now assume $T$ has a smooth multiplier satisfying \eqref{eq:specialBound} and $-1+2\de$ homogeneity, and proceed to show $T$ is a multiple of the Biot-Savart law for mSQG.


\subsection{The case of an odd multiplier}

By the analysis of the previous section, if the bound \eqref{eq:specialBound} holds for an operator $T^\ell$ with multiplier $m^\ell$, the same bound holds for the odd part of the operator $T_o^\ell = (T^\ell + (R_\pi T)^\ell)/2$ whose multiplier is $m_o^\ell(\xi) = (m^\ell(\xi) - m^\ell(-\xi))/2$, and also for the even part of the operator $T_e^\ell = (T^\ell - (R_\pi T)^\ell)/2$.  We will show that $T_o^\ell$ is a multiple of the mSQG Biot-Savart law and that $T_e^\ell$ is $0$ (except possibly in the case where the multiplier is degree $1$ homogeneous).

Let us now consider the case where the multiplier $m^\ell = m_o^\ell$ is odd.  We first establish that, assuming oddness of the multiplier, the velocity field $T^\ell$ must be incompressible in the sense that $m(\xi) \cdot \xi = 0$.

\begin{prop} \label{prop:notIncompressibleCase} If the velocity field $T^\ell$ is not incompressible, then there is a test function $\psi$ and a sequence $\th_\la$ with Fourier support in an annulus $\{ |\xi| \sim \la \}$ such that $\| \th_\la \|_{\dot{H}^{\de}} = O(1)$ and such that $B(\psi, \th_\la)$ has a nontrivial limit.  As a consequence, there cannot exist any estimate of the form
\ali{
|B(\psi, \th)| \lesssim \|\psi\|_X \| \th \|_{H^s}^2, \qquad \psi \in \CC_c^\infty, \th \in H^s \label{eq:illegalDivodd}
}
for any $s < \de$ and any norm $X$ on $\CC_c^\infty$.
\end{prop}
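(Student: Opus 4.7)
The plan is to repeat the geometric optics construction of Section~\ref{sec:Sharpness}, but with the wave packets renormalized to be $O(1)$ in $\dot H^\de$ instead of $\dot H^{-1+\de}$. Given such a sequence $\th_\la$ with Fourier support in $\{|\xi|\sim\la\}$, $\|\th_\la\|_{\dot H^\de}=O(1)$, and $\lim_{\la\to\infty}B(\psi,\th_\la)\neq 0$ for some $\psi\in C_c^\infty$, any bound of the form~\eqref{eq:illegalDivodd} with $s<\de$ would force $|B(\psi,\th_\la)|\lesssim\|\psi\|_X\,\la^{2(s-\de)}\to 0$, the desired contradiction.

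To construct $\th_\la$ I would exploit the failure of incompressibility. Since $\xi\cdot m(\xi)\not\equiv 0$ and $m$ is smooth away from the origin, a rotation of coordinates lets me arrange $m^1(e_1)\ne 0$. Moreover, oddness of $m$ combined with the reality condition $m(-\xi)=\overline{m(\xi)}$ forces $m$ to be purely imaginary, so $m^1(e_1)=ia$ with $a\in\R\setminus\{0\}$. With $F_\la(x)=\la^{1/2}\varphi(\la^{1/2}x)$ for a unit-ball bump $\varphi$ normalized by $\int\varphi^2=1$, I set
\ALI{
\Th_\la = \la^{-\de}P_\la\!\bigl(e^{i\la x_1}F_\la\bigr),\quad \bar\Th_\la = \la^{-\de}P_{-\la}\!\bigl(e^{-i\la x_1}F_\la\bigr),\quad \th_\la = \Th_\la + \bar\Th_\la,
}
where $P_{\pm\la}$ are the cutoffs adapted to $|\xi\mp\la e_1|\le\la/3$. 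Then $\th_\la$ is real-valued, Fourier-supported in $\{|\xi|\sim\la\}$, and $\|\th_\la\|_{\dot H^\de}\sim 1$ by the frequency localization and $\|\th_\la\|_{L^2}\sim\la^{-\de}$.

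Next I would decompose $B(\psi,\th_\la)=B(\psi,\Th_\la)+B(\psi,\bar\Th_\la)+T(\psi,\Th_\la,\bar\Th_\la)+T(\psi,\bar\Th_\la,\Th_\la)$ exactly as in Section~\ref{sec:Sharpness}. The first two ``high-high'' terms produce quantities whose Fourier transforms are supported at $|\xi|\sim 2\la$ and grow only polynomially in $\la$; they vanish in the limit when paired with the Schwartz function $\hat\psi$. For the two ``beating'' terms, homogeneity and oddness of $m$ give to leading order $T^\ell\Th_\la\approx \la^{\de-1}m^\ell(e_1)e^{i\la x_1}F_\la$ and $T^\ell\bar\Th_\la\approx -\la^{\de-1}m^\ell(e_1)e^{-i\la x_1}F_\la$, while the leading derivative of the oscillating phase yields $\nb_\ell\Th_\la\approx i\la^{1-\de}\de^1_\ell\,e^{i\la x_1}F_\la$ and its conjugate. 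The oscillatory phases cancel in each contracted product, producing
\ALI{
T^\ell\Th_\la\,\nb_\ell\bar\Th_\la + T^\ell\bar\Th_\la\,\nb_\ell\Th_\la \;\approx\; -2i\,m^1(e_1)\,F_\la^2 \;=\; -2i\,m^1(e_1)\,\la\,\varphi^2(\la^{1/2}x).
}
Since $\la\varphi^2(\la^{1/2}\cdot)$ is an approximate identity of mass $1$ concentrated at the origin, pairing with $\psi$ would give the limit $-2i\,m^1(e_1)\,\psi(0)=2a\,\psi(0)$, nonzero whenever $\psi(0)\ne 0$.

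The main technical obstacle is controlling the error from freezing $m^\ell$ at $\pm\la e_1$ and from derivatives falling on the slowly varying envelope $F_\la$ rather than on the phase. Both sources come with an extra factor of $\la^{-1/2}$: the linear Taylor remainder of the smooth multiplier $m^\ell$ around $\pm\la e_1$ picks up $|\eta|\lesssim\la^{1/2}$ from the Fourier support of $F_\la$, while $\nb F_\la$ is smaller than $\la F_\la$ by $\la^{-1/2}$ pointwise. These errors can be estimated in $L^1(dx)$ by the same Fubini and Cauchy--Schwarz argument used for the remainder $E_\la$ in Section~\ref{sec:Sharpness}, so that pairing with $\psi\in C_c^\infty$ produces an $O(\la^{-1/2})$ correction. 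Combining the three pieces, $B(\psi,\th_\la)\to 2a\,\psi(0)\ne 0$, completing the proof.
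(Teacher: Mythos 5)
Your proposal is correct and follows essentially the same route as the paper: the same $\la^{-\de}$-normalized wave packets with envelope at scale $\la^{-1/2}$, the same splitting into self-interaction terms (killed by Fourier support against $\hat\psi$) and cross terms whose leading part is an approximate identity times $m(\bar\xi)\cdot\bar\xi\ne0$, the same $O(\la^{-1/2})$ error mechanism, and the same scaling deduction ruling out any bound with $s<\de$. The only cosmetic differences are your preliminary rotation to put $\bar\xi=e_1$ and the observation that $m^1(e_1)$ is purely imaginary, neither of which changes the argument.
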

Indeed, if an estimate of the type \eqref{eq:illegalDivodd} were to exist, it would force $B(\psi, \th_\la) \to 0$ for the example we now construct in view of the estimate $\| \th_\la \|_{H^s} \lesssim \la^{s- \de} \| \th_\la \|_{\dot{H}^\de}$ for functions with Fourier support in a annulus of radius $\sim \la$.

\begin{proof} As $T$ is not incompressible-valued, there exists a frequency $\bar{\xi}$, $|\bar{\xi}| = 1$ such that $m(\bar{\xi}) \cdot \bar{\xi} \neq 0$.  Let $\eta_\la$ be a Schwartz function that is a bump function in frequency space adapted to $|\la \bar{\xi} - \xi| \leq \la/2$.  Define the operator $P_\la = \eta_\la \ast$ and define a bump function $\chi_\la(x) = \la^{-\de} \la^{d/4} \chi(\la^{1/2} x)$ (with $d = 2$)
 and define
\ALI{
\th_\la = (\Th_\la + \overline{\Th}_\la) = (\Th_\la + \Th_{-\la}),  \qquad \Th_\la = P_\la[e^{i \la \bar{\xi} \cdot x} \chi_\la(x)]. \qquad
}
We need the following geometric optics Lemma:
\begin{lem}\label{lem:leading order}  There is a leading order expansion of the form
\ali{
T^\ell \Th_\la &= e^{i \la \bar{\xi} \cdot x} \la^{-1 + 2\de} \chi_\la m^\ell(\bar{\xi}) + \mathrm{ErrV}^\ell \label{eq:leadingVelocity}\\
\| \mathrm{ErrV} \|_{L^2} &= O(\la^{-1/2} \la^{-1 + 2 \de} \la^{-\de}) = O(\la^{-3/2 + \de}) \label{eq:errorVelocBd1}\\
\nb_\ell \Th_\la &= (i \la) \bar{\xi}_\ell e^{i \la \bar{\xi} \cdot x} \chi_\la  + \mathrm{ErrS}_\ell \label{eq:leadScal1} \\
\| \mathrm{ErrS} \|_{L^2} &= O(\la^{1 - \de - 1/2})\label{eq:errScal1}
}
\end{lem}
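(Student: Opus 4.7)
The plan is to work in Fourier space, exploiting that $\hat{\chi}_\la(\zeta) = \la^{-\de - d/4}\hat{\chi}(\la^{-1/2}\zeta)$ is a Schwartz function concentrated at scale $|\zeta|\sim \la^{1/2}$, which is far smaller than the carrier frequency $\la$. This scale separation is precisely what makes a geometric optics expansion accurate here.

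For the velocity claim~\eqref{eq:leadingVelocity}--\eqref{eq:errorVelocBd1}, I would first translate in frequency by $\zeta = \xi - \la \bar{\xi}$ to write
\ALI{
T^\ell \Th_\la(x) = e^{i\la \bar{\xi}\cdot x}\int_{\hat{\R}^2} e^{i\zeta\cdot x}\, m^\ell(\la \bar{\xi}+\zeta)\,\eta_\la(\la \bar{\xi}+\zeta)\,\hat{\chi}_\la(\zeta)\,\fr{d\zeta}{(2\pi)^d}.
}
Replacing the product $m^\ell(\la \bar{\xi}+\zeta)\eta_\la(\la \bar{\xi}+\zeta)$ by $m^\ell(\la \bar{\xi}) = \la^{-1+2\de} m^\ell(\bar{\xi})$ via the $-1+2\de$ homogeneity produces the claimed leading term upon recognizing the inverse Fourier transform of $\hat{\chi}_\la$. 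By Plancherel, $\|\mathrm{ErrV}\|_{L^2}$ equals the $L^2$ norm of the symbol defect against $\hat{\chi}_\la(\zeta)$. I would then split the $\zeta$-integral into the near region $|\zeta|\le \la/3$, where $\eta_\la\equiv 1$, and the far region. In the near region, the mean value inequality along the segment from $\la \bar{\xi}$ to $\la \bar{\xi}+\zeta$, together with the homogeneity bound $|\nb m^\ell(\xi)|\lesssim |\xi|^{-2+2\de}$ on $|\xi|\sim\la$, controls the symbol defect by $\lesssim \la^{-2+2\de}|\zeta|$. Combined with the scaling identity $\|\zeta\hat{\chi}_\la\|_{L^2}^2\sim \la^{1-2\de}$, this produces the stated $O(\la^{-3/2+\de})$ estimate.

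For the scalar-gradient claim~\eqref{eq:leadScal1}--\eqref{eq:errScal1}, the plan is more elementary. Commuting $\nb_\ell$ through $P_\la$ and applying the product rule gives
\ALI{
\nb_\ell \Th_\la = P_\la\bigl[\,i\la \bar{\xi}_\ell e^{i\la \bar{\xi}\cdot x}\chi_\la + e^{i\la \bar{\xi}\cdot x}\nb_\ell \chi_\la\,\bigr].
}
Both summands have Fourier transforms concentrated within $O(\la^{1/2})$ of $\la\bar{\xi}$, hence lie well inside the region $\{\eta_\la\equiv 1\}$ up to Schwartz tails, so $P_\la$ acts as the identity modulo such tails. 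The first summand is exactly the stated leading term, while the second contributes an $O(\la^{1/2-\de})$ $L^2$ error via the scaling $\|\nb\chi_\la\|_{L^2}\sim \la^{1/2-\de}$.

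The only technical point is handling the far-region contributions in both computations. Since $\hat{\chi}_\la$ decays rapidly at scales $|\zeta|\gg \la^{1/2}$, the integrals over $|\zeta|>\la/3$ (where $\eta_\la$ or the symbol defect are merely bounded rather than small) contribute $O(\la^{-N})$ for every $N$ and are absorbed into the stated error. This Schwartz-decay bookkeeping is routine, so I do not anticipate a serious obstacle, and the near-region analysis drives the final estimate in both cases.
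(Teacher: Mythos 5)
Your proof is correct, and it reaches the stated bounds by a route that is the exact Fourier dual of the one in the paper. The paper stays entirely in physical space: it writes $T^\ell P_\la$ as convolution with a kernel $K_\la^\ell$, pulls out the carrier wave, freezes the envelope by replacing $\chi_\la(x-h)$ with $\chi_\la(x)$, and estimates the resulting error via Minkowski's inequality as $\sup_{\si,h}\|\nb\chi_\la(\cdot-\si h)\|_{L^2}\cdot\| |h| K_\la\|_{L^1}\lesssim \la^{1/2-\de}\cdot\la^{-2+2\de}$. You instead stay in frequency space: you freeze the symbol by replacing $m^\ell(\la\bar\xi+\zeta)\hat\eta_\la(\la\bar\xi+\zeta)$ with $m^\ell(\la\bar\xi)$ and estimate the defect via Plancherel as $\sup_{|\zeta|\lesssim\la}|\nb m^\ell|\cdot\||\zeta|\hat\chi_\la\|_{L^2}\lesssim\la^{-2+2\de}\cdot\la^{1/2-\de}$. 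The two computations are term-for-term dual: the kernel moment $\||h|K_\la\|_{L^1}$ controls $\|\nb(m^\ell\hat\eta_\la)\|_{L^\infty}$, and $\|\nb\chi_\la\|_{L^2}=\||\zeta|\hat\chi_\la\|_{L^2}$, so the same factor of $\la^{-1/2}$ relative to the main term emerges from the same scale separation (envelope at frequency scale $\la^{1/2}$ versus symbol varying at scale $\la$). Your version has the mild advantage of making the role of the homogeneity bound $|\nb m^\ell(\xi)|\lesssim|\xi|^{-2+2\de}$ explicit, at the cost of having to track the far region $|\zeta|>\la/3$ separately (which you correctly dispose of by Schwartz decay, and which also absorbs the $(P_\la-\mathrm{Id})$ tails in the gradient computation); the paper's version packages both regions into the single moment bound on $K_\la$. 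One cosmetic remark: in your first display the factor $\eta_\la(\la\bar\xi+\zeta)$ should be the symbol $\hat\eta_\la(\la\bar\xi+\zeta)$, since the paper reserves $\eta_\la$ for the physical-space kernel of $P_\la$.
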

We prove the expansion for the velocity field.  The proof for the scalar gradient is similar.  Let $K_\la^\ell$ denote the convolution kernel representing $T^\ell P_\la$.  Observe that
\ali{
T^\ell \Th_\la &= \int e^{i\la\bar{\xi}\cdot(x - h)} \chi_\la(x-h) K_\la^\ell(h) dh \\
&= e^{i\la\bar{\xi}\cdot x}\int e^{-i\la\bar{\xi}\cdot h} \chi_\la(x-h) K_\la^\ell(h) dh \\
&= e^{i\la\bar{\xi}\cdot x}\int e^{-i\la\bar{\xi}\cdot h} \chi_\la(x) K_\la^\ell(h) dh + \mathrm{ErrV} 
}
Notice that the leading order term is equal to the one in \eqref{eq:leadingVelocity}.  We estimate the error term by writing
\ALI{
\mathrm{ErrV} &= e^{i\la\bar{\xi}\cdot x}\int \int_0^1  e^{-i\la\bar{\xi}\cdot h} \nb_a\chi_\la(x - \si h) h^a K_\la^\ell(h) d\si dh \\
\| \mathrm{ErrV} \|_{L^2} &\lesssim \sup_{\si,h} \| \nb \chi_\la(\cdot - \si h) \|_{L^2} \cdot \| h |K_\la| \|_{L^1} \\
&\lesssim \la^{1/2 - \de} \cdot \la^{-1 - 1 + 2 \de } = O(\la^{-3/2 + \de})
}
Note that the reason we gain by a factor of $\la^{-1/2}$ compared to the main term is that differentiating $\chi$ costs only $\la^{1/2}$, but gains a factor of $\la^{-1}$ coming from the factor of $h$.  The same observations lead to the other equations \eqref{eq:leadScal1} and \eqref{eq:errScal1}.

We now let $\psi$ be any test function with $\psi(0) = 1$.  We claim that
\ALI{
B(\psi, \th_\la) &= \int \psi T^\ell \th_\la \nb_\ell \th_\la dx = \int \psi (T^\ell \Th_\la \nb_\ell \Th_{-\la} + T^\ell \Th_\la \nb_\ell \Th_{-\la} )  dx + o(1) 
}
To see that the other terms are $o(1)$, consider $\int \psi T^\ell \Th_\la \nb_\ell \Th_\la$.  We can write this as an integral in frequency space by Plancharel, and note that the Fourier transform of $T^\ell \Th_\la \nb_\ell \Th_\la$, which is localized to frequencies of the order $\la$, has $L^2$ norm bounded by a polynomial in $\la$.  By applying dominated convergence in frequency space, the integral tends to zero as claimed.

We now consider the main term, which we expand as
\ALI{
B(\psi, \th_\la) &= (- i \la ) \la^{-1 + 2 \de} \int \psi \chi_\la^2 ( m^\ell(\bar{\xi}) \bar{\xi}_\ell - m^\ell(-\bar{\xi}) \bar{\xi}_\ell) dx  \\
&+ \la^{-1 + 2 \de} \int \psi \chi_\la m^\ell(\bar{\xi}) \mathrm{ErrS}_\ell - (i\la) \int \psi \chi_\la \bar{\xi}_\ell \mathrm{ErrV}^\ell \\
&+ \int \psi \mathrm{ErrS}_\ell \mathrm{ErrV}^\ell dx
}
By bounding $\| \psi \|$ in $L^\infty$ and taking the other factors in $L^2$ using $\|\chi_\la \|_{L^2} \lesssim \la^{-\de}$ and \eqref{eq:errorVelocBd1},\eqref{eq:errScal1}, we see that the terms on the second line are $O(\la^{-1/2})$ while the error term in the last line is $O(\la^{-1})$.

The term on the first line is therefore the main term.  The cutoff $\chi_\la$ is normalized so that $\la^{2\de} \chi_\la^2~\rightharpoonup~A \de_0$ for some $A \neq 0$.  Since we have assumed a choice of $\bar{\xi}$ such that $ m^\ell(\bar{\xi}) \bar{\xi}_\ell - m^\ell(-\bar{\xi}) \bar{\xi}_\ell = 2 m^\ell(\bar{\xi})\bar{\xi}_\ell$ is not zero (recall that $m$ is odd) and a choice of $\psi$ that does not vanish at the origin, we see as desired that $B(\psi, \th_\la)$ has a nontrivial limit as $\la \to \infty$.
\end{proof}

We have now established that the velocity field is incompressible; that is, the multiplier satisfies $m(\xi) \cdot \xi = 0$.  Our next task is to show that the mSQG nonlinearity is the only one consistent with the special bound \eqref{eq:specialBound}.  This fact follows from the following proposition.

\begin{prop}  Let $T^\ell$ be incompressible with odd multiplier $m$ that is homogeneous of degree $-1 + 2\de$.  If $m$ is not a scalar multiple of the mSQG multiplier, then there cannot exist a bound of the form \eqref{eq:specialBound} for the corresponding nonlinearity.
\end{prop}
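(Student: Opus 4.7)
The plan is to extend the geometric optics analysis of Section~\ref{sec:Sharpness} to an arbitrary odd, incompressible, smooth multiplier and to read off the constraints that \eqref{eq:specialBound} imposes on $m$.  By the reduction in the previous subsection we may assume $m$ is smooth on $\R^2 \setminus \{0\}$, and by incompressibility and two-dimensionality we write $m^\ell(\xi) = \ep^{\ell a} \xi_a f(\xi)$ for a scalar $f$ that is degree $-2+2\de$ homogeneous, smooth away from $0$, and even (by the oddness of $m$).  The goal is to force $f$ to be independent of the polar angle, which pins down $f(\xi) = c|\xi|^{-2+2\de}$ and identifies $m$ as a scalar multiple of the mSQG multiplier.

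For each unit vector $\om \in S^1$, generalize \eqref{eq:Thladef} by replacing $e_1$ with $\om$:
\[
\Th_{\la,\om} \coloneqq \la^{1-\de}\, P_{\la \om}\bigl(e^{i \la\, \om \cdot x}\,\la^{1/2} \vp(\la^{1/2} x)\bigr), \qquad \th_{\la,\om} \coloneqq \Th_{\la,\om} + \overline{\Th}_{\la,\om},
\]
so that $\|\th_{\la,\om}\|_{\dot H^{-1+\de}} \sim 1$.  Applying the bilinear divergence form of Section~\ref{sec:deriveBilinear} to the mixed interaction and Taylor expanding as in \eqref{eq:bilinearFormConvolve1}--\eqref{eq:mainTermRevealed}, using the identity $u_\si = \si(\la\om) - (1-\si)(-\la\om) = \la\om$ at the leading frequency, one obtains
\[
\lim_{\la \to \infty} B(\psi, \th_{\la,\om}) \;=\; (-i)\,\tfrac{1}{2}\bigl(\nb^j m^\ell(\om) + \nb^\ell m^j(\om)\bigr)\, \nb_j \nb_\ell \psi(0),
\]
after normalizing $\int \vp^2 = 1$.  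The self-interaction terms $B(\psi, \Th_{\la,\om})$ and $B(\psi, \overline{\Th}_{\la,\om})$ vanish in the limit by Schwartz decay of $\hat\psi$ combined with frequency localization at scale $\sim \la$, while the error from the $\si$-variation in $\vp(\la^{1/2}(x-\si h_i))$ is $O(\la^{-1/2})$ by the Cauchy--Schwarz argument of \eqref{eq:errorTermEla}.  Denote the resulting linear functional on symmetric matrices by $T_\om$.

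The assumed bound \eqref{eq:specialBound} combined with $\|\th_{\la,\om}\|_{\dot H^{-1+\de}}^2 \lesssim 1$ forces $|T_\om(\nb^2 \psi(0))| \lesssim \|\mrg \nb^2 \psi\|_{L^\infty}$ for every $\psi \in C_c^\infty$.  The crucial step is the claim that, for every $\eta > 0$, there exists $\psi \in C_c^\infty(\R^2)$ with $\nb^2 \psi(0) = I$ and $\|\mrg\nb^2 \psi\|_{L^\infty} \leq \eta$; granting this forces $T_\om(I) = 0$.  Taking $\psi = f(|x|)$ radial, one computes $\mrg\nb^2_{jk}\psi = (f''(r) - f'(r)/r)\bigl(\hat x_j\hat x_k - \tfrac{1}{2}\de_{jk}\bigr)$ and $\nb^2\psi(0) = f''(0)\, I$, so the claim reduces to finding smooth compactly supported $f$ with $f''(0) = 1$ and $\sup_r |f''(r) - f'(r)/r| \leq \eta$.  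Writing $g(r) \coloneqq f''(r) - f'(r)/r$, the identity $(f'/r)' = g/r$ shows that compact support of $f$ amounts to $g(0) = 0$ together with $\int_0^\infty g(s)/s\, ds = -1$.  Choosing $g$ to be essentially a plateau at height $-\eta$ on an interval $[s_0, R]$ with $R \sim s_0\, e^{1/\eta}$, smoothly joined to $0$ near the endpoints, realizes both $\|g\|_{L^\infty} \leq \eta$ and the integral constraint, yielding the required radial test function (with support of exponential size in $\eta^{-1}$).

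A direct computation shows that the trace of the symmetric part of $\nb^j m^\ell(\om)$ equals $\nb\cdot m(\om)$, so the vanishing of $T_\om(I)$ for every $\om \in S^1$ is precisely $\nb\cdot m \equiv 0$ on $S^1$, and by homogeneity on all of $\R^2 \setminus \{0\}$.  Substituting $m^\ell = \ep^{\ell a} \xi_a f$ gives $\nb\cdot m = \ep^{\ell a}\xi_a\pr_\ell f = -\xi^\perp \cdot \nb f = -\pr_\theta f$, so $f$ is independent of the polar angle; combined with its degree $-2+2\de$ homogeneity, $f(\xi) = c|\xi|^{-2+2\de}$, and $m$ is a scalar multiple of the mSQG multiplier.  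The main technical obstacle is the radial test function construction in the third paragraph, which exploits the logarithmic divergence of $\int ds/s$ to trade $L^\infty$ size of $g$ against size of the support; everything else is a direct continuation of the calculations already carried out in Section~\ref{sec:Sharpness}.
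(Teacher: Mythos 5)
Your proposal is correct and follows essentially the same route as the paper: the same wave-packet family oscillating in a chosen direction $\om$, the same divergence-form/Taylor-expansion computation showing $\lim_\la B(\psi,\th_{\la,\om})$ is $\nb^j m^\ell(\om)$ (symmetrized) contracted with $\nb_j\nb_\ell\psi(0)$, and the same conclusion that \eqref{eq:specialBound} forces the trace $\nb\cdot m$ to vanish on $S^1$, which by incompressibility and homogeneity pins $m$ down as a multiple of the mSQG multiplier. The only cosmetic difference is at the last step, where the paper appeals to the failure of $\|\De\psi\|_{L^\infty}\lesssim\|\mrg{\nb}^2\psi\|_{L^\infty}$ via test functions approximating $\chi(x)|x|^2\log|x|^2$, while you build the same logarithmic counterexample explicitly as a compactly supported radial function with $\nb^2\psi(0)=I$ and $\|\mrg{\nb}^2\psi\|_{L^\infty}\leq\eta$.
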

\begin{proof}
Let $M^\ell$ be the mSQG multiplier that is homogeneous of degree $-1 + 2 \de$, which is a vector field tangent to the unit circle of constant length when restricted to the unit circle.  Since we work in two dimensions and $m(\xi)$ satisfies $m(\xi) \cdot \xi = 0$ (i.e. $m$ is also tangent to the unit circle), we can deduce that $m$ has the form $m^\ell(\xi) = H(\xi) M^\ell(\xi)$ for some function $H$ that is homogeneous of degree zero and smooth away from the origin.

Suppose now that $m$ is not a constant multiple of the mSQG multiplier.  Then $H$ is not constant on the unit circle, and thus there exists a $\bar{\xi} \in S^1$ such that $\nb_\ell H(\bar{\xi}) M^\ell(\bar{\xi}) \neq 0$. 

We now construct a family of scalars $\Th_\la$ with frequency support in $\xi \sim \la$ that oscillate in the $\bar{\xi}$ direction.  Set
\ALI{
\Th_\la &= \la^{1-\de} P_{\approx \la}[e^{i\la \bar{\xi} \cdot x} \varphi_\la(x)] 
}
where the multiplier for $P_{\approx \la} $ is a bump function adapted to $|\xi - \la \bar{\xi}| \leq \la/4$, and where $\varphi_\la(x) = \la^{d/2} \varphi(\la x)$ is a rescaled bump function normalized so that $\int \varphi_\la^2 dx = \int \varphi(x) dx = 1$.  Define $\th_\la = \Th_\la + \bar{\Th}_\la$, so that $\th_\la$ is real-valued.  Note that the family $\th_\la$ is uniformly bounded in $\dot{H}^{-1 + \de}$.  Also note the direct analogy with the construction of  $\Th_\la$ in line \eqref{eq:Thladef}.

Now let $\psi$ be an arbitrary test function.  By incompressibility we have
\ALI{
B(\psi,\th_\la) = \int \psi T^\ell \th_\la \nb_\ell \th_\la dx = - \int \nb_\ell \psi \th_\la T^\ell \th_\la dx 
}
We also have $\int \nb_\ell \psi \,T^\ell \Th_\la \, \Th_\la dx = o(1)$ as $\la \to \infty$ and similarly for $\Th_\la$ replaced by $\bar{\Th}_\la$ since $\nb_\ell \psi$ is Schwartz and $T^\ell \Th_\la \Th_\la$ has frequency support in $|\xi| \sim \la$ and has $L^2$ norm bounded by a polynomial in $\la$. 

This vanishing leaves us with 
\ALI{
- B(\psi, \th_\la) &= \int \nb_\ell \psi (\underbrace{T^\ell \Th_\la \bar{\Th}_\la + T^\ell \bar{\Th}_\la \Th_\la}_{\equiv N^\ell_\la} ) dx + o(1).
}
In other words the main term is the high-high to low frequency cascade once we establish it is nonzero.

Now recall that the machinery of Section~\ref{sec:deriveBilinear} gives us a divergence form representation of the form
\ALI{
N^\ell_\la &= \nb_j B_\la^{j\ell}[\Th_\la, \bar{\Th}_\la] \\ 
B_\la^{j\ell}[\Th_\la, \bar{\Th}_\la] &= \int_{\R^2 \times \R^2} K_\la^{j\ell}(h_1, h_2) \Th_\la(x - h_1) \bar{\Th}_\la(x - h_2) dh_1 dh_2 \\
\widehat{K}_\la^{j\ell}(\zeta, \eta) &= -i \int_0^1 \nb^j m_\la^\ell(\si \zeta - (1-\si) \eta) d\si \hat{\chi}_\la(\zeta) \hat{\chi}_\la(\eta) 
}
where $m_\la$ is a cutoff version of $m$ adapted to frequency $\la$ and where $\hat{\chi}_\la$ is a cutoff adapted to frequencies of size $\la$.  We can also absorb the factors of $P_{\la}$ into the kernel $K$ to get 
\ali{
&N^\ell_\la = \nb_j \tilde{B}_\la^{j\ell}[ e^{i \la \bar{\xi} \cdot x} \varphi_\la, e^{-i\la \bar{\xi} \cdot x} \varphi_\la] \\
\tilde{B}_\la^{j\ell}&[\Th_\la, \bar{\Th}_\la] = \la^{2 - 2\de} \int_{\R^2 \times \R^2} \tilde{K}_\la^{j\ell}(h_1, h_2) e^{i \la \bar{\xi} \cdot (x - h_1)} \varphi_\la(x - h_1) e^{- i \la \bar{\xi} \cdot (x - h_2)} \varphi_\la(x - h_2) dh_1 dh_2  \label{eq:bilinFormToExpand} \\
&\eqref{eq:bilinFormToExpand} = \la^{2-2\de} \int e^{i \la \bar{\xi} \cdot h_1} e^{-i \la \bar{\xi} \cdot h_2} \la \varphi^2(\la^{1/2} x) \tilde{K}_\la^{j\ell}(h_1,h_2) dh_1 dh_2   \label{eq:leadingTerm2} \\
&+ \underbrace{ \la^{2-2\de} \int e^{i \la \bar{\xi} \cdot( h_1 - h_2)} \int_0^1 \fr{d}{d\si} \la \varphi(\la^{1/2}(x - \si h_1)) \varphi(\la^{1/2}(x - \si h_2)) \tilde{K}_\la^{j\ell}(h_1,h_2) dh_1 dh_2 d\si}_{\equiv E_\la} \label{eq:errorTermEla2}
}
We then have
\ali{
B(\psi, \th_\la) &= \int \nb_j \nb_\ell \psi \eqref{eq:leadingTerm2}^{j\ell} dx + \int \nb_j\nb_\ell\psi E_\la^{j\ell} dx + o(1)
}
By the computation following \eqref{eq:mainTermRevealed}, we have that $\int \nb_j\nb_\ell\psi E_\la^{j\ell} dx = o(1)$.  The leading term is then derived using the degree $-1 + 2\de$ homogeneity of the multiplier as
\ALI{
 \eqref{eq:leadingTerm2}^{j\ell} &= \la^{2-2\de} \la \varphi^2(\la^{1/2} x) \widehat{ \tilde{K}_\la^{j\ell}}(\la \bar{\xi}, - \la \bar{\xi})  \\
&= \la \varphi^2(\la^{1/2} x) \nb^j m^\ell(\bar{\xi}) \\
&= \la \varphi^2(\la^{1/2} x) (\nb^j H(\bar{\xi}) M^\ell(\bar{\xi}) + H \nb^j M^\ell(\bar{\xi}))
}
where we recall that $m^\ell = H(\xi) M^\ell$, where $M^\ell$ is the mSQG Biot-Savart law.  We can thus write
\ALI{
\eqref{eq:leadingTerm2}^{j\ell} &= \la \varphi^2(\la^{1/2} x)\left( \fr{1}{d}\nb_i H(\bar{\xi}) M^i(\bar{\xi}) \de^{j\ell} + \mrg{A}^{j\ell} \right)
}
where $\mrg{A}^{j\ell}$ is a (fixed, constant) trace-free matrix, since we know $\nb^j M^\ell$ is trace-free.

Assuming the special bound \eqref{eq:specialBound}, we conclude that 
\ALI{
\left| \fr{\nb_i H(\bar{\xi}) M^i(\bar{\xi})}{d} \De \psi(0) + \mrg{A}^{j\ell} \nb_j \nb_\ell \psi(0) \right| = \lim_{\la \to \infty} |B(\psi, \th_\la)| \lesssim \| \mrg{\nb}^2 \psi \|_{L^\infty} 
}
since $\limsup \| \th_\la \|_{\dot{H}^{-1+\de}}^2$ is bounded.  This bound, however, implies the estimate $\| \De \psi \|_{L^\infty} \lesssim \| \mrg{\nb}^2 \psi \|_{L^\infty}$ for all test functions, which is a false estimate (as can be seen by considering test functions that approximate $\chi(x) |x|^2 \log |x|^2$, $\chi$ a smooth bump function).

Our conclusion is then that the function $H$ in the decomposition $m^\ell(\xi) = H(\xi) M^\ell(\xi)$ must be a constant, so that there is no point on the unit circle where $M(\xi) \cdot \nb H(\xi) \neq 0$.  Thus, $m$ is a constant multiple of the mSQG Biot-Savart law under the assumption that the multiplier is odd.
\end{proof}

\subsection{The case of an incompressible velocity field with even multiplier}

We want to show that, outside a certain exceptional case, the active scalar nonlinearities that satisfy the exceptional bound \eqref{eq:specialBound} must have vanishing even part.  We first eliminate the case of an incompressible even multiplier (that is, $\nb_\ell T^\ell[\th] = 0$).  In this case the nonlinearity has the form
\ali{
B(\psi, \th) &= - \int \nb_\ell \psi \, \th \, T^\ell \th dx  \label{eq:incompForm}
}
after an integration by parts. 
\begin{prop}\label{prop:ifIncompressibleEven}  If the velocity field $T^\ell$ is incompressible and the multiplier $m$ has nonzero even part, then there is a test function $\psi$ and a sequence $\th_\la$ with Fourier support in an annulus $\{ |\xi| \sim \la \}$ such that $\| \th_\la \|_{\dot{H}^{-1/2+\de}} = O(1)$ and such that $B(\psi, \th_\la)$ has a nontrivial limit.  As a consequence, there cannot exist any estimate of the form
\ali{
|B(\psi, \th)| \lesssim \|\psi\|_X \| \th \|_{H^s}^2, \qquad \psi \in \CC_c^\infty, \th \in H^s \label{eq:illegalDivodd2}
}
for any $s <-1/2 + \de$ and any norm $X$ on $\CC_c^\infty$.
\end{prop}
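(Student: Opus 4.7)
The plan is to construct a high-frequency wave packet whose low-frequency self-interaction survives thanks to a nonvanishing even component of $m$, and to pair it against a test function whose gradient at the origin detects that interaction. Since $T^\ell$ is incompressible, \eqref{eq:incompForm} reduces the task to producing $\psi\in C_c^\infty$ and $\th_\la$ with Fourier support in $\{|\xi|\sim\la\}$ satisfying $\|\th_\la\|_{\dot H^{-1/2+\de}}=O(1)$ for which $-\int\nb_\ell\psi\,\th_\la\,T^\ell\th_\la\,dx$ has a nonzero limit. Let $m_e^\ell(\xi)=(m^\ell(\xi)+m^\ell(-\xi))/2$ denote the even part of $m$; by hypothesis $m_e\not\equiv 0$, so I fix $\bar\xi\in S^1$ with $m_e(\bar\xi)\neq 0$ and set
\[
\Th_\la \;=\; \la^{1/2-\de}\,P_{\approx\la}\bigl[e^{i\la\bar\xi\cdot x}\,\la^{1/2}\varphi(\la^{1/2}x)\bigr],\qquad \th_\la \;=\; \Th_\la+\bar\Th_\la,
\]
with $\varphi\in C_c^\infty(\R^2)$ a real bump of unit $L^2$ norm and $P_{\approx\la}$ a standard cutoff adapted to $|\xi-\la\bar\xi|\leq\la/3$. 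A direct computation gives $\|\Th_\la\|_{L^2}\sim\la^{1/2-\de}$, whence $\|\th_\la\|_{\dot H^{-1/2+\de}}=O(1)$; this normalization is precisely the critical one for the form $\int\nb_\ell\psi\,\th\,T^\ell\th\,dx$.

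Next I would expand $\th_\la\,T^\ell\th_\la$ into its four dyadic interactions. The self-interactions $\Th_\la T^\ell\Th_\la$ and $\bar\Th_\la T^\ell\bar\Th_\la$ are Fourier-localized near $\pm 2\la\bar\xi$; their $L^2$ norms grow only polynomially in $\la$, so by Plancharel and the rapid decay of $\widehat{\nb\psi}$ their contribution is $o(1)$. The surviving contribution is the cross term, for which a geometric optics expansion analogous to Lemma~\ref{lem:leading order} (equivalently, the divergence form computation of Section~\ref{sec:deriveBilinear} adapted to this setting) yields
\[
\Th_\la T^\ell\bar\Th_\la + \bar\Th_\la T^\ell\Th_\la \;=\; \la^{-1+2\de}\bigl[m^\ell(\bar\xi)+m^\ell(-\bar\xi)\bigr]|\Th_\la|^2 + \mathrm{Err}_\la \;=\; 2\,m_e^\ell(\bar\xi)\,\la\,|\varphi(\la^{1/2}x)|^2 + \mathrm{Err}_\la,
\]
after invoking homogeneity of $m$ and the leading-order identity $|\Th_\la|^2\approx\la^{2-2\de}|\varphi(\la^{1/2}x)|^2$. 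The crucial structural point is the evenness: for the purely odd multipliers of Section~\ref{sec:Sharpness} the combination $m^\ell(\bar\xi)+m^\ell(-\bar\xi)$ cancels identically, whereas here it equals $2m_e^\ell(\bar\xi)\neq 0$ by construction. Since $\la|\varphi(\la^{1/2}\cdot)|^2\rightharpoonup\delta_0$ in $\R^2$, this yields $B(\psi,\th_\la)\to -2m_e^\ell(\bar\xi)\nb_\ell\psi(0)$, and selecting $\psi$ with $m_e^\ell(\bar\xi)\nb_\ell\psi(0)\neq 0$ produces the required nontrivial limit. An estimate of the form \eqref{eq:illegalDivodd2} at any $s<-1/2+\de$ would then be contradicted, because $\|\th_\la\|_{H^s}\sim\la^{s+1/2-\de}\to 0$ for such $s$.

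The main obstacle is the verification that $\|\mathrm{Err}_\la\|_{L^1}=o(1)$, but this mirrors the estimate carried out for $E_\la$ in \eqref{eq:errorTermEla}: a first-order Taylor remainder in the slowly varying envelope produces a factor $|h|$ that, paired against the kernel scaling $\tilde K_\la(h_1,h_2)=\la^{-2+2\de}\la^{2d}\tilde K_1(\la h_1,\la h_2)$, gains a power of $\la^{-1/2}$ relative to the leading term. Once this error estimate is in place, all that remains is bookkeeping of constants through the geometric optics expansion, which is routine.
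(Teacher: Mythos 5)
Your proposal is correct and follows essentially the same route as the paper: the same wave packet $\Th_\la = \la^{1/2-\de}P_{\approx\la}[e^{i\la\bar\xi\cdot x}\la^{d/4}\varphi(\la^{1/2}x)]$ normalized in $\dot{H}^{-1/2+\de}$, integration by parts via incompressibility, discarding the high-frequency self-interactions by Plancherel, and extracting the limit $-2m_e^\ell(\bar\xi)\nb_\ell\psi(0)$ from the cross term via the approximate delta function $\la\varphi^2(\la^{1/2}x)$. The only (harmless) difference is that you frame the error estimate through the bilinear kernel scaling of Section~\ref{sec:Sharpness} rather than the per-factor expansion of Lemma~\ref{lem:leading order2} -- note that the anti-divergence of Section~\ref{sec:deriveBilinear} itself is not literally available here since it requires oddness of the multiplier, but your leading-order computation does not actually use it.
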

\begin{proof}
Indeed, suppose the even part of $m$ takes on a nonzero value at some $\bar{\xi}$ on the unit circle.  We construct 
\ALI{
\Th_\la &= \la^{1/2 - \de} P_{\la}[ e^{i \la \bar{\xi} \cdot x} \la^{d/4} \varphi(\la^{1/2} x) ], \qquad \th_\la = \Th_\la + \bar{\Th}_\la
}
with the properties that $\varphi$ is a bump function with $\int |\varphi|^2 dx = 1$, $P_\la$ is a Fourier multiplier adapted to $|\xi - \la \bar{\xi}| < \la/5$.  

Similar to Lemma~\ref{lem:leading order}, we have a leading order expansion where the $L^2$ norm of the errors improves on the main term by a factor of $\la^{-1/2}$
\begin{lem}\label{lem:leading order2}  There is a leading order expansion of the form
\ali{
T^\ell \Th_\la &= \la^{-1/2 + \de} e^{i \la \bar{\xi} \cdot x}  \la^{d/4} \varphi(\la^{1/2}x) m^\ell(\bar{\xi}) + \mathrm{ErrV}^\ell \label{eq:leadingVelocity2}\\
\| \mathrm{ErrV} \|_{L^2} &= O(\la^{-1/2} \la^{-1/2 + \de} ) = O(\la^{-1 + \de}) \label{eq:errorVelocBd12}\\
\Th_\la &= \la^{1/2-\de} e^{i \la \bar{\xi} \cdot x} \la^{d/4} \varphi(\la^{1/2} x)  + \mathrm{ErrS} \label{eq:leadScal12} \\
\| \mathrm{ErrS} \|_{L^2} &= O(\la^{-1/2} \la^{1/2 - \de}) = O(\la^{- \de}) \label{eq:errScal12}
}
\end{lem}
We omit the proof, which is essentially the same as that of Lemma~\ref{lem:leading order}.  

Arguing as in the proof of Proposition~\ref{prop:notIncompressibleCase} and using \eqref{eq:incompForm} we obtain
\ALI{
- B(\psi, \th_\la) &= \int \nb_\ell \psi ( \bar{\Th}_\la T^\ell \Th_\la  + \Th_\la T^\ell \bar{\Th}_\la ) dx + o(1) \\
&=  \int \nb_\ell \psi \la^{d/2} \varphi^2(\la^{1/2} x) ( m^\ell(\bar{\xi}) + m^\ell(-\bar{\xi}) ) dx + o(1) \\
&= \nb_\ell \psi(0) (m^\ell(\bar{\xi}) + m^\ell(-\bar{\xi})) + o(1).
}
Since $\bar{\xi}$ was chosen so that $m^\ell(\bar{\xi}) + m^\ell(-\bar{\xi}) \neq 0$, this limit is nontrivial if $\psi$ is chosen appropriately, which concludes the proof of Proposition~\ref{prop:ifIncompressibleEven}.
\end{proof}

\subsection{The case of a more general even multiplier}
 
It is more difficult to rule out an estimate of the form \eqref{eq:specialBound} when we consider compressible velocity fields with an even multiplier.  In this case we must do a more careful analysis of the nonlinearity
\ALI{
Q[\th] &= T^\ell \th \nb_\ell \th.
}
There is a cancellation in this nonlinearity because the multiplier $m^\ell$ of $T^\ell$ is even while the multiplier $\iota_\ell = i \xi_\ell$ for $\nb_\ell$, has the opposite parity, being odd.  In what follows we will skip several details since they are similar to what has already been presented.

Suppose that $\th = \Th_\la$ has frequency support in $|\xi_\la|$ and let $m_\la$ be a localized version of $m$ with frequency support in $|\xi| \sim \la$.  We can perform frequency space calculations that are analogous to those of Section~\ref{sec:deriveBilinear} (in particular the derivation of \eqref{eq: hat Q formula}) to obtain
\ALI{
m_\la^\ell(\xi - \eta) \iota_\ell(\eta) &+ m_\la^\ell(\eta) \iota_\ell(\xi - \eta) = - m_\la^\ell(\xi - \eta) \iota_\ell(-\eta) + m_\la^\ell(-\eta) \iota_\ell(\xi - \eta) \\
&= \int_0^1 \fr{d}{d\si} \left( \iota_\ell(\si \xi - \eta) m_\la^\ell((1-\si) \xi - \eta) \right) d\si \\
&= \xi_a \int_0^1 \left( \nb^a \iota_\ell(\si \xi - \eta) m_\la^\ell((1-\si) \xi - \eta) - \iota_\ell(\si \xi - \eta) \nb^a m_\la^\ell((1-\si) \xi - \eta) \right) d\si \\
&= i \xi_a  \hat{K}^{a}_\la(\xi - \eta, \eta).
}
The definition of $\iota_\ell = i \xi_\ell$ gives $\nb^a \iota_\ell = i \de^a_\ell$. We set
\ali{
\widehat{\tilde{K}^a_\la}(\zeta, \eta) &\equiv i \int_0^1 \left( m^a(\si \zeta - (1-\si) \eta ) - (\si \zeta_\ell - (1-\si) \eta_\ell) \nb^a m^\ell(\si \zeta - (1-\si) \eta) \right) d\si \chi_\la (\zeta) \chi_\la(-\eta), \notag
}
where $\chi_\la$ is the frequency cutoff adapted to $|\xi - \la \bar{\xi}| \leq \la/5 $ that is the symbol for the $P_\la$ in our usual definition of 
\ALI{
\Th_\la = \la^{1/2 - \de} P_\la[e^{i \la \bar{\xi} \cdot x} \la^{d/4} \phi(\la^{1/2} x)].
}
We then have
\ALI{
\widehat{\tilde{K}^a_\la}(\la \bar{\xi}, -\la \bar{\xi}) = 
\hat{K}^a_\la(\la \bar{\xi}, -\la \bar{\xi}) &= i m^a(\la \bar{\xi}) -i \la \bar{\xi}_\ell \nb^a m^\ell( \la \bar{\xi}) =i  \la^{-1 + 2\de} ( m^a(\bar{\xi}) - \bar{\xi}_\ell \nb^a m^\ell(\bar{\xi}))  \\
\widehat{\tilde{K}^a_\la}(\la \bar{\xi}, -\la \bar{\xi}) &= \la^{-1 + 2\de}( m^a(\bar{\xi}) - \bar{\xi}_\ell \nb^a m^\ell(\bar{\xi})),
}
and using this expression we obtain for any test function $\psi$ 
\ALI{
B(\psi, \Th_\la + \bar{\Th}_\la) &= \int \psi (T^\ell \Th_\la \, \bar{\Th}_\la + T^\ell \bar{\Th}_\la \, \Th_\la) dx + o(1) \\
&= \int \psi \la^{1-2\de} \la^{d/2} \nb_a \int \tilde{K}_\la^a(h_1, h_2) e^{i \la \bar{\xi} \cdot( h_1 - h_2)} \varphi(x -h_1) \varphi(x - h_2) dh_1 dh_2 dx + o(1) \\
&= - \int \nb_a \psi \la^{1-2\de} \widehat{\tilde{K}^a_\la}(\la \bar{\xi}, -\la \bar{\xi}) \la^{d/2} \varphi^2(\la^{1/2}(x)) dx + o(1) \\
&= - \nb_a \psi(0) ( m^a(\bar{\xi}) - \bar{\xi}_\ell \nb^a m^\ell(\bar{\xi})) + o(1).
}
As in the previous arguments, we can 
conclude that the nonlinearity has no bounded extension to $C_c^\infty \times \dot{H}^s \times \dot{H}^s$ for $s < -1/2 + \de$ if there exists any $\bar{\xi} \in S^1$ that makes the above expression nonzero.  Thus \eqref{eq:specialBound}, which implies boundedness on $\dot{W}^{2,\infty} \times \dot{H}^{-1+\de} \times \dot{H}^{-1+\de}$, here implies
\ALI{
m^a(\bar{\xi}) - \bar{\xi}_\ell \nb^a m^\ell(\bar{\xi}) = 2m^a(\bar{\xi}) - \nb^a(\bar{\xi}_\ell m^\ell(\bar{\xi})) = 0, \qquad \mbox{ for all } \bar{\xi} \in S^1.
}
This equality implies that $m$ is the gradient of $F(\xi ) = \fr{1}{2} \xi \cdot m(\xi)$.  In particular, $m = \nb F$ implies $F(\xi) = \fr{1}{2} \xi \cdot \nb F$, which tells us that $F$ must be homogeneous of degree two by Euler's theorem on homogeneous functions, and hence $m = \nb F$ must be homogeneous of degree one.

At this point we are able to conclude that the special estimate \eqref{eq:specialBound} implies that the velocity field $T^\ell$ must be a scalar multiple of the mSQG Biot-Savart law unless the multiplier is degree $1$ homogeneous, the velocity field is compressible, and the even part of the multiplier is the gradient of an (odd) homogeneous function of degree $2$.  We do not know if the latter class contains further examples of nonlinearities satisfying \eqref{eq:specialBound}.  However, our methods can be extended to show that every such multiplier is bounded on $\dot{W}^{2,\infty} \times \dot{H}^{-1+\de}\times \dot{H}^{-1+\de}$ by deriving a double-divergence form for the nonlinearity using a second order Taylor expansion in frequency.





\appendix

\section{Homogeneous function spaces}\label{sec:functionSpaces}
In this section we gather relevant facts about the homogeneous function spaces employed in this work.  Since multiple definitions of homogeneous Sobolev and Besov spaces exist in the literature, the precise definitions are important.  We focus more on the case of $\R^2$ since the case of $\T^2$ is simpler but similar.

\subsection{The space \texorpdfstring{$\dot{W}^{2,\infty}$}{for the test function}}  
We define the spaces
\ali{
\dot{C}_0^2 &= \{ \psi \in C^2(\R^d) ~:~ \| \nb^2 \psi \|_{L^\infty} < \infty, |\nb^2 \psi(x)| = o(1) \mbox{ as } |x| \to \infty \} \\
\dot{W}_0^{2,\infty} &= \{ \Psi \in \DD'(\R^d) ~:~ \| \nb^2 \Psi \|_{L^\infty} < \infty, |\nb^2 \psi(x)| = o(1) \mbox{ as } |x| \to \infty \} \\
\dot{W}^{2,\infty} &= \{ \Psi \in \DD'(\R^d) ~:~ \| \nb^2 \Psi \|_{L^\infty} < \infty \} 
}
We equip them with the semi-norm $[ \Psi ]_{\dot{W}^{2,\infty}} = \| \nb^2 \Psi \|_{L^\infty}$.  Each space fails to be Hausdorff (unless one considers the Hausdorff quotient), since linear polynomials have semi-norm equal to $0$.  

{\bf The space $\dot{C}_0^2$ is the closure of $C_c^\infty$ in the semi-norm $\| \nb^2 \Psi \|_{L^\infty}$.}  That is, if $\nb^2 \psi_n \to \nb^2 \Psi$ uniformly, then $\nb^2 \Psi$ is continuous, bounded, and satisfies $|\nb^2 \Psi(x)| = o(1)$ as $|x| \to \infty$.  Conversely, let $\Psi \in \dot{C}_0^2$ and define $\psi_n(x) = \phi_n(x)  \eta_{1/n} \ast \Psi$, where $\phi_n(x) = \phi(x/n)$ is a bump function equal to $1$ on $|x| \leq n$, and $\eta_\ep(x) = \ep^{-2} \eta(x/\ep)$ is a standard mollifying kernel that is even, non-negative, and supported in $|x| < \ep$.  Since $|\nb^k \phi_n(x)| \lesssim_k |x|^{-k}$, to check $\| \nb^2 (\Psi - \psi_n)\|_{L^\infty} \to 0$ as $n \to \infty$ it suffices to show
\ali{
|\nb^2 \Psi(x)| = o(1), \qquad |\nb \Psi(x)| = o(|x|), \qquad |\Psi(x)| = o(|x|^2), \qquad \mbox{as } |x| \to \infty.  \label{eq:littleOh2}
}
The first of these is our assumption $\Psi \in \dot{C}^2_0$, and it implies the latter two.  For example, to prove the last one, use Taylor's theorem with remainder to write
\ALI{
\Psi(x) = \Psi(0) + x^a \nb_a \Psi(0) + x^a x^b \int_0^1 \nb_a \nb_b \Psi( \si x ) (1-\si) d\si 
}
The first two terms are clearly $o(|x|^2)$.  To check that the last one is, let $\ep > 0$, $0 < \de < 1$ be given, and choose $R_0 > 0$ so that $|\nb^2 \Psi(y)| < \ep$ for all $|y| > \de R_0$.  Then for all $|x| \geq R_0$, the integrand is bounded by $\max \{ \| \nb^2 \Psi \|_{L^\infty}, \ep \cdot 1_{\si \geq \de} \}$, which means that the integral itself is $o(1)$ as $|x| \to \infty$ and $\Psi$ itself is $o(|x|^2)$ as desired.  The proof that $|\nb \Psi(x)| = o(|x|)$ is similar.

Now suppose $\Psi \in \dot{W}^{2,\infty}$.  Some basic properties of $\Psi$ are that {\bf $\Psi(x)$ and $\nb \Psi(x)$ are Lipschitz continuous and satisfy the bounds $|\Psi(x)| = O(|x|^2), |\nb \Psi(x)| = O(|x|)$ as $|x| \to \infty$.}  Indeed, let $\Psi_\ep = \eta_\ep \ast \Psi$ be a regularization of $\Psi$ with a non-negative, even mollifying kernel $\eta_\ep$.  Non-negativity of the mollifying kernel implies that $|\nb^2 \Psi_\ep(x)| \leq |\nb^2 \Psi (x)|$ pointwise.  Then the Taylor expansion formula $\nb \Psi_\ep(y) - \nb \Psi_\ep(x) = (y^a - x^a) \int_0^1 \nb_a\nb \Psi_\ep((1-\si) x + \si y) d\si$ shows that $\nb \Psi_\ep$ are uniformly Lipschitz.  
For any particular $\ep > 0$ we can subtract a linear polynomial $L_\ep(x)$ from $\Psi_\ep(x)$ so that $\widetilde{\Psi}_\ep(x) \coloneqq \Psi_\ep(x) - L_\ep(x)$ satisfies $\widetilde{\Psi}_\ep(0) = 0$, $\nb \widetilde{\Psi}_\ep(0) = 0$.  Then the sequence $\nb \widetilde{\Psi}_\ep$ is locally uniformly bounded and uniformly Lipschitz.  
The integral formulas $\widetilde{\Psi}_\ep(x) = x^a x^b \int_0^1 \nb_a \nb_b \Psi_\ep(\si x) (1-\si) d\si$ and $\nb \widetilde{\Psi}_\ep(x) = x^a \int_0^1 \nb_a \nb \Psi_\ep(\si x) d\si$ then guarantee that 
\ali{
|\widetilde{\Psi}_\ep(x)| \leq \| \nb^2 \Psi \|_{L^\infty} |x|^2, \qquad |\nb \widetilde{\Psi}_\ep(x)| \leq \| \nb^2 \Psi \|_{L^\infty} |x| \label{eq:growthBoundsPsiep}
}
hold uniformly in $\ep$.  Since $\widetilde{\Psi}_\ep$ and $\nb \widetilde{\Psi}_\ep$ are equi-bounded and Lipschitz locally uniformly in $\ep$ and they converge  uniformly on compact sets by Arzel\`{a}-Ascoli along a subsequence to some $\widetilde{\Psi}$ that satisfies the growth bounds in \eqref{eq:growthBoundsPsiep} and $\nb^2 \widetilde{\Psi} = \nb^2 \Psi$ as a distribution.  Since $\Psi$ and $\widetilde{\Psi}$ differ by a linear polynomial, we have that $|\Psi(x)| = O(|x|^2)$ and $|\nb \Psi(x)| = O(|x|)$, and that $\nb \Psi = \nb \widetilde{\Psi} + C$ is Lipschitz continuous as desired.

{\bf There exists a sequence $\psi_n$ in $C_c^\infty(\R^2)$ such that $\nb^2 \psi_n \rightharpoonup \nb^2 \Psi$ in $L^\infty$ weak-*.}  It suffices to take $\psi_n(x) = \phi(x/n) \widetilde{\Psi}_{1/n}(x)$.  Then for any test function $\phi^{j\ell}(x)$ supported in $|x| < n$, we have that
\ALI{ 
\int \phi^{j\ell}(x) \nb_j \nb_\ell \psi_n(x) dx &= \int \phi^{j\ell}(x) \nb_j \nb_\ell \widetilde{\Psi}_{1/n}(x) dx = \int \eta_{1/n} \ast \phi^{j\ell}(x) \nb_j \nb_\ell \Psi(x) dx 
}
converges to $\int \phi^{j\ell}(x) \nb_j \nb_\ell \Psi(x) dx$ as $n \to \infty$.  The same convergence holds for any $\phi^{j\ell} \in L^1$ by approximation using the bound $\sup_n \| \nb^2 \psi_n \|_{L^\infty} < \infty$, which is a consequence of \eqref{eq:growthBoundsPsiep}.

{\bf When $\Psi \in \dot{W}_0^{2,\infty}$, this sequence has the additional property that $\| \nb^2 \psi_n \|_{L^\infty} \to \| \nb^2 \Psi \|_{L^\infty}$ as $n \to \infty$.}  To see this fact, note that $|\nb^2 \widetilde{\Psi}_\ep(x)| \leq \| \nb^2 \Psi \|_{L^\infty(B_\ep(x))}$ by the support and non-negativity of $\eta_\ep$.  We then obtain $\sup_{\ep \leq 1} |\nb \widetilde{\Psi}_\ep(x)| = o(|x|)$ and $\sup_{\ep \leq 1} |\widetilde{\Psi}_\ep(x)| = o(|x|^2) $ by the same integral formulas used to prove \eqref{eq:littleOh2}.  With these properties, $|\nb^k \phi_n(x)| \lesssim |x|^{-k}$ and $\supp \nb^k \phi_n \subseteq \{ |x| \geq n \}$ for $k = 1, 2$, we can see that $\| \nb^2 (\phi_n(x) \widetilde{\Psi}_{1/n}) \|_{L^\infty}$ converges to $\| \nb^2 \Psi \|_{L^\infty}$ as $n \to \infty$ as desired.

Finally, {\bf the space $\dot{W}^{2,\infty}$ is complete in that every Cauchy sequence converges,} and as a consequence so are $\dot{C}_0^2$ and $\dot{W}_0^{2,\infty}$ since they are closed subspaces.  Namely suppose $\Phi_n$ is Cauchy in $\dot{W}^{2,\infty}$, and let $A_{j\ell} \in L^\infty$ be the strong limit of $\nb_j\nb_\ell \Phi_n$.   The functions $\widetilde{\Phi}_n$ normalized so that $\widetilde{\Phi}_n(0) = 0, \nb \widetilde{\Phi}_n(0) = 0$, and $\nb^2 \widetilde{\Phi}_n = \nb^2 \Phi_n$ are also Cauchy.  Then by the growth bounds similar to \eqref{eq:growthBoundsPsiep}, $\widetilde{\Phi}_n$ and its first derivatives are continuous, uniformly bounded on compact sets, and uniformly Lipschitz, implying the existence of a subsequence $n_k$ such that $\widetilde{\Phi}_{n_k}$ converges in $C^1$ on compact sets to a some $\widetilde{\Phi}_\infty$.  This $\widetilde{\Phi}_\infty$ does not depend on the subsequence; in fact, its weak second derivative is given by $\nb_j\nb_\ell \widetilde{\Phi}_\infty = \lim_{k \to \infty} \nb_j \nb_\ell \widetilde{\Phi}_{n_k} = A_{j\ell}$.  
Now the full sequence must converge to $\widetilde{\Phi}_{\infty}$, for otherwise we would have a $\de > 0$ such that $\| \nb_j\nb_{\ell} \Phi_{n} - A_{j\ell} \|_{L^\infty} = \| \nb_j\nb_{\ell} \Phi_n - \nb_j\nb_{\ell} \widetilde{\Phi}_{\infty} \|_{L^\infty} \geq \de$ for infinitely many $n$, contradicting the definition of $A_{j\ell}$.

\subsection{The space \texorpdfstring{$\dot{H}^{s}$}{for the scalar field}} \label{sec:hdots}

We employ a definition based on Littlewood-Paley theory, since it connects directly to the analysis used here.  For clarity, we focus on Euclidean space and generalize the dimension to $\R^d$.  We will establish among other things density results for $\dot{H}^{-1+\de}(\R^2)$, $\de \geq 0$.

For $s \in \R$ and $\th$ a tempered distribution, define
\ALI{
\| \th \|_{\dot{H}^{s}}^2 \coloneqq \sum_k \left(2^{sk} \| P_k \th \|_{L^2}\right)^2 = \| 2^{sk} \| P_k \th \|_{L^2} \|_{\ell_k^2}^2
}
This norm is equivalent to the norm $\| \| \La^{sk} P_k \th \|_{L^2} \|_{\ell_k^2}$ with $\La = \sqrt{-\De}$.

We define the space $\dot{H}^{s}$ to be the space of tempered distributions with $\| \th \|_{\dot{H}^{s}}^2$ finite, and with the vanishing zero mode property that $\lim_{m\to \infty} P_{\leq -m} \th = 0$ weakly in $\SS'(\R^d)$.  (For periodic $\th$, $\lim_{m\to \infty} P_{\leq -m} \th = \dashint_{\T^d} \th(x) dx$ is the mean value of $\th$, so we are assuming $\th$ has mean zero in that case.)  This additional condition makes $\| \cdot \|_{\dot{H}^{s}}$ a norm rather than a semi-norm, since the series expansion $\th = P_{\leq -m} \th + \sum_{k \geq -m} P_k \th$ always converges weakly in $\SS'$ for $m$ fixed.  It also guarantees that $\SS_0$, the space of Schwartz functions with compact Fourier support away from $0$, is dense in $\dot{H}^{s}$.  

In particular, in contrast to other definitions (e.g. \cite{grafakos2009modern}) we exclude constants and polynomials, whose Fourier transforms are linear cominations of the delta function at the origin and its derivatives, as they would have zero norm.  

A common alternative definition (e.g. \cite{bahouri2011fourier}) is to require that $\widehat{\th} \in L_{loc}^1(\widehat{\R^d})$ and $\int |\xi|^{2s} |\widehat{\th}(\xi)|^2 d\xi < \infty$, which always implies membership in the space $\dot{H}^s$ defined above.  Conversely, when $s < d/2$ and $\th \in \dot{H}^s$, it is true that $\widehat{\th} \in L_{loc}^1(\widehat{\R^d})$, and the two definitions of $\dot{H}^s$ are equivalent.  Namely, local integrability of $\widehat{\th}$ away from $0$ is true for any $s$ by Plancharel, while for $s < d/2$ the sequence $\widehat{P_{\leq 0} \th} - \widehat{P_{\leq -m} \th}$ is Cauchy in $L^1(\widehat{\R^d})$ as $m \to \infty$, and converges weakly in $\SS'(\widehat{\R^d})$ to $\widehat{P_{\leq 0} \th}$, implying local integrability of $\hat{\th}$ near $0$.  In fact, for $s < d/2$, the $\dot{H}^s$ defined by either condition is complete (as explained below).

We will use often the following inequality, valid for Schwartz $\psi$ on $\R^d$
\ali{
\| P_{\approx k} \psi \|_{L^2(\R^n)} &\lesssim_L \min \{ 2^{(d/2)k} \| \psi \|_{L^1(\R^n)}, 2^{-L k} \| \nb^L P_{\approx k} \psi \|_{L^2(\R^n)} \}. \label{eq:keyTestIneq}
}
The $L^1$ control is useful at low frequencies, while the control of $\nb^L \psi$ in $L^2$ will be used at high frequency.  (On the torus, we use instead that $P_k \psi = 0$ for sufficiently large, negative $k$ and the analysis is simpler, though the properties of $\dot{H}^s$ are slightly different.)

\paragraph{The regime $s > -d/2$}  We now consider $s > -d/2$ and consider the special case of $\dot{H}^{-1}(\R^2)$ at the end.  The following properties hold for $s = -d/2 + \de$ where $\de > 0$.  Writing $\de = z + \bar{\de}$ with $z \geq 0$ an integer and $0 < \bar{\de} \leq 1$, we have the interpolation estimate
\ali{
\| \th \|_{\dot{H}^{-d/2 + z + \bar{\de}}} &\lesssim \| \nb^{z+1} \th \|_{L^2}^\a \| \nb^{z} \th \|_{L^1}^{1-\a}, \qquad \a = 2\bar{\de}/(2+d), \qquad \th \in \SS(\R^d). \label{eq:interpBound}
}
A standard approach to proving bounds such as \eqref{eq:interpBound} is to bound
\ali{
\| \th \|_{\dot{H}^{-d/2 + z + \bar{\de}}}^2 \lesssim \sum_{k \leq \bar{k}} \left(2^{(-d/2 + \bar{\de})k} \| \nb^z P_k \th \|_{L^2} \right)^2 +\sum_{k \geq \bar{k}} \left(2^{(- d/2 + \bar{\de} - 1) \bar{k}} \| \nb^{z+1} P_k \th \|_{L^2} \right)^2. \label{eq:lowAndHighBounds}
}
Applying \eqref{eq:keyTestIneq} for $\psi = \nb^z \th$ at low frequency, using the almost orthogonality of Littlewood-Paley projections for the high frequency sum, and optimizing over $\bar{k}$ yields the result.  

Every Schwartz function satisfies the vanishing condition $P_{\leq -m} \psi \rightharpoonup 0$ in $\SS'$ and belongs to $\dot{H}^{-d/2 + \de}$ by \eqref{eq:interpBound} or \eqref{eq:lowAndHighBounds}.  From this fact and \eqref{eq:interpBound} or \eqref{eq:lowAndHighBounds} we obtain the density of $C_c^\infty$ in $\dot{H}^{-d/2+\de}$.


\paragraph{The regime $s < d/2$}  Now assuming $s < d/2$, two important properties of $\dot{H}^s$ are that it is complete, and that $\dot{H}^s$ convergence implies weak convergence in $\SS'$ (for our specific definition of $\dot{H}^s$).  To see the latter claim, let $\psi$ be Schwartz and $\th \in \dot{H}^s$.  We use the vanishing zero mode condition to obtain 
\ALI{
\langle \th, \psi\rangle = \lim_{m \to \infty} \left[\langle \th, P_{\leq - m} \psi \rangle + \sum_{k = -m}^\infty \langle P_k \th, \psi \rangle \right] = \lim_{m \to \infty} \sum_{k = -m}^\infty \langle P_k \th, P_{\approx k} \psi \rangle_{L^2}
}
Using Cauchy-Schwartz, we bound this sum by $\sum_k (2^{sk} \| P_k \th \|_{L^2}) (2^{-sk} \|P_{\approx k} \psi \|_{L^2}) \lesssim \| \th \|_{\dot{H}^s} \| \psi \|_{\dot{H}^{-s}} $.  But $-s > -d/2$, so \eqref{eq:interpBound} with $\th$ replaced by $\psi$ implies $\| \psi \|_{\dot{H}^{-s}} < \infty$.  Thus the series converges absolutely and the functional $\th \mapsto \langle \th, \psi \rangle$ is bounded on $\dot{H}^s$, so that $\dot{H}^s$ convergence implies ${\cal S}'$ convergence.

We now sketch a proof of completeness.  If $\th_n$ is Cauchy in $\dot{H}^{s}$, then $H \equiv \sup_n \| \th_n \|_{\dot{H}^s}$ is bounded.  Also, the sequences $P_k \th_n$ are Cauchy in $L^2$ for every fixed $k$ and therefore converge strongly in $L^2$ to functions $\Th_k$ that satisfy $P_{\approx k} \Th_k = \Th_k$ and $\| 2^{sk} \|\Th_k \|_{L^2} \|_{\ell_k^2} \leq H < \infty$ by Fatou's Lemma.   

We can define $\langle \Th, \psi \rangle \coloneqq \lim_{M \to \infty} \int \th^{(M)}(x) \psi(x) dx$ as a linear function on the Schwartz space by checking that the latter sequence is Cauchy in $M$.  Apply Cauchy-Schwartz to bound 
\ALI{
\Big|\int (\th^{(M+\ell)} - \th^{(M)}) \psi(x) dx\Big| &\leq \sum_{M < |k| \leq M + \ell} \left|\int \Th_k P_{\approx k} \psi dx \right| \\
\leq \sum_{M < |k| \leq M + \ell}\left(2^{-sk} \| P_{\approx k} \psi \|_{L^2} \right) \left( 2^{sk} \| \Th_k \|_{L^2} \right) &\lesssim \Big( \sum_{M-1 < |k|} (2^{-sk} \| P_k \psi \|_{L^2})^2 \Big)^{1/2} \| 2^{sk} \| \Th_k \|_{L^2}\|_{\ell_k^2}.
}
We have already noted that $\| 2^{sk} \| \Th_k \|_{L^2}\|_{\ell_k^2} \leq H < \infty$.  Since $-s > -d/2$, we have that $\| \psi \|_{\dot{H}^{-s}} < \infty$.  By dominated convergence, the final bound goes to $0$ as $M$ gets large, and hence the sequence in $M$ defining $\langle \Th, \psi \rangle$ is Cauchy.  Similarly one obtains $|\langle \Th, \psi \rangle| \lesssim \| \psi \|_{\dot{H}^{-s}} H$, $-s > -d/2$, making $\Th$ a tempered distribution that is the weak limit of the $\th^{(M)}$ by \eqref{eq:interpBound}.  

We likewise obtain using \eqref{eq:keyTestIneq} again that
\ALI{
|\langle P_{\leq -m} \Th, \psi \rangle| = \lim_M |\langle \th^{(M)}, P_{\leq -m} \psi \rangle | \lesssim 2^{-(d/2-s)m} \| \psi \|_{L^1} H,
}
which implies the vanishing zero mode condition for $s < d/2$.  Furthermore, $P_k \Th = \Th_k = \lim_{n \to \infty} P_k \th_n$ (strong $L^2$ limit), since $P_k \th^{(M)}$ is independent of $M$ for $M > |k|$.  In particular, $\Th \in \dot{H}^s$ with $\| \Th \|_{\dot{H}^s} \leq H$.  Finally, the inequality
\ALI{
\left| 2^{sk} \| P_k \th_n - P_k \Th \|_{L^2} - 2^{sk} \| P_k \th_m - P_k \Th \|_{L^2}\right| \leq 2^{sk} \| P_k (\th_n - \th_m) \|_{L^2}
} 
implies that $2^{sk} \| P_k \th_n - P_k \Th \|_{L^2}$, regarded as a sequence with values in $\ell^2_k$, is Cauchy in $n$, so it converges as $n \to \infty$ in $\ell_k^2$ to its pointwise limit, which is the $0$ element of $\ell_k^2$.  That is, $\| 2^{sk} \| P_k \th_n - P_k \Th \|_{L^2} \|_{\ell_k^2} \to 0$ as $n \to \infty$, and hence $\th_n \to \Th$ in $\dot{H}^s$.

\paragraph{The space $\dot{H}^{-1}(\R^2)$}

Finally, we address the case of $\dot{H}^{-1}(\R^2)$ that is of relevance to 2D Euler.  In general, a distribution $\th \in \DD'(\R^d)$ belongs to $\dot{H}^{-1}(\R^d)$ if and only if $\th = \nb_i u^i$ is the divergence (in the weak sense) of a vector field $u \in L^2$.  That is, if $\th = \nb_i u^i$, the vanishing zero mode condition follows from $\| P_{\leq -m} \th \|_{L^\infty} \leq \| \nb \eta_{\leq -m} \|_{L^2} \| u\|_{L^2} \lesssim 2^{-(d/2 + 1) m} \| u \|_{L^2}$, while the bound
\ali{
\| \th \|_{\dot{H}^{-1}}^2 &\lesssim \sum_k \left(2^{-k} \| \nb_i P_k u^i \|_{L^2}\right)^2  \lesssim \| u \|^2_{L^2}. \label{eq:H-1adivbd}
}
implies $\| \th \|_{\dot{H}^{-1}} < \infty$.  Conversely, for $\th \in \dot{H}^{-1}$, the sequence of vector fields $u_{(m)}^i = \De^{-1} \nb^i(1 - P_{\leq -m})\th$ is Cauchy in $L^2$ as $m \to \infty$, and its limit $u$ satisfies $\nb_i u^i = \th$ by the vanishing $0$ mode condition.

In dimension $2$, it is no longer true that every $\th \in C_c^\infty(\R^2)$ belongs to $\dot{H}^{-1}(\R^2)$.  Rather, if $\th \in L^1(\R^2)$ has a nonzero integral, then $\th$ cannot belong to $\dot{H}^{-1}(\R^2)$ since then $\int \th(x) dx = \widehat{\th}(0) \neq 0$ implies that $\| \widehat{P_k \th} \|_{L^2(\widehat{\R^2})}^2 \sim 2^{2k}$ for large $k < 0$, and $\| \th \|_{\dot{H}^{-1}}$ is infinite.  

For Schwartz functions $\th$, the vanishing of $\int \th(x) dx = 0$ then provides a necessary and sufficient condition to belong to $\dot{H}^{-1}$, for if this condition holds we can represent $\th = \nb_a u^a$ as the divergence of a Schwartz vector field $u$.  The existence of a Schwartz anti-divergence is simple when the support of $\widehat{\th}$ is bounded away from the origin, for in that case the Helmholtz solution $u_{\infty}^a = \De^{-1} \nb^a \th$ is Schwartz and provides a suitable anti-divergence.  For more general Schwartz functions $\th$ with $\int \th(x) dx = \widehat{\th}(0) = 0$, we can Taylor expand at the origin to obtain $\widehat{\th}(\xi) = i \xi_a \hat{u}_0^a$ for a smooth $\hat{u}_0$ that may have poor decay at high frequency.  Gluing $\hat{u}_0$ and $\hat{u}_{\infty}$ with a smooth cutoff in frequency yields a Schwartz vector field $u^a$ that solves $\widehat{\th}(\xi) = (i \xi_a) \hat{u}^a = \widehat{\nb_a u^a}(\xi)$ and is thus a Schwartz anti-divergence for $\th$.

As a corollary, we have that the space of compactly supported test functions $\th \in C_c^\infty$ with $\int \th(x) dx = 0$ is a dense subspace of $\dot{H}^{-1}$, for the space of Schwartz functions with frequency support away from the origin is already dense in $\dot{H}^{-1}$, and we can obtain any such $\th = \nb_a u^a$, $u \in L^2$, as the limit of the compactly supported sequence $\th_n(x) = \nb_a( \phi(x/n) \eta_n \ast  u^a )$, where $\phi$ as before is a bump function and $\eta_n$ is a suitable mollifier, which converges in $\dot{H}^{-1}$ by \eqref{eq:H-1adivbd}.

\bibliographystyle{abbrv}
\bibliography{eulerOnRn}

\end{document}